\documentclass[twoside,a4paper,12pt]{amsart}

\usepackage{amsfonts, amsbsy, amsmath, amssymb, latexsym}
\usepackage{mathrsfs,array,enumitem}
\usepackage[top=30mm,right=30mm,bottom=30mm,left=30mm]{geometry}
\usepackage{stmaryrd}
\usepackage{bm}
\usepackage{bbding}
\usepackage{pdflscape}
\usepackage{caption,amsmath,amssymb,amsfonts,amsthm,latexsym,graphicx,multirow,color,hyperref,enumerate}
\usepackage[all]{xy}
\usepackage{xcolor}
\usepackage{mathrsfs}
\usepackage{amsmath}
\usepackage{latexsym}
\usepackage{amssymb}
\usepackage{amsfonts}
\usepackage{longtable}
\usepackage{colortbl}
\usepackage{graphics}
\usepackage{booktabs}
\usepackage{setspace}
\usepackage[normalem]{ulem}

\def\ov{\overline} 
\def\l{\langle} \def\r{\rangle} 

\def\div{\,\,\big|\,\,}  
\def\FF{\mathbb F}
\def\ZZ{\mathbb Z} \def\Z{{\rm Z}}
 
 \def\CC{{\mathcal C}}

\def\mod{{\sf mod~}}     
\def\Aut{{\sf Aut}}   \def\Out{{\sf Out}}

  \def\soc{{\sf soc}} \def\AG{{\rm AG}}
   
   \def\D{{\rm D}} \def\Q{{\rm Q}} \def\S{{\rm S}} 
\def\G{{\rm G}}  
\def\M{{\rm M}}

 \def\HS{{\rm HS}}
 
\def\N{{\bf N}}
\def\Z{{\bf Z}} 
\def\calO{{\mathcal O}}

\def\a{\alpha} \def\b{\beta}
\def\d{\delta} \def\g{\gamma}

  \def\GL{{\rm GL}} \def\PG{{\rm PG}}

\def\Sp{{\rm Sp}} 
\def\PSp{{\rm PSp}}

\def\GammaL{{\rm \Gamma L}}
\def\AGammaL{{\rm A\Gamma L}}
\def\PGammaL{{\rm P\Gamma L}}
\def\PSigmaL{{\rm P\Sigma L}}

\def\PGammaU{{\rm P\Gamma U}}

\def\A{{\rm A}}  
 \def\PSL{{\rm PSL}}  \def\PGL{{\rm PGL}}
\def\GL{{\rm GL}} \def\SL{{\rm SL}}
\def\AGL{{\rm AGL}}  \def\PSU{{\rm PSU}} \def\SU{{\rm SU}}
\def\GU{{\rm GU}} \def\GO{{\rm GO}}   \def\PGU{{\rm PGU}} \def\SU{{\rm SU}}
\def\Sz{{\rm Sz}} \def\McL{{\rm McL}}    \def\E{{\rm E}} 
 \def\D{{\rm D}} 
\def\G{{\rm G}}   \def\Co{{\rm Co}}  \def\HS{{\rm HS}}
\def\Ree{{\rm Ree}}

 \def\calM{{\mathcal M}}
 \def\soc{{\rm soc}}
\def\vs{\vskip0.05in}
\def\calM{{\mathcal M}}
\def\calP{{\mathcal P}}
\def\calB{{\mathcal B}}
\def\calD{{\mathcal D}}
\def\calI{{\mathcal I}}

\def\calH{{\mathcal H}}
\def\le{\leqslant}

\def\leq{\leqslant}
\def\geq{\geqslant}
\def\scrS{{\mathscr S}}

\def\ASL{{\rm ASL}} 
\def\0{{\bf 0}} \def\MA{{\textsc{Magma}}}

\newtheorem*{lem}{Lemma}
\newtheorem{theorem}{Theorem}[section]%
\newtheorem{lemma}[theorem]{Lemma}%
\newtheorem{proposition}[theorem]{Proposition}%
\newtheorem{definition}[theorem]{Definition}%

\newtheorem{example}[theorem]{Example}%
\newtheorem{construction}[theorem]{Construction}%
\newtheorem*{lemWK}{Lemma (Wagner, Kantor)}
\newtheorem{hypothesis}[theorem]{Hypothesis}%

\def\qed{{\hfill$\Box$\smallskip}
	\medbreak}

\begin{document}
	
\title[Locally 2-homogeneous block designs]{Locally 2-homogeneous block designs}
\thanks{The project was partially supported by the NNSF of China (11931005, 12526634).}

\author{Jianfu Chen}
\address{School of Mathematics and Computational Science\\
		Wuyi University, Jiangmen 529030,  P.R.China}
\email{chenjf@wyu.edu.cn; jmchenjianfu@126.com}
	
\author{Peice Hua}
\address{
SUSTech International Center for Mathematics\\
Southern University of Science and Technology,  Shenzhen 518055,  P.R.China}
\email{huapc@pku.edu.cn}
	
\author{Cai Heng Li}
\address{Department of Mathematics \\
SUSTech International Center for Mathematics\\
Southern University of Science and Technology,  Shenzhen 518055,  P.R.China}
\email{lich@sustech.edu.cn}
	
\author{Yanni Wu}
\address{Department of Mathematics \\
Southern University of Science and Technology,  Shenzhen 518055,  P.R.China}
\email{12031209@mail.sustech.edu.cn}
	
\begin{abstract}
This paper presents a classification of locally $2$-homogeneous designs,
extending Kantor's classification of 2-transitive symmetric designs (1985).\vskip0.07in

\noindent {\bf Keywords:} block design; 2-design; 2-homogeneous; locally primitive graph\vskip0.02in
		
\end{abstract}

\maketitle
\date\today
	
\section{Introduction}
A {\it balanced incomplete block design} (BIBD) is a point-block incidence geometry $ \calD=(\calP,\calB,\calI) $ where $\calP$ is a set of $ v $ points, $\calB$ is a set of $ b $ blocks, and $\calI\subseteq\calP\times\calB$ is an incidence relation such that\,:
\begin{itemize}
	\item each block is incident with exactly $k<v$ points, and \vskip0.02in
	\item any $ 2 $ distinct points are incident with exactly $\lambda>0$ common blocks.
\end{itemize}

It is known that each point of $ \calD $ is incident with the same number, denoted by $ r $, of blocks. Throughout this paper, we shall always denote the above parameters by $(v,b,r,k,\lambda)$. Depending on the context, a BIBD is also referred to as a {\em $ 2 $-$ (v,k,\lambda) $ design}, a {\em $ 2 $-design}, or simply a \textit{design}.\vs

In this paper, we only consider {\em simple designs} where distinct blocks have distinct sets of incident points, and thus $\calB\subseteq\calP^{\{k\}}$. We further assume that $ \calD $ is {\it non-trivial}, that is, $\calB\ne\calP^{\{k\}}$. A {\em flag} of $\mathcal{D}$ is an incident point-block pair. By Fisher's inequality, $ v\leqslant b $. In the case $ v=b $, we call $\calD$ a {\it symmetric design}.

An \textit{automorphism} of $ \calD $ is a permutation on points that preserves the blocks and also the incidence relation. Denote by $\Aut(\mathcal{D})$ the group of automorphisms of $\calD$. We call $ \calD $ a {\it $G$-flag-transitive} design if $ G\leqslant\Aut(\mathcal{D}) $ acts transitively on the set of flags of $ \calD $. The following is a long-standing open problem in design theory.\vs

\noindent {\bf Problem A.} Classify flag-transitive designs.\vs

It would be difficult to solve Problem A in full generality.
Many research have considered special parameter conditions. For example, flag-transitive designs with $ \lambda=1 $ or $ (\lambda,r)=1 $ have been well-characterized, respectively, in \cite{Buekenhout2vk1,KantorProplane} and \cite{Alavirlam1,Zies1988}. On the other hand, many others have considered stronger symmetry conditions. A design is said to be \textit{$ 2 $-transitive} if it has an automorphism group acting 2-transitively on the points. One classic result is Kantor's classification of 2-transitive symmetric designs \cite{Kantor2tr} in 1985. Later, designs with automorphism groups being primitive of rank 3 on the points were studied in \cite{rank3as,rank3ha}.\vs

Our interest in this problem stems from the following observation\,: there is a close connection between local symmetry and global symmetry of designs. 
In the following, denote by $ \a $, $ \b $, respectively, a point and a block of $ \calD $, and by $\calD(\a)$ the set of blocks incident with $\a$, by $\calD(\b)$ the set of points incident with $\b$. By the simplicity assumption, $ \b $ is identified with $ \calD(\b) $. 
	
\begin{definition}
{\rm
Let $\calD=(\calP,\calB,\calI)$ be a design and let $G\le\Aut(\calD)$. Then $\calD$ is said to be {\em $G$-locally transitive}, if for each element $\g\in\calP\cup\calB$, the stabilizer $G_\g$ is transitive on the set $\calD(\g)$. In a similar manner, one defines {\em $ G $-locally primitive, $ G $-locally $2$-homogeneous, and $ G $-locally $2$-transitive designs}. Furthermore, when such a group $G$ exists but is not specified, we omit the notation $G$.}
\end{definition}
One can easily show that a design is locally-transitive if and only if it is flag-transitive. Further, a locally 2-transitive design is point 2-transitive (Lemma \ref{loc-2-trans}), and, conversely, a point 2-transitive symmetric design (with exactly one exception) is locally 2-transitive (Proposition \ref{sym}).

Our main result, as shown below, extends Kantor's classification of 2-transitive symmetric designs to a classification of locally 2-transitive designs. Indeed, we do something slightly stronger. We classify locally 2-homogeneous designs. Unsurprisingly, all the designs are known, while most of them have a clear geometric background. The infinite families {\bf(1.1)}\,-\,{\bf(1.5)} are constructed in Section \ref{sec-exam}. Among them, the first two families are symmetric. Meanwhile, the sporadic cases in the tables are verified by $ \MA $. We note that this work continues our previous work on locally primitive designs \cite{locallyprimitive}. 

\begin{theorem}\label{thm:2-trans-Design}
Let $\calD=(\calP,\calB,\calI)$ be a $ G $-locally $ 2 $-homogeneous design. Then $ G $ is $2$-homogeneous on $\calP$, and either {\bf(1)} or {\bf(2)} occurs.\vs

\noindent {\bf(1)} $G$ is locally $2$-transitive on $\calD$ and $2$-transitive on $\calP$. One of the following holds{\rm\,:}
\begin{itemize}
\item[\bf(1.1)] $\calD=\PG(d-1,q)$ or $\ov{\PG(d-1,q)}$, and $\PSL_d(q)\lhd G \leqslant \PGammaL_d(q)${\rm\,;}
\vskip0.03in
\item[\bf(1.2)] $\calD=\scrS^{\pm}(2m)$, and $G=2^{2m}{:}\Sp_{2m}(2)${\rm\,;}
\vskip0.03in
\item[\bf(1.3)] $\calD=\AG_1(d,q)$ or $\AG_{d-1}(d,q)$, and $\ASL_d(q) \lhd G \leqslant \AGammaL_d(q)${\rm\,;}
\vskip0.03in
\item[\bf(1.4)] $\calD=\PG_1(d-1,q)$ with $d>3$, and $\PSL_d(q) \lhd G \leqslant \PGammaL_d(q)${\rm\,;}
\vskip0.03in
\item[\bf(1.5)] $\calD=U_H(q)$, $\PSU_3(q) \lhd G\leqslant \PGammaU_3(q)$ and $ \PGU_3(q)\leq G\l\phi^2\r $, where $\phi$ is the field automorphism{\rm\,;}
$  $\vskip0.03in
\item[\bf(1.6)] $ \calD $ and $ (G,G_\a,G_{\a\b},G_\b) $ are listed in {\rm Tables~\ref{table_sym}, \ref{table_nsym}}, with $(\a,\b)\in\calP\times\calB$ a flag.
\end{itemize}\vs

\noindent {\bf(2)} $G$ is not locally $2$-transitive on $\calD$. In this case, $ \calD $ and $ (G,G_\a,G_{\a\b},G_\b) $ are listed in {\rm Table~\ref{table_not2trans}}, with $(\a,\b)\in\calP\times\calB$ a flag.

\begin{table}[ht]
\tiny
\renewcommand\arraystretch{1.5}
\caption{\small Sporadic $ G $-locally 2-transitive designs (symmetric)}\label{table_sym}
\centering
\begin{tabular}{ccccccc}
\toprule[1pt]
$v$ & $k$ & $\lambda$ & $G$ & $G_{\a} \cong G_{\b}$ & $G_{\a \b}$ & {\rm Remark} \\
\midrule[1pt]

$11$ & $5$ & $2$ & $\PSL_2(11)$ & $\A_5$ & $\A_4$  \\
& $6$ & $3$ & &  & $\D_{10}$  \\ 
\hline

$15$ & $7$ & $3$ & $\A_7$ & $\PSL_2(7)$ & $\S_4$ & $\calD=\PG(3,2)$\\
& $8$ & $4$ & & & $7{:}3$ & $\calD=\overline{\PG(3,2)}$ \\ 
\hline

$16$ & $6$ & $2$ & $2^4{:}\A_6$ & $\A_6$ & $\A_5$ & $\calD=\scrS^{-}(4)$ \\
& $10$ & $6$ & & & $3^2{:}4$ & $\calD=\scrS^{+}(4)$\\ 
\hline
		
$64$ & $28$ & $12$ & $2^6{:}\PSU_3(3).\calO$ & $\PSU_3(3).\calO$ & $3_+^{1+2}{:}8.\calO$ & $\calO=1,2\,;\calD=\scrS^{-}(6)$\\ 
\hline
		
$176$ & $126$ & $90$ & $\HS$ & $\PSU_3(5){:2}$ & $5_+^{1+2}{:}8{:}2$  \\ 
\bottomrule[1pt]
\end{tabular}
\end{table}

\begin{table}[ht]
\tiny
\renewcommand\arraystretch{1.5}
\caption{\small Sporadic $ G $-locally 2-transitive designs (non-symmetric)}\label{table_nsym}
\centering
\begin{tabular}{cccccccccc}
\toprule[1pt]
$v$ & $b$ & $r$ & $k$ & $\lambda$ & $G$ & $G_{\a}$ & $G_{\a \b}$ & $G_{\b}$ & {\rm Remark} \\
\midrule[1pt]

$12$ & $22$ & $11$ & $6$ & $5$ & $\M_{11}$ & $\PSL_2(11)$ & $\A_5$ & $\A_6$ \\

$15$ & $35$ & $7$ & $3$ & $1$ & $\A_7$ & $\PSL_3(2)$ & $\S_4$ & $(\A_4 \times 3){:}2$ & $\calD=\PG_1(3,2)$\\

$16$ & $20$ & $5$ & $4$ & $1$ & $\AGammaL_1(16)$ & $\GammaL_1(16)$ & $3{:}4$ & $2^2{:}(3{:}4)$ & $\calD=\AG_1(2,4)$\\
$16$ & $30$ & $15$ & $8$ & $7$ & $2^4{:}\A_7$ & $\A_7$ & $\PSL_3(2)$ & $2^3{:}\PSL_3(2)$ & $\calD=\AG_3(4,2)$\\

$21$ & $56$ & $16$ & $6$ & $4$ & $\PSL_3(4).\calO$ & $2^4{:}\A_5.\calO$ & $\A_5.\calO$ & $\A_6.\calO$ & $\calO=1, 2$\\

$22$ & $77$ & $21$ & $6$ & $5$ & $\M_{22}.\calO$ & $\PSL_3(4).\calO$ & $2^4{:}\A_5.\calO$ & $2^4{:}\A_6.\calO$ & $\calO=1, 2$\\
\bottomrule[1pt]
\end{tabular}
\end{table}

\begin{table}[ht]
\tiny
\renewcommand\arraystretch{1.5}
\caption{\small $ G $-locally $2$-homogeneous but not $ G $-locally $2$-transitive designs}\label{table_not2trans}
\centering
\begin{tabular}{cccccccccc}
\toprule[1pt]
$v$ & $b$ & $r$ & $k$ & $\lambda$ & $G$ & $G_{\a}$ & $G_{\a \b}$ & $G_{\b}$ & {\rm Remark} \\
\midrule[1pt]

$7$ & $7$ & $3$ & $3$ & $1$ & $7{:}3$ & $3$ & $1$ & $3$ & $\calD=\PG(2,2)$\\

$8$ & $14$ & $7$ & $4$ & $3$ & $\AGammaL_1(8)$ & $\GammaL_1(8)$ & $3$ & $\A_4$ & $\calD=\AG_2(3,2)$\\

$8$ & $14$ & $7$ & $4$ & $3$ & $\PSL_2(7)$ & $7{:}3$ & $3$ & $\A_4$ &  $\calD=\AG_2(3,2)$\\
\bottomrule[1pt]
\end{tabular}
\end{table}
\end{theorem}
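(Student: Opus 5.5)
The first step is to show that $G$ is $2$-homogeneous on $\calP$. Fix distinct points $\a_1,\a_2$. Since $\lambda>0$, there is a block $\b$ containing both. Local $2$-homogeneity at $\b$ means $G_\b$ is $2$-homogeneous on $\calD(\b)$. Given two unordered pairs $\{\a_1,\a_2\}\subseteq\b$ and $\{\a_1',\a_2'\}\subseteq\b'$, flag-transitivity (local transitivity) gives $g\in G$ with $\b^g=\b'$. Applying $2$-homogeneity of $G_{\b'}$ on $\b'$ then sends $\{\a_1,\a_2\}^g$ to $\{\a_1',\a_2'\}$. Hence $G$ is transitive on $\calP^{\{2\}}$. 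A $2$-homogeneous group that is not $2$-transitive has odd order and is a subgroup of $\AGammaL_1(q)$ with $q\equiv 3\pmod 4$ (Kantor). Likewise, if $G_\b$ is $2$-homogeneous but not $2$-transitive on $\b$, then $G_\b^{\b}$ has odd order and $k\equiv 3\pmod 4$. The same statements hold for $G_\a$ acting on $\calD(\a)$.

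This splits the proof into two parts. For case (2), suppose $G$ is $2$-homogeneous but not locally $2$-transitive. If $G$ itself is not $2$-transitive, then $G\le\AGammaL_1(p^f)$ with $v=p^f\equiv3\pmod4$. Since $G_\a$ is metacyclic of small order, transitivity on the $r$ blocks through $\a$ forces $r\mid |G_\a|$, which is at most $(v-1)f$. Combining this with $r(k-1)=\lambda(v-1)$, $bk=vr$ and the requirement that $G_\b^\b$ be $2$-homogeneous on $k$ points should leave only $\PG(2,2)$ with $G=7{:}3$. If instead $G$ is $2$-transitive but some local action is only $2$-homogeneous, we run through the list of $2$-transitive groups. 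A local action that is $2$-homogeneous but not $2$-transitive is a solvable odd-order subgroup of $\AGammaL_1(k)$ or of $\AGammaL_1(r)$. That is very restrictive, and a direct check on small cases should yield the two $\AG_2(3,2)$ entries.

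For case (1), $G$ is locally $2$-transitive, hence $2$-transitive on $\calP$ by Lemma~\ref{loc-2-trans}. We use the classification of finite $2$-transitive groups and treat the almost simple and affine cases separately. In the almost simple case, $G_\a$ is a known maximal-ish subgroup. We need a $G_\a$-orbit $\calD(\a)$ on blocks whose stabilizer $G_{\a\b}$ has index $r$ in $G_\a$, with $G_\a$ $2$-transitive on the cosets of $G_{\a\b}$. So $G_\a$ must have a $2$-transitive action of degree $r$; such actions are themselves classified. Dually, $G_\b$ acts $2$-transitively on a $k$-subset. The parameter identities $vr=bk$ and $\lambda(v-1)=r(k-1)$, together with $r\mid|G_\a|$, then cut down to a finite list. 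For the families $\PSL_d(q)$ this list gives the projective designs (1.1) and (1.4): the blocks are hyperplanes or lines, since $G_\a$ is $2$-transitive on lines or hyperplanes through $\a$. For $\PSU_3(q)$ it gives the Hermitian unital (1.5), where the constraint $\PGU_3(q)\le G\l\phi^2\r$ comes from needing the point stabilizer to act $2$-transitively on the $q^2$ blocks through $\a$. The sporadic and small groups are settled by \MA, giving Tables~\ref{table_sym} and~\ref{table_nsym}. Symmetric designs may also be cross-checked against Kantor's list via Proposition~\ref{sym}.

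In the affine case $G=V{:}G_0$, every block through $\mathbf{0}$ is determined by its $G_{\0\b}$-structure. The $2$-transitivity of $G_\b$ on $\b$ with $|\b|=k$ suggests that $\b$ is an affine subspace (yielding (1.3)) or a quadric-type set (yielding $\scrS^\pm(2m)$ with $G_0=\Sp_{2m}(2)$, or the exceptional $\A_6$, $\A_7$, $\PSU_3(3)$ cases). The main obstacle is this affine analysis, specifically ruling out blocks that are not subspaces when $G_0$ is large, such as $\SL_d(q)$ or $\Sp_d(q)$. My first attempt would be to show that $G_{\0\b}\le G_0$ acts $2$-transitively on the $r$ blocks through $\0$ and to use subgroup-index bounds to force $\b$ to be $G_\b$-invariant under translations in a subspace. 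I would then treat the remaining small-dimensional cases computationally.
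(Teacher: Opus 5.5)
Your first step (pair-transitivity via a block through both points) is the paper's Lemma~\ref{loc-2-trans} and is fine. After that, the plan lacks the two ideas that actually make the infinite families finite, so the case analysis does not close.

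\textbf{First gap: quasi-symmetry.} You never exploit local $2$-homogeneity at \emph{points}. Since $G$ is point-transitive and $G_\a$ is $2$-homogeneous on $\calD(\a)$, $G$ is transitive on pairs of distinct blocks that meet. So either $\calD$ is symmetric, or it is quasi-symmetric with intersection numbers $0$ and $c$. Counting in the derived structure at $\a$ gives $c=\frac{(k-1)(\lambda-1)}{r-1}+1\in\ZZ$ and $k\le v/2$ (Lemma~\ref{c}).

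Without this, your claim that $vr=bk$, $\lambda(v-1)=r(k-1)$ and $r\mid|G_\a|$ ``cut down to a finite list'' is false. For $\PSU_3(q)$ with $r=q^2$, the values $k=q^2-q+1$, $\lambda=q-1$, $b=q^2(q+1)$ pass these tests for every $q$. It is $q^2-1\mid q\lambda(\lambda-1)$ that forces $\lambda=1$ and hence $U_H(q)$.

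Likewise, your case (2) is not a ``direct check on small cases''. For every $q\equiv 3\pmod 4$, the Borel subgroup $q{:}\frac{q-1}{2}$ of $\PSL_2(q)$ is $2$-homogeneous but not $2$-transitive on $r=q$ blocks, and $\lambda=k-1$ with $k=4$ is arithmetically feasible for all such $q\ge 7$. Among these, only integrality of $c=\frac{6}{q-1}+1$ leaves $q=7$, i.e.\ $\AG_2(3,2)$.

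This is part of a branch you omit for $\PSL_d(q)$ altogether. Besides its actions on lines and hyperplanes through $\a$, the parabolic $G_\a$ has an affine $2$-transitive action of degree $q^{d-1}$ in which the unipotent radical is regular. That branch contains $\ov{\PG(d-1,q)}$ and the $2$-$(21,6,4)$ design of $\PSL_3(4)$. Clearing the rest of it needs the divisibility $k\mid p^i(q-h)$ (forcing $d\le 3$), then integrality of $c$ and the maximal subgroups of $\SL_2(q)$.

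Finally, for $G\le\AGammaL_1(v)$ the bound $|G_\a|\le(v-1)f$ is useless. You need that a $2$-homogeneous image of the metacyclic group $G_\0$ has prime degree $r$ and a point stabilizer of order at least $\frac{r-1}{2}$ dividing $f$. Then $r\le 2f+1$, which contradicts $r^2>\lambda v\ge v=p^f$ for all but small $v$. This must be done for every $G\le\AGammaL_1(v)$, not only the non-$2$-transitive ones, since $\AGammaL_1(8)$ and $\AGammaL_1(16)$ are $2$-transitive and occur.

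\textbf{Second gap: subspace blocks in the affine case.} The missing idea is Lemma~\ref{lem:pty-1}, and your proposed attack on it is misdirected: $G_{\0\b}$ fixes $\b$, and it is $G_\0$ that is $2$-transitive on the $r$ blocks through $\0$. The actual argument uses only local primitivity.

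$G_{\0\b}$ is maximal in both $G_\0$ and $G_\b$, so $G_\b=\langle G_{\0\b},g\rangle$ with $g=\mathbf v h$, $\mathbf v\in V$, $h\in G_\0$. If $h\notin G_{\0\b}$, then $VG_\b=V\langle G_{\0\b},h\rangle=VG_\0=G$, so $b\le v$ and $\calD$ is symmetric. Otherwise $G_\b=\langle\mathbf v^{G_{\0\b}}\rangle{:}G_{\0\b}$, and $\b=\langle\mathbf v^{G_{\0\b}}\rangle$ is an $\FF_p$-subspace, so $k$ is a power of $p$.

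This is why the symmetric case must be split off first and settled by Kantor's theorem, rather than used as a cross-check: it is exactly where non-subspace blocks such as those of $\scrS^\pm(2m)$ occur. Once blocks are subspaces, the $\Sp_{2m}(q)$, $\G_2(q)$ and extraspecial classes collapse quickly. For instance, when $\Sp_{2m}(2)\lhd G_\0$, the group $G_{\0\b}$ contains some $\GO^\pm_{2m}(2)$, which is irreducible on $V$ and so cannot fix the proper subspace $\b$. In the exceptional classes, ``$k$ is a $p$-power'' together with $\lambda(v-1)=r(k-1)$ and $r^2>v$ does most of the work, with a short computation for the $\A_6$ and $\A_7$ cases, leaving $\AG_3(4,2)$.
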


\noindent {\bf Remark on Tables \ref{table_sym}\,-\,\ref{table_not2trans}.} In these tables, several designs, as indicated, have already appeared in part {\bf(1)}. The tables indeed provide new groups $ G $ being locally $ 2 $-transitive/$ 2 $-homogeneous on them. In addition, a few designs are in fact \textit{$ 3 $-designs} (defined in Section \ref{sec-preli}). As a consequence, one obtains new designs by deleting a fixed point $ \a $ as well as the blocks in $ \calD(\a) $ from them. More explicitly, one obtains $ \overline{\PG(3,2)} $ from $ \AG_3(4,2) $, and obtains the $ 2 $-$ (11,6,3) $-design or the design on $ 21 $ points in Table \ref{table_nsym}, respectively, from the design on $ 12 $ or $ 22 $ points in Table \ref{table_nsym}.\vs

It should be noted that this result relies on the well-known classification of finite 2-transitive groups (see, for example, \cite[Thm.\,5.3]{as2trans}). It follows that a 2-transitive group is of either almost simple or affine type. Further, finite 2-homogeneous groups are also known\,; refer to Wagner \cite[Thm\,2]{Wagner1965} and Kantor \cite[Thm.\,1]{Kantor1972}.

\begin{lemWK}\label{2-homo}
Let $G$ be a finite $2$-homogeneous permutation group on $ \Omega $. Then either $ G $ is $2$-transitive on $ \Omega $, or $G\leqslant \AGammaL_1(q)$ with $|\Omega|=q\equiv3\,(\mod 4)$.
\end{lemWK}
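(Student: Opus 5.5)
The statement immediately above is the Wagner--Kantor lemma. It is quoted from the literature (Wagner 1965, Kantor 1972), so the plan is to reconstruct the standard argument rather than to invent a new one. We may assume $G$ is $2$-homogeneous but not $2$-transitive on $\Omega$, with $n=|\Omega|$.

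\emph{Step 1: $n\equiv 3\,(\mod 4)$ and $|G|$ is odd.} First we show that $|G|$ is odd. If $G$ contained an involution $t$ swapping two points $x,y$, then every unordered pair, being in the $G$-orbit of $\{x,y\}$, would have a setwise stabilizer that swaps its two points. Hence both ordered pairs $(x,y)$ and $(y,x)$ of every pair would lie in one $G$-orbit, and $G$ would be $2$-transitive. So no element of $G$ interchanges two points. Take a Sylow $2$-subgroup and suppose it is nontrivial; it contains an involution $t$. Since $t$ swaps no two points but $t^2=1$, every cycle of $t$ would have length $1$ or $2$, so all cycles have length $1$ and $t=1$, a contradiction. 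Thus $|G|$ is odd. Next, $G$ has exactly two orbits on ordered pairs, interchanged by reversal, so these orbitals are paired and each has size $n(n-1)/2$. The orbital graph is a tournament whose automorphism group $G$ is arc-transitive, and hence vertex-transitive. Each vertex therefore has out-degree $(n-1)/2$, so $n$ is odd. Moreover $G_x$, which has odd order, has two orbits of equal length $(n-1)/2$ on $\Omega\setminus\{x\}$. If $(n-1)/2$ were even, an orbit of the odd-order group $G_x$ of even length would be impossible. Hence $(n-1)/2$ is odd and $n\equiv 3\,(\mod 4)$.

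\emph{Step 2: $G$ is solvable and affine.} By Feit--Thompson, $G$ is solvable. A $2$-homogeneous group is primitive, since a nontrivial block system would put some pairs inside blocks and others across blocks. A primitive solvable group has a regular elementary abelian minimal normal subgroup $N\cong \ZZ_p^d$, so $n=p^d$ and $G\le \AGL_d(p)$ with $G=N{:}G_0$. Now $G_0$ has odd order and exactly two orbits on nonzero vectors, each of size $(p^d-1)/2$, interchanged by $v\mapsto -v$.

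\emph{Step 3: $G_0$ embeds in $\GammaL_1(q)$.} This is the main obstacle. My plan is to show that $G_0$ acts irreducibly and then to classify such odd-order half-transitive linear groups. Irreducibility is immediate from primitivity. Since $|G_0|$ is odd and $|G_0|\ge (q-1)/2$, the group is large relative to $V$. I would try two routes.

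\begin{enumerate}
\item Apply Huppert's classification of solvable $2$-transitive linear groups. The group $G_0\times\{\pm1\}$ has order at most $2|G_0|$ and is transitive on $V\setminus\{0\}$, because $-1$ swaps the two orbits. Hence $G\times\{\pm1\}$, or rather $N{:}(G_0\times\langle -1\rangle)$, is a solvable $2$-transitive group. Huppert's theorem puts it in $\AGammaL_1(q)$ apart from finitely many exceptions with $p^d\in\{3^2,5^2,7^2,11^2,23^2,3^4\}$.
\item For each exceptional degree, $n\equiv 1\,(\mod 4)$, which contradicts Step 1. Hence $G_0\times\langle -1\rangle\le\GammaL_1(q)$, and therefore $G\le\AGammaL_1(q)$.
\end{enumerate}

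The delicate point is justifying that adjoining $-1$, which is central in $\GL_d(p)$, yields a transitive group. This holds because $-1$ maps each orbit onto the other, as observed in Step 2.
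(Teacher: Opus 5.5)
The paper gives no proof of this lemma. It quotes it as a black box from Wagner and Kantor and uses it only to split the analysis into cases (A)--(C). Your proposal is a correct reconstruction, and it follows the standard argument for this result:
\begin{enumerate}
\item the elementary orbital/parity argument gives $|G|$ odd and $|\Omega|\equiv 3 \pmod 4$;
\item Feit--Thompson makes $G$ solvable, hence affine (being primitive), with $G_0$ having two orbits on $V\setminus\{0\}$ that are swapped by $-1$;
\item then $V{:}(G_0\times\langle -1\rangle)$ is a solvable $2$-transitive group whose degree avoids Huppert's exceptional list $\{3^2,5^2,7^2,11^2,23^2,3^4\}$, all of which are $\equiv 1 \pmod 4$, so $G_0\le G_0\times\langle -1\rangle\le \GammaL_1(q)$.
\end{enumerate}
What your route buys over the citation is transparency. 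It shows the lemma needs only Feit--Thompson and Huppert's theorem, not the CFSG-dependent classification of $2$-transitive groups that the paper uses elsewhere.

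A few small repairs are needed.
\begin{itemize}
\item You must assume $|\Omega|\ge 3$. The trivial group on two points is $2$-homogeneous, is not even transitive, and violates the stated conclusion. This is exactly where your step ``arc-transitive, hence vertex-transitive'' fails.
\item For $|\Omega|\ge 3$, $2$-homogeneity gives transitivity directly. Then $|\Omega|$ divides the odd number $|G|$, so $|\Omega|$ is odd and the tournament detour is unnecessary.
\item Step 3 announces ``two routes'', but the enumerated items are two steps of a single route.
\item The remarks about irreducibility and $|G_0|\ge (q-1)/2$ are never used and can be deleted.
\end{itemize}
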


This paper is organized as follows. By Lemma \ref{loc-2-trans}, letting $ \calD $ be a $ G $-locally $2$-homogeneous design, then $ G $ is 2-homogeneous on the set $ \calP $ of points. Thus, by the above lemma, we have the following three cases to consider\,:
\begin{itemize}
\item[\bf(A)] $G$ is $2$-transitive on $ \calP $ and $ \calD $ is symmetric.\vs
\item[\bf(B)] $G$ is $2$-homogeneous on $ \calP $ of affine type, and either $G\leqslant \AGammaL_1(v)$\,; or $G\not\leqslant \AGammaL_1(v)$, $ \calD $ is non-symmetric.\vs
\item[\bf(C)] $G$ is $2$-transitive on $ \calP $ of almost simple type and $ \calD $ is non-symmetric.
\end{itemize}
\noindent We will discuss {\bf(A)}\,-\,{\bf(C)}, respectively, in Sections \ref{sec-Kantor}\,-\,\ref{sec-as}. Before that, we give some properties of design $ \calD $ in Section \ref{sec-preli} and construct the infinite families in Section \ref{sec-exam}.\vs

At the end of this section, we introduce the following notation for groups, which will be used throughout this paper\,:

- $\ZZ_n$\,: the cyclic group of order $n$\,;\vskip0.01in 
- $\E_{p^n}$ or $q^d$\,: an elementary abelian group of order $p^n$ or $q^d$, where $ q $ is a $ p $-power\,;\vskip0.01in 
- $[n]$\,: a group of order $n$ of unspecified structure\,;\vskip0.01in 
- $A^{m+n}$\,: a group with a normal subgroup $A^m$ such that the quotient is isomorphic to $A^n$, where $A$ is an elementary abelian group.

\section{Locally 2-homogeneous designs}\label{sec-preli}
In this section we give some properties of locally 2-homogeneous designs, as well as a generic construction of them. The first result is basic but commonly used.

\begin{lem}
The following restrictions on the design (not necessarily simple) parameters hold{\rm\,:} {\rm (1)} $ vr=bk ${\rm\,; (2)} $ \lambda(v-1)=r(k-1) ${\rm\,; (3)} $ v\leqslant b $, $ k\leq r${\rm \,; (4)} $\lambda<r $, $ \lambda v<rk $.
\end{lem}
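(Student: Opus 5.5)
We prove the four items by double counting, first flags and then incident point–pair/block triples. Fisher's inequality and the bound $k\leq r$ come last, derived from the first two identities.

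For (1), count flags $(\a,\b)\in\calI$ in two ways. Each of the $v$ points lies in $r$ blocks, and each of the $b$ blocks contains $k$ points, so $vr=bk$. This first needs that $r$ is well defined. To see this, fix a point $\a$ and count pairs $(\g,\b)$ with $\g\neq\a$ and $\a,\g\in\calD(\b)$. For each of the $v-1$ choices of $\g$ there are $\lambda$ blocks, giving $\lambda(v-1)$. For each of the $r_\a$ blocks through $\a$ there are $k-1$ choices of $\g$, giving $r_\a(k-1)$. Hence $r_\a=\lambda(v-1)/(k-1)$ is independent of $\a$, and this also proves (2). Simplicity is never used here, so the argument holds for non-simple designs as stated.

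For (3), the standard route is Fisher's inequality via the incidence matrix. Let $N$ be the $v\times b$ incidence matrix. Then $NN^{T}=(r-\lambda)I+\lambda J$, whose determinant is $(r-\lambda)^{v-1}(r+\lambda(v-1))=(r-\lambda)^{v-1}rk$. We need $r>\lambda$. By (2), $r=\lambda(v-1)/(k-1)$, and $k<v$ gives $v-1>k-1$, so $r>\lambda$ (this is the first half of (4)). Since $k\geq 2$ is implicit whenever $\lambda>0$ with $v\ge2$, we have $rk>0$. So $NN^{T}$ is nonsingular, $\operatorname{rank}N=v$, and therefore $v\leq b$. Then $vr=bk$ with $v\le b$ gives $k\le r$.

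For the second half of (4), substitute (2): $\lambda v=\lambda(v-1)+\lambda=r(k-1)+\lambda$. This is less than $rk$ exactly when $\lambda<r$, which we have already shown.

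The only genuinely nontrivial step is Fisher's inequality, that is, the determinant computation, and it is classical. The one point needing care is to establish $r>\lambda$ from $k<v$ before using the nonsingularity of $NN^{T}$.
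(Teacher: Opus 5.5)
The paper states this lemma without proof, treating it as standard, and your argument is the standard one: double counting for (1)--(2), then Fisher's inequality via $NN^{T}=(r-\lambda)I+\lambda J$ for (3), with $\lambda<r$ obtained from $k<v$ first. It is correct and never uses simplicity. The only fix is cosmetic: note $k\geq 2$ (which follows from $\lambda>0$) before you divide by $k-1$ to show $r$ is well defined, rather than afterwards.
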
  

The next observation indicates that local symmetry implies symmetry on points.

\begin{lemma}\label{loc-2-trans}
Let $\calD=(\calP,\calB,\calI)$ be a $ G $-locally transitive design. Then $\calD$ is $G$-flag transitive.
If the stabilizer $G_\b$ is $2$-homogeneous on $\calD(\b)$ for each $ \b\in\calB $, then $G$ is $2$-homogeneous on $\calP$.
Further, if $G_\b$ is $2$-transitive on $\calD(\b)$ for each $ \b\in\calB $, then $G$ is $2$-transitive on $\calP$.

\end{lemma}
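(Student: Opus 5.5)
The lemma has three parts. I will prove flag-transitivity first, and then use it to transfer the local $2$-homogeneity (resp.\ $2$-transitivity) at blocks to the whole point set. The main input is the design property that any two distinct points lie in a common block ($\lambda>0$).

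\emph{Flag-transitivity.} Let $(\a,\b)$ and $(\a',\b')$ be flags. I want some $g\in G$ with $(\a,\b)^g=(\a',\b')$. Connect the flags by a chain: choose a block $\g$ containing both $\a$ and $\a'$ (or any block through $\a$ if $\a=\a'$), which exists since $\lambda>0$. Because $G_\a$ is transitive on $\calD(\a)$, there is $h_1\in G_\a$ with $\b^{h_1}=\g$. Because $G_\g$ is transitive on $\calD(\g)$, there is $h_2\in G_\g$ with $\a^{h_2}=\a'$. Because $G_{\a'}$ is transitive on $\calD(\a')$, there is $h_3\in G_{\a'}$ with $\g^{h_3}=\b'$. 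Then $h_1h_2h_3$ maps $(\a,\b)$ to $(\a,\g)$, then to $(\a',\g)$, then to $(\a',\b')$. As a side consequence, $G$ is transitive on $\calP$ and on $\calB$.

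\emph{$2$-homogeneity.} Let $\{\a_1,\a_2\}$ and $\{\a_1',\a_2'\}$ be $2$-subsets of $\calP$. Pick blocks $\b\ni\a_1,\a_2$ and $\b'\ni\a_1',\a_2'$, using $\lambda>0$. By the block-transitivity just established, some $g\in G$ satisfies $\b^g=\b'$. Then $\{\a_1,\a_2\}^g$ is a $2$-subset of $\calD(\b')$. Since $G_{\b'}$ is $2$-homogeneous on $\calD(\b')$, some $h\in G_{\b'}$ carries $\{\a_1,\a_2\}^g$ to $\{\a_1',\a_2'\}$, and $gh$ is the required element.

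\emph{$2$-transitivity.} The argument for ordered pairs is the same, now applying $2$-transitivity of $G_{\b'}$ on $\calD(\b')$. For this to make sense we need $k\ge2$ so that $2$-subsets exist inside blocks. This follows from $\lambda>0$ together with $v\ge2$, which is implicit in the definition of a design.

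The only delicate point is the first step, namely getting from local transitivity at both points and blocks to global flag-transitivity. The chain through a common block $\g$ handles it, and it relies only on $\lambda>0$, so I expect no real obstacle.
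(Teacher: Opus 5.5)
Your proposal is correct and follows the paper's proof essentially step for step. Flag-transitivity comes from the chain $(\a,\b)\to(\a,\g)\to(\a',\g)\to(\a',\b')$ through a block $\g$ common to $\a$ and $\a'$. The $2$-homogeneity (resp. $2$-transitivity) then comes from block-transitivity combined with the local action of $G_{\b'}$ on $\calD(\b')$. Your version is slightly more careful than the paper's: it treats the case $\a=\a'$ explicitly, notes that $k\ge 2$, and takes $h\in G_{\b'}$ where the paper writes $G_\b$.
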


\begin{proof}

Since $G$ is locally transitive on $\mathcal{D}$, for any two flags $(\a,\b)$ and $(\a',\b')$ on $\calD$, we can find a block $\b''\in \calD(\a) \cap \calD(\a')$, such that the flags $(\a,\b),(\a,\b''),(\a',\b''),(\a',\b')$ lie in the same $G$-orbit. Hence $ G $ is flag-transitive on $\calD$, and $G$ is transitive on $\calB$.

For any two 2-subsets of $\calP$, say $\{\a_1,\a_2\}$, $\{\a_1',\a_2'\}$, let $\b,\b'\in\calB$ be two blocks such that $\a_1,\a_2\in\calD(\b)$ and $\a_1',\a_2'\in\calD(\b')$.
There exists some $g\in G$ mapping $ \b $ to $\b'$, so that, $\a_1^g,\a_2^g\in\calD(\b')$. 
Since $G_\b$ is $2$-homogeneous on $\calD(\b)$, there exists some $ h\in G_\b $ mapping $\{\a_1,\a_2\}^{g}$ to $ \{\a_1',\a_2'\}$. It follows that $G$ is 2-homogeneous on $\calP$. By a similar argument, if $G_\b$ is $2$-transitive on $\calD(\b)$, then $G$ is $2$-transitive on $\calP$.
\end{proof}

We then present a generic construction of locally 2-homogeneous 2-designs.

\begin{construction}\label{cons}
{\rm Let $G$ be a group which has two subgroups $H,K$ such that $G \ne HK$, $G$ is faithful and 2-homogeneous on $[G:H]$, $H$ is $ 2 $-homogeneous on $[H:H\cap K]$, and meanwhile, $K$ is $ 2 $-homogeneous on $[K:H\cap K]$. Let $\calP=[G:H]$, $\calB=[G:K]$. Define the incidence relation $\calI$ as \[Hx\,\calI\, Ky\,\Leftrightarrow\, Hx\cap Ky\not=\emptyset, \mbox{\ for\ } Hx\in\calP,\,  Ky\in\calB.\]}
\end{construction}

\begin{lemma}\label{lem:cons}
The geometry $\calD=(\calP,\calB,\calI)$ defined above is a simple $G$-locally $2$-homogeneous design, with $ v=|G:H| $, $ b=|G:K| $, $ r=|H:H\cap K| $ and $ k=|K:H\cap K| $.
\end{lemma}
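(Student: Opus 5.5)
We work with the coset geometry directly: blocks are the cosets $Ky$, a block's point set is $\{Hx : Hx\cap Ky\ne\emptyset\}$, and we check the local actions, the count of blocks through two points, simplicity, and non-triviality in that order.

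\textbf{Local actions and parameters.} $G$ acts on $\calP$ and $\calB$ by right multiplication, and this action preserves $\calI$. The stabilizer of the point $H$ is $H$, and the blocks incident with it are the cosets $Ky$ with $H\cap Ky\neq\emptyset$, that is, $Ky=Kh$ for some $h\in H$. These are in bijection with $[H:H\cap K]$ via $(H\cap K)h\mapsto Kh$, and the bijection is $H$-equivariant. Hence $r=|H:H\cap K|$ and $H$ acts on $\calD(H)$ $2$-homogeneously. Symmetrically, $\calD(K)\cong[K:H\cap K]$ as a $K$-set, so $k=|K:H\cap K|$ and $K$ is $2$-homogeneous on $\calD(K)$. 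Since $G$ is transitive on $\calP$ and on $\calB$, every point and every block has a conjugate local action, so the structure is $G$-locally $2$-homogeneous. Also $v=|G:H|$ and $b=|G:K|$.

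\textbf{Balance.} We need the number of blocks through two distinct points to be a constant $\lambda>0$. The group $G$ is $2$-homogeneous on $\calP$, so it is transitive on unordered pairs of distinct points, and the number $\lambda$ of blocks through a pair is constant. It is positive because $k\ge 2$. Indeed, $k=1$ would force $K\le H$, so $K$ would be the full stabilizer of a single point and hence $HK=H\ne G$. In that case $2$-homogeneity of $K$ on a one-point set is vacuous, which makes this a degenerate case. We therefore need $H\cap K<K$, and I would argue this as part of the hypothesis that the structure is a genuine design, or treat the case separately.

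Given $k\ge2$, some pair of points lies in a common block, so $\lambda>0$. We also need $k<v$. If $k=v$, the block $K$ contains every point, which means $K$ meets every coset $Hx$, and then $G=KH$. Taking inverses gives $G=HK$, contradicting $G\ne HK$.

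\textbf{Simplicity and non-triviality.} This is the main obstacle. Distinct cosets $Ky\ne Ky'$ could a priori give the same point set. The set of $g$ with $\calD(Kg)=\calD(K)$ is a subgroup $L\ge K$, namely the setwise stabilizer of the point set of $K$. I would handle this by passing to the simple design whose blocks are the distinct point sets $\calD(Ky)$, that is, by replacing $K$ with $L$. Alternatively, I would check that $L=K$ using the maximality of $K$ in the setwise stabilizer; the paper presumably treats blocks as point sets anyway, identifying $\b$ with $\calD(\b)$.

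Non-triviality, $\calB\ne\calP^{\{k\}}$, should follow from comparing $b$ with $\binom vk$, or we would simply note that it holds in the applications. The parameter identities $vr=bk$ and $\lambda(v-1)=r(k-1)$ then follow by double counting.
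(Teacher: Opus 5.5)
Your treatment of the local actions, the constancy of $\lambda$, and $k<v$ is correct and agrees with the paper. The degenerate case $K\le H$ that you flag is excluded by the same implicit convention the paper uses: it needs some $x\in K\setminus H\cap K$.

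The genuine gap is simplicity. You identify it as the main obstacle but leave it unresolved, and neither fallback proves the lemma as stated.

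\textbf{Why the fallbacks fail.} Passing to the distinct point sets $\calD(Ky)$ means replacing $K$ by the setwise stabilizer $L=G_{\calD(K)}$. That gives $b=|G:L|$ and block stabilizer $L$. The lemma instead asserts that the geometry built from $K$ is already simple, with $b=|G:K|$. The paper's identification of $\b$ with $\calD(\b)$ is exactly what simplicity justifies, so it cannot be assumed either. Your other option, ``maximality of $K$ in the setwise stabilizer'', is not among the hypotheses. Those give only maximality of $H\cap K$ in $H$ and in $K$, and of $H$ in $G$.

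\textbf{What is missing: a proof that $L=K$, run through the point stabilizer $H$.}
First, if $g\in L$ then $Hg\in\calD(K)$, so $Hg\cap K\neq\emptyset$ and $g\in HK$. Hence $K\le L\subseteq HK$, and therefore $HL=HK$.

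Next, $H$ is $2$-homogeneous, hence primitive, on $[H:H\cap K]$. So $H\cap K$ is maximal in $H$, and $H\cap L$ is either $H\cap K$ or $H$.

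Suppose $H\le L$. Primitivity of $G$ on $[G:H]$ gives $L=H$ or $L=G$. The first forces $K\le H$, the degenerate case. The second contradicts $L\subseteq HK\neq G$.

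So $H\cap L=H\cap K$. Comparing $|HL|=|H||L|/|H\cap L|$ with $|HK|=|H||K|/|H\cap K|$ gives $|L|=|K|$, hence $L=K$. Equivalently, $L$ is transitive on $\calD(K)$ with point stabilizer $L\cap H$, so $|L:L\cap H|=k=|K:K\cap H|$, and $L=K$ as soon as $L\cap H=K\cap H$.

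Now $\calD(Kg)=\calD(K)$ forces $g\in K$, which is simplicity. This also shows that $K$ is the full stabilizer of the block as a point set. Your local-action computation, done on cosets, therefore really concerns the design in the paper's sense.

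Non-triviality is not part of the statement, so you need not argue it.
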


\begin{proof}
With the right multiplication action of $ G $ on $ \calP $ and $ \calB $ we have $G\leq\Aut(\calD) $. 
Let $\a:=H\in\calP$ and $\b:=K\in\calB$. Since $Hx\cap K\not=\emptyset$ if and only if $x \in HK$, we have $K \leqslant G_{\calD(\b)} \subseteq HK$. Then $HK=HG_{\calD(\b)}$, and $G\ne HK$ implies that $v\ne k$.
As $ 2 $-homogeneneity implies primitivity, it follows that $H\cap K$ is maximal on $H$.
Note that $H\cap K\leq H\cap G_{\calD(\b)}\leq H$. 
If $ H\cap G_{\calD(\b)}= H$, then $H \leqslant G_{\calD(\b)}$.
Since $H$ is maximal on $G$ and $G_{\calD(\b)} \ne G$, we have $H= G_{\calD(\b)}$ and $K \leqslant H$, a contradiction. Hence $H\cap K= H\cap G_{\calD(\b)}$,
and
$$\frac{|H||G_{\calD(\b)}|}{|H\cap G_{\calD(\b)}|}=|HG_{\calD(\b)}|=|HK|=\frac{|H||K|}{|H\cap K|}.$$
Thus $K=G_{\calD(\b)}$. Assume that $\calD$ is not simple, and $\calD(\b)=\calD(\b')$, where $K=\b\ne \b'$.
Since $G$ is transitive on $\calB$, there exists $g\in G$ such that $\b'=\b^g=Kg$, and $K\ne Kg$. Thus $g\notin K$.
Note that $\calD(\b)^g=\calD(\b^g)=\calD(\b')=\calD(\b)$. Hence $g\in G_{\calD(\b)}$, implying that $K<G_{\calD(\b)}$. Hence $\calD$ is simple.

Since $ G $ is $ 2 $-homogeneous on $\calP$, each pair of points is incident with the same number, say $\lambda$, of blocks. Fix a point $H\in\calP$ and take an element $x\in K \setminus H\cap K$. Then $H$, $Hx$ are two distinct points incident with $K$. Thus, $\lambda>0$ and $\calD$ is a design. The action of $ G_H$ on $ \calD(H) $ is equivalent to the action of $ H $ on $[H:H\cap K]$, so, by assumption, it is 2-homogeneous. Similarly, $G_K$ is 2-homogeneous on $\calD(K)$. Hence $G$ is locally $ 2 $-homogeneous on $ \calD $. Further, the parameters are clear.
\end{proof}

According to Construction \ref{cons}, for a given 2-homogeneous permutation group $G$ on a set $\calP$ of degree $v$ and a parameter set $(v, b, r, k,\lambda)$, we may apply the following procedure to verify whether there exists a $G$-locally 2-homogeneous 2-design with these parameters.

Step 1. Find conjugacy classes of subgroups of index $b$ in $G$.

Step 2. Take a representative $K$ from such a conjugacy class, as a candidate for a block stabilizer.

Step 3. Find a point-orbit $O$ of $K$ of length $k$, as a candidate for a block.

Step 4. Let $H$ be the stabilizer of $G$ of a point in $O$. Verify whether $|H:H\cap K|=r$ holds.

Step 5. Check whether the two coset actions, $H$ on $[H:H\cap K]$, and $K$ on $[K:H\cap K]$, are 2-homogeneous.

This procedure can be implemented using {\sc Magma}, and will be applied in the proof of the main theorems for some sporadic cases.

\vs\vs\vs

The following result gives a key property of locally 2-homogeneous designs. A design is called \textit{quasi-symmetric} if there are {\it intersection numbers} $ \ell_1 $, $ \ell_2 $ such that any two distinct blocks have exactly $ \ell_1 $ or $ \ell_2 $ points in common. 

\begin{lemma}\label{c}
Let $\calD=(\calP,\calB,\calI)$ be a $ G $-locally transitive design. If the point stabilizer $G_{\a}$ is $2$-homogeneous on $\calD(\a)$, then $ \calD $ is either symmetric, or quasi-symmetric with intersection numbers $0$, $c$, where $c=\frac{(k-1)(\lambda-1)}{r-1}+1$ is an integer determined by the parameters.
\end{lemma}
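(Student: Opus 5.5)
Fix a point $\a$ and a block $\b$ incident with $\a$. Since $\calD$ is $G$-locally transitive, Lemma~\ref{loc-2-trans} shows that it is $G$-flag-transitive, so $G$ is transitive on $\calB$. The plan is to count, for the fixed flag $(\a,\b)$, the blocks through $\a$ other than $\b$ and how many points they share with $\b$, and to use $2$-homogeneity of $G_\a$ on $\calD(\a)$ to force all such intersections to have the same size.

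\textbf{Step 1 (constant intersection through a point).} The group $G_\a$ is $2$-homogeneous on $\calD(\a)$, so it is transitive on unordered pairs $\{\b,\b'\}$ of distinct blocks through $\a$. The quantity $|\calD(\b)\cap\calD(\b')|$ is $G$-invariant and symmetric in $\b,\b'$. Hence it takes a single value $c$ on all such pairs.

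\textbf{Step 2 (compute $c$).} Count pairs $(\g,\b')$ with $\g\in\calD(\b)\setminus\{\a\}$ and $\b'\in\calD(\a)\cap\calD(\g)$, $\b'\neq\b$. Each of the $k-1$ points $\g$ lies together with $\a$ on $\lambda-1$ further blocks. On the other side, each of the $r-1$ blocks $\b'\ne\b$ through $\a$ meets $\b$ in $c-1$ points besides $\a$. This gives $(k-1)(\lambda-1)=(r-1)(c-1)$, so $c=\frac{(k-1)(\lambda-1)}{r-1}+1$. The case $r=1$ is excluded, since $\lambda<r$ and $\lambda\geq1$. Since $c$ is a cardinality, it is an integer.

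\textbf{Step 3 (any two blocks).} Take distinct blocks $\b,\b'$. If they share a point $\a$, then by Step~1 they meet in exactly $c$ points. Otherwise they meet in $0$ points. So every pair of distinct blocks meets in $0$ or $c$ points. If the value $0$ never occurs, then any two blocks intersect in exactly $c$ points, and I claim this forces $\calD$ to be symmetric. The standard fact (the dual form of Fisher's inequality) is that a design with constant block intersection $c<k$ has $b\le v$. Combined with $v\le b$ this gives $v=b$. For the case $c=k$: two distinct blocks would then coincide as point sets, contradicting simplicity, so $c<k$ holds.

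To prove $b\le v$, consider the incidence matrix $N$ (of size $v\times b$). Then $N^{T}N=(k-c)I+cJ$, which is nonsingular of rank $b$ because $k>c$. Since this $b\times b$ matrix factors through $N$, we get $b\le\operatorname{rank}N\le v$.

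Therefore $\calD$ is either symmetric, or both intersection sizes $0$ and $c$ occur, in which case it is quasi-symmetric with intersection numbers $0,c$. The main subtle points are the invocation of the dual Fisher inequality and checking $c<k$; the rest is direct counting.
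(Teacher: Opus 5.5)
Your proof is correct and follows essentially the paper's argument: a constant intersection size from $2$-homogeneity of $G_\a$ on $\calD(\a)$, Fisher's inequality for the dual (which you prove via $N^{T}N=(k-c)I+cJ$) when no two blocks are disjoint, and a double count that is exactly the identity $\lambda'(v'-1)=r'(k'-1)$ for the paper's auxiliary design on $\calD(\a)$; doing the count directly spares you the paper's separate $\lambda=1$ case. One small fix: Step~3 applies Step~1 at an arbitrary point, so what you need from flag-transitivity is transitivity on $\calP$ (to carry the hypothesis on $G_\a$ to every point), not on $\calB$; your Step~2 formula then shows that $c$ takes the same value at every point.
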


\begin{proof}
Note that there exist two distinct blocks intersecting in $ c>0 $ points. Define $$S=\{ \{\b,\gamma\}\in \calB^{\{2\}} \ |\ \calD(\b) \cap \calD(\gamma) \ne \emptyset \}.$$ 
For any $\{\b,\gamma\}$, $\{\b',\gamma'\} \in S$, there exist $\alpha \in \calD(\b) \cap \calD(\gamma)$, $\alpha' \in \calD(\b') \cap \calD(\gamma')$, or equivalently, $\b, \gamma \in \calD(\a)$, $\b', \gamma' \in \calD(\a')$. By Lemma \ref{loc-2-trans}, $G$ is flag-transitive on $ \calD $, thus transitive on $ \calP $\,; meanwhile, $G_{\a}$ is $2$-homogeneous on $\calD(\a)$. It follows that $ G $ is transitive on $S$. Thus, either $ \calD $ is quasi-symmetric with intersection numbers $ 0 $, $ c $\,; or any two distinct blocks of $ \calD $ intersect in $ c $ points, so that, the dual of $ \calD $ is also a $ 2 $-design, and hence $ v=b $, $ \calD $ is symmetric. 

We then determine the number $ c $. If $\lambda=1$, no two blocks intersect in more than one point, so $ c=1 $, as required. Suppose $\lambda\geqslant 2$. Then there are two blocks intersecting in at least two points, so $ c\geqslant 2 $. Let $ \a\in\calP $ be a fixed point, and define a new geometry $ \calD'=(\calP',\calB',\calI') $ where $\calP'=\calD(\a)$, $ \calB'=\calP \setminus \{\a\}$, and $ \calI' $ inherits from $ \calI $. For any two blocks $\b,\gamma\in\calD(\a)$, with the exception of $ \a $, they intersect in $ c-1 $ points. Thus, $\calD'$ is a $2$-design (not necessarily simple) with parameters $$(v',b',r',k',\lambda')=(r,\,v-1,\,k-1,\,\lambda,\,c-1).$$ It then follows from $\lambda'(v'-1)=r'(k'-1)$ that $c=\frac{(k-1)(\lambda-1)}{r-1}+1.$
\end{proof}

\begin{lemma}\label{k<=v/2}
Let $\calD=(\calP,\calB,\calI)$ be a (non-trivial) $ G $-locally $2$-homogeneous design. Then $ 3\leq k \leq r $ and $k\ne v-2$, $v-1$. Further, if $ r=3 $, then $ \calD $ is the unique symmetric design $ \PG(2,2) ${\rm\,;} if $ \calD $ is non-symmetric, then $k \leqslant v/2$.
\end{lemma}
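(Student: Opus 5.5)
The plan is to obtain everything from the parameter relations, Lemma~\ref{loc-2-trans} (which makes $G$ transitive on $\calB$ and $2$-homogeneous on $\calP$) and Lemma~\ref{c}. The one idea used repeatedly is this: if a block is determined by a small set of points, then transitivity of $G$ on $\calB$ together with $2$-homogeneity on $\calP$ forces $\calB$ to contain every $k$-subset. That contradicts non-triviality.

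\emph{The inequalities $3\le k\le r$.} The bound $k\le r$ is immediate from $v\le b$ (Fisher) and $vr=bk$. Since $\lambda>0$ we have $k\ge 2$. Suppose $k=2$. By simplicity each block is a $2$-subset of $\calP$. Since $G$ is $2$-homogeneous on $\calP$, it is transitive on $2$-subsets, so the $G$-orbit $\calB$ of one block is all of $\calP^{\{2\}}$. The design would then be trivial, a contradiction.

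\emph{The exclusions $k\ne v-1$ and $k\ne v-2$.} If $k=v-1$, each block is determined by the single point it misses. Transitivity of $G$ on $\calP$ then gives $\calB=\calP^{\{v-1\}}$, which is trivial. If $k=v-2$, each block is the complement of a $2$-subset of $\calP$. By $2$-homogeneity, $\calB$ is the set of complements of all $2$-subsets, that is, $\calB=\calP^{\{v-2\}}$, again trivial.

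\emph{The case $r=3$.} From $3\le k\le r=3$ we get $k=r$, hence $v=b$ and $\calD$ is symmetric. Now $\lambda(v-1)=r(k-1)=6$ and $\lambda<r$ give $\lambda\in\{1,2\}$. If $\lambda=2$, then $v=4$ and $k=3=v-1$, which is excluded above. So $\lambda=1$, $v=7$, $k=3$, and $\calD$ is a projective plane of order $2$. This is the unique Fano plane $\PG(2,2)$.

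\emph{Non-symmetric designs.} Here I apply Lemma~\ref{c}, noting that $G_\a$ is $2$-homogeneous on $\calD(\a)$ by hypothesis. The point needing care is that the intersection number $0$ genuinely occurs. This follows from the argument of that lemma: if every two distinct blocks met, then $S=\calB^{\{2\}}$, all pairs of blocks would meet in exactly $c$ points, and the dual would be a $2$-design. Fisher's inequality for the dual would give $b\le v$, and hence $\calD$ would be symmetric, contrary to assumption. So there are two disjoint blocks, giving $2k\le v$, that is, $k\le v/2$. I expect no real obstacle; this last step, verifying that disjoint blocks exist, is the only point beyond the bare statement of Lemma~\ref{c}.
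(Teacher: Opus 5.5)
Your proposal is correct and follows essentially the same route as the paper: non-triviality together with the $2$-homogeneity of $G$ on $\calP$ (hence $i$-homogeneity for $i\in\{1,2,v-2,v-1\}$) excludes those values of $k$, Fisher's inequality gives $k\le r$, the case $r=3$ reduces to $(v,k,\lambda)=(7,3,1)$, and Lemma~\ref{c} supplies two disjoint blocks in the non-symmetric case. Your additional checks make explicit what the paper leaves implicit: that the intersection number $0$ genuinely occurs (via the dual design and Fisher), and that $\lambda=2$ is impossible when $r=3$.
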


\begin{proof}
By Lemma \ref{loc-2-trans}, $ G $ is $ 2 $-homogeneous on $ \calP $, so it is also $ i $-homogeneous on $ \calP $ for $ i\in\{1,2,v-2,v-1\} $. It follows from $\calB\ne\calP^{\{k\}}$ that $ k\ne i $. Further, if $ r=3 $, then $ k=3 $, and the only solution for $ \lambda(v-1)=r(k-1) $ is $ (v,k,\lambda)=(7,3,1) $. It is known that the unique $ 2 $-$(7,3,1) $ design is $ \PG(2,2) $. At last, if $ \calD $ is non-symmetric, by lemma \ref{c} there exist two disjoint blocks, so $k\leq v/2$.
\end{proof}

The following are some properties of quasi-symmetric designs. A $ 2 $-design is called a \textit{$ t $-$ (v,k,\lambda_t) $ design}, if any $ t>2 $ distinct points are incident with exactly $\lambda_t$ common blocks. It is easy to check that a $t$-$(v,k,\lambda_t)$ design is an $s$-$(v,k,\lambda_s)$ design for $2\leq s\leqslant t$, where $\lambda_s=\lambda_t {v-s \choose t-s}/{k-s \choose t-s}$.

\begin{lemma}\cite[Prop.\,3.6]{quasi-symmetric}\label{quasi_sym_4design}
Let $\calD$ be a $2$-$ (v,k,\lambda) $ design with $4\leqslant k \leqslant v-4$. Then any two of the following imply the third{\rm\,:}
\begin{itemize}
\item [\rm (1)] $\calD$ is quasi-symmetric{\rm\,;}\vskip0.02in
\item [\rm (2)] $\calD$ is a $4$-design{\rm\,;}\vskip0.02in
\item [\rm (3)]	$b =\frac{1}{2}v(v-1)$.	
\end{itemize}
\end{lemma}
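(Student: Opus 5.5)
The plan is to deduce this from the Ray-Chaudhuri--Wilson inequalities for $s=2$, working with inclusion matrices. Let $W$ be the $\binom{v}{2}\times b$ matrix whose rows are indexed by $2$-subsets of $\calP$ and whose columns are indexed by blocks, with entry $1$ when the $2$-subset lies in the block. More generally, let $W_{ij}$ denote the inclusion matrix of $i$-subsets versus $j$-subsets of $\calP$. Two classical facts drive everything:
\begin{itemize}
\item[(a)] If $\calD$ has at most $s$ distinct intersection numbers, then $b\le\binom{v}{s}$.
\item[(b)] If $\calD$ is a $2s$-design with $k\le v-s$, then $b\ge\binom{v}{s}$.
\end{itemize}
Here $s=2$, and the hypothesis $4\le k\le v-4$ is exactly what makes the relevant matrices $W_{ik}$ ($i\le 4$) and $W_{2,v-2}$-type Gram matrices nonsingular, so that all the rank statements below hold.

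For (b), I would compute $WW^{T}$ when $\calD$ is a $4$-design. The entry indexed by $2$-subsets $S,T$ is the number of blocks containing $S\cup T$, which equals $\lambda_{|S\cup T|}$ with $|S\cup T|\in\{2,3,4\}$. Hence
\[WW^{T}=\lambda_4 W_{20}W_{20}^{T}\text{-type combination}=\sum_{i=0}^{2} c_i\, W_{i2}^{T}W_{i2}\]
with explicit positive coefficients $c_i$ computed from $\lambda_2,\lambda_3,\lambda_4$. Using Wilson's positivity of these coefficients when $k\le v-2$, this matrix is positive definite. Therefore $\binom v2=\operatorname{rank}(WW^{T})\le b$, which gives (b).

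For (a), for each block $B$ I would define the polynomial function on $v$-variables
\[f_B(x)=\prod_{\ell\in\{\ell_1,\ell_2\},\,\ell\ne k}\Bigl(\textstyle\sum_{p\in B}x_p-\ell\Bigr),\]
reduced multilinearly. Evaluating at characteristic vectors of blocks gives a triangular, hence nonsingular, system. This shows that the $f_B$ are linearly independent in a space of dimension $\binom v2+\binom v1+1$. The usual refinement, adding the $\binom{v}{1}+1$ lower-degree monomials and proving they remain independent, then yields $b\le\binom v2$.

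With (a) and (b) in hand, the implications go as follows. Conditions (1) and (2) together give $\binom v2\le b\le\binom v2$, which is (3). Conditions (2) and (3) mean that $WW^{T}$ is a square nonsingular matrix, so $W$ is invertible. One then expresses $W^{T}W$, whose entries are the intersection sizes $|B\cap B'|$ choose $2$, through the relation $W^{T}W\cdot W^{T}=W^{T}(WW^{T})$. Combined with the positive-definite decomposition above, this forces $\binom{|B\cap B'|}{2}$ to satisfy a quadratic relation in $|B\cap B'|$ for $B\ne B'$, so only two intersection sizes occur. This is the standard ``tight design'' argument, and it yields (1).

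The hardest step is showing that (1) and (3) imply (2), namely that equality in the Ray-Chaudhuri--Wilson upper bound forces a $4$-design. I would first use the equality case of the polynomial argument: the functions $f_B$ must span the whole space of multilinear polynomials of degree at most $2$. Then, for each $4$-subset $T$, I would express the indicator count $\#\{B\supseteq T\}$ via the known moments $\sum_i\binom{i}{j}n_i$ for $j\le 2$, where $n_i$ is the number of blocks meeting a fixed block in $i$ points; these moments are determined because $\calD$ is a $2$-design. Finally, I would show that this count is independent of $T$, which makes $\calD$ a $4$-design. This is where I expect the main obstacle to lie, and I would follow the treatment cited from the quasi-symmetric design literature for the delicate equality analysis.
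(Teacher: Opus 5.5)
\textbf{Where the proposal stands.} The paper does not prove this statement; it simply quotes \cite[Prop.\,3.6]{quasi-symmetric}. Your architecture is the standard one behind that result: the Ray-Chaudhuri--Wilson upper bound for two intersection numbers, Wilson's lower bound $b\ge\binom v2$ for $4$-designs with $k\le v-2$, and an equality analysis. Your (a), (b) and the implication (1)+(2)$\Rightarrow$(3) are fine.

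\textbf{The gap.} The direction (1)+(3)$\Rightarrow$(2), which you flag and then defer, is not established, and the route you sketch does not work as stated. First, the $b=\binom v2$ functions $f_B$ cannot span the space of multilinear polynomials of degree at most $2$, since that space has dimension $\binom v2+v+1$. They span it only together with the auxiliary polynomials $(\sum_p x_p-k)x_I$, $|I|\le1$; equivalently, they span the degree-$\le2$ functions on the $k$-slice. More seriously, the moments $\sum_i\binom ij n_i$ only record how other blocks meet a fixed block; this is exactly what yields the integrality of $c$ in Lemma~\ref{c}. They say nothing about how many blocks pass through an arbitrary $4$-set $T$, and you give no mechanism linking the two.

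\textbf{The missing idea.} Run Ray-Chaudhuri--Wilson with matrices, then invert inside the Bose--Mesner algebra $\mathcal{A}$ of the Johnson scheme on $2$-subsets, i.e.\ the span of $I$, $W_{12}^{T}W_{12}$ and the all-ones matrix $J$. Let $N$ be the $v\times b$ incidence matrix and $J_b$ the $b\times b$ all-ones matrix. Then $N=(k-1)^{-1}W_{12}W$ and $J_b={\binom k2}^{-2}W^{T}JW$. The identity $(m-\ell_1)(m-\ell_2)=2\binom m2+(1-\ell_1-\ell_2)m+\ell_1\ell_2$ therefore gives
\[
(k-\ell_1)(k-\ell_2)\,I_b=W^{T}AW,\qquad A=2I+\frac{1-\ell_1-\ell_2}{(k-1)^2}\,W_{12}^{T}W_{12}+\frac{\ell_1\ell_2}{{\binom k2}^{2}}\,J\in\mathcal{A}.
\]
This gives (a) immediately, since $b\le\operatorname{rank}W\le\binom v2$.

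\textbf{Proof of (1)+(3)$\Rightarrow$(2).} If also $b=\binom v2$, then $W$ is square of rank $b$, hence invertible. So $WW^{T}=(k-\ell_1)(k-\ell_2)A^{-1}$, and this lies in $\mathcal{A}$ because an inverse is a polynomial in $A$. Hence, for $2$-sets $S,T$, the number of blocks containing $S\cup T$ depends only on $|S\cap T|$. Taking $S\cap T=\emptyset$ shows every $4$-set lies in the same number of blocks, which is (2).

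\textbf{Fixing (2)+(3)$\Rightarrow$(1).} The same device is what actually drives this direction. The identity $W^{T}W\cdot W^{T}=W^{T}(WW^{T})$ you quote is mere associativity and carries no information. What you need is that $(WW^{T})^{-1}=\alpha I+\beta W_{12}^{T}W_{12}+\gamma J$ lies in $\mathcal{A}$. Then $I_b=W^{T}(WW^{T})^{-1}W$ reads $\delta_{BB'}=\alpha\binom m2+\beta(k-1)^2m+\gamma{\binom k2}^2$ with $m=|B\cap B'|$. This is a polynomial of degree at most $2$ that is nonzero, since it equals $1$ at $m=k$, so at most two intersection sizes occur. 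A single intersection size would force $b\le v<\binom v2$.
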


\begin{lemma} \cite[Thm.\,3.7]{quasi-symmetric} \label{quasi_sym_b=2v-2}
Let $ \calD $ be a quasi-symmetric design with $b= 2v-2$. Then $ \calD $ is either a Hadamard $3$-design\,\footnote{\,For the construction of Hadamard matrix and Hadamard 3-design, see \cite[Exam.\,19.3]{Course}.} or the unique $2$-$(6,3,2)$ design.
\end{lemma}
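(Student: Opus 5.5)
The plan is to pin down the parameters first and then use the intersection numbers to recover the Hadamard structure. Let $\calD$ be a quasi-symmetric $2$-$(v,k,\lambda)$ design with $b=2v-2$ and intersection numbers $x<y$.

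\textbf{Step 1: the parameters.} From $vr=bk$ we get $r=2k(v-1)/v$. Since $\gcd(v,v-1)=1$, this forces $v\mid 2k$. As $k<v$, we must have $v=2k$. Substituting back, $r=2k-1=v-1$, and $\lambda(v-1)=r(k-1)$ gives $\lambda=k-1$. So $\calD$ is a $2$-$(2k,k,k-1)$ design with $b=4k-2$, which are exactly the parameters of a Hadamard $3$-design (or its $2$-design shadow).

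\textbf{Step 2: the intersection numbers.} Fix a block $B$ and let $n_x,n_y$ be the numbers of other blocks meeting $B$ in $x$ and $y$ points respectively. Double counting gives the standard three equations
\[
n_x+n_y=4k-3,\qquad xn_x+yn_y=k(r-1)=2k(k-1),
\]
\[
x(x-1)n_x+y(y-1)n_y=k(k-1)(\lambda-1)=k(k-1)(k-2).
\]
Consequently $\sum_C(|B\cap C|-x)(|B\cap C|-y)=0$, summing over the other blocks $C$.

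I would eliminate $n_x,n_y$ to get one relation between $x$, $y$ and $k$, then analyse the resulting Diophantine condition together with $0\le x<y\le k-1$ and $n_x,n_y\ge 0$ integers. The expected outcome is one of two cases:
\begin{itemize}
\item $x=0$ with $n_x=1$ and $y=k/2$, so the unique block disjoint from $B$ is its complement;
\item the small exceptional case $k=3$, giving a $2$-$(6,3,2)$ design with $b=10$ and intersection numbers $1,2$.
\end{itemize}
The second case is then identified as the unique $2$-$(6,3,2)$ design by a direct small argument. This Diophantine elimination is the main obstacle. I would first try the variance trick: compare $\sum_C(|B\cap C|-k/2)^2$ with the quadratic identity above, in order to force $n_x$ to be small.

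\textbf{Step 3: the $3$-design property (case $x=0$, $y=k/2$).} Here blocks come in complementary pairs, and two non-complementary blocks meet in $k/2$ points. Fix a point $\alpha$. The $r=2k-1$ blocks through $\alpha$ pairwise meet in $k/2$ points, since the complement of a block through $\alpha$ misses $\alpha$. Hence the derived structure at $\alpha$ has $2k-1$ points and $2k-1$ blocks of size $k-1$, any two blocks meeting in $k/2-1$ points. So its dual is a symmetric $2$-design, and therefore so is the derived structure itself, namely a $2$-$(2k-1,k-1,k/2-1)$ design. Thus any three points $\alpha,\beta,\gamma$ lie in exactly $k/2-1$ common blocks, independently of the choice of points. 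Hence $\calD$ is a $3$-design with the Hadamard parameters, and so it is a Hadamard $3$-design.
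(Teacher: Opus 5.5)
Your Steps 1 and 3 are correct. The derived-design/dual argument in Step 3 is a clean way to get the $3$-design property once you know $x=0$, $y=k/2$ and $n_x=1$. (The paper gives no proof of this lemma; it quotes Shrikhande--Sane.)

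The gap is in Step 2. The dichotomy you expect does \emph{not} follow from the constraints you propose to use. The three counting equations, together with $0\le x<y\le k-1$ and $n_x,n_y$ non-negative integers, have spurious solutions, for instance
\[
k=9,\ (x,y)=(4,6),\ (n_x,n_y)=(27,6);\qquad k=15,\ (x,y)=(7,10),\ (n_x,n_y)=(50,7).
\]
For $k=9$ the three equations check out: $27+6=33=4k-3$, $4\cdot 27+6\cdot 6=144=2k(k-1)$ and $12\cdot 27+30\cdot 6=504=k(k-1)(k-2)$. The variance trick cannot exclude these. The identities $\sum_C(2|B\cap C|-k)=-k$ and $\sum_C(2|B\cap C|-k)^2=k^2$ are only a linear recombination of your three equations; for $k=9$ they read $-27+18=-9$ and $27+54=81$. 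For $k=9$, even the local counts of blocks through a point on $B$, or through a point off $B$, come out integral. So no elimination of $n_x,n_y$ alone can finish Step 2.

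The missing ingredient is an integrality condition coming from the block graph. Let $A$ be the adjacency matrix of the graph on $\calB$ in which two blocks are adjacent when they meet in $y$ points; it is regular of valency $n_y$. With $N$ the incidence matrix, $N^{\sf T}N=(k-x)I+(y-x)A+xJ$. On $\mathbf{1}^\perp$, $N^{\sf T}N$ has eigenvalues $r-\lambda=k$ (multiplicity $v-1$) and $0$ (multiplicity $b-v=v-2>0$). So, with $d=y-x$, $A$ has the eigenvalues $x/d$ and $-(k-x)/d$. These are rational algebraic integers, hence integers, so $d\mid x$ and $d\mid k$. This kills both examples above.

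You can then finish with your own identities. Put $X=k-2x$ and $Y=2y-k$; the two identities give
\[
n_yYd=kx \quad\text{and}\quad n_xXd=ky.
\]
If $x=0$, then $Y=0$ and $n_x=1$, which is your first case. If $x\ge 1$, then $X,Y>0$, i.e.\ $2x<k<2y$. Writing $x=da$, $y=d(a+1)$, $k=dm$ forces $m=2a+1$. Hence $X=Y=d$, $n_x=(a+1)(2a+1)$ and $n_y=a(2a+1)$. The condition $n_x+n_y=4k-3$ then becomes $(2a+1)(4d-2a-1)=3$. This gives $a=d=1$, so $k=3$ and $(x,y)=(1,2)$.
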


The next lemma is a simple modification of \cite[Lem.\,8.1]{locallyprimitive}.

\begin{lemma}\label{socflagtrans}
Let $\mathcal{D}=(\calP,\calB,\calI)$ be a $G$-locally primitive design with $G$ an almost simple group. Let $ \soc(G)=T $. Then $ T $ is flag-transitive on $\calD$.
\end{lemma}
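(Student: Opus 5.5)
The plan is to follow the pattern of \cite[Lem.\,8.1]{locallyprimitive}: first show that $T$ is transitive on $\calP$ and on $\calB$, and then that $T_\a$ is transitive on $\calD(\a)$. Flag-transitivity of $T$ is equivalent to $T$ being transitive on $\calP$ together with $T_\a$ being transitive on $\calD(\a)$. By Lemma~\ref{loc-2-trans}, $G$ is flag-transitive, and hence transitive on $\calP$ and on $\calB$.

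For the first step, local primitivity says $G_\b$ is primitive on $\calD(\b)$ and $G_\a$ is primitive on $\calD(\a)$. A connectivity argument then gives primitivity of $G$ on $\calP$. If $\Sigma$ were a nontrivial $G$-invariant partition of $\calP$, each block $\b$ would meet the parts of $\Sigma$ in a $G_\b$-invariant partition of $\calD(\b)$. Primitivity of $G_\b$ forces $\calD(\b)$ to lie inside a single part, or to meet each part in at most one point. The first option is impossible: any two points lie in a common block ($\lambda>0$), so the incidence structure is connected and everything would lie in one part. The second option contradicts the fact that two points of the same part share a block. Hence $G$ is primitive on $\calP$, so $1\ne T\lhd G$ is transitive on $\calP$. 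The same argument with $G_\a$ acting on $\calD(\a)$ and a partition of $\calB$ gives transitivity of $T$ on $\calB$. Here one uses that any two blocks are joined by a chain of blocks pairwise sharing a point, which is again connectivity.

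For the second step, $T_\a\lhd G_\a$, so the $T_\a$-orbits on $\calD(\a)$ form a $G_\a$-invariant partition. Primitivity of $G_\a$ on $\calD(\a)$ then leaves two cases: $T_\a$ is transitive on $\calD(\a)$, which is what we want, or $T_\a$ acts trivially on $\calD(\a)$. In the trivial case $T_\a$ fixes every block through $\a$, so it fixes every point sharing a block with $\a$. Since $\lambda>0$, that is every point of $\calP$, so $T_\a=1$. Then $T$ is regular on $\calP$, and the same reasoning on the block side would make $T$ regular on $\calB$ as well.

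The main obstacle is excluding $T_\a=1$, i.e.\ a nonabelian simple $T$ acting regularly on $\calP$ inside a primitive group $G$. By the O'Nan--Scott theorem, a primitive almost simple group cannot have a regular socle: regular nonabelian minimal normal subgroups occur only in diagonal or twisted wreath type, where the socle is not simple. This contradiction shows $T_\a\ne1$, so $T_\a$ is transitive on $\calD(\a)$, and $T$ is flag-transitive.
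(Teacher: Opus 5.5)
There is a genuine gap in your second step. Your proof that $G$ is primitive on $\calP$, and hence that $T$ is transitive on $\calP$, is fine. The trouble is the claim that ``$T_\a$ acts trivially on $\calD(\a)$'' leads to $T_\a=1$. Acting trivially on $\calD(\a)$ only says that $T_\a$ stabilises each block through $\a$ \emph{setwise}, i.e.\ $T_\a\le T_\b$ for every $\b\in\calD(\a)$. It does not make $T_\a$ fix the points of those blocks.

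The paper itself exhibits this. In $\PG_1(d-1,q)$ the unipotent radical $\mathcal{N}_1\lhd G_\a$ (the elations with centre $\a$) stabilises every line through $\a$ while moving the other points on it; see case (a) in the proof of Lemma~\ref{psl}. So $T_\a=1$ does not follow, and information at $\a$ alone is not enough.

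The paper closes the argument by using local primitivity at both ends of a flag. $T_\b\lhd G_\b$ is either transitive or trivial on $\calD(\b)$. If both $T_\a^{\calD(\a)}$ and $T_\b^{\calD(\b)}$ are trivial, then $T_\a\le T_\b\le T_\a$, so $T_\a=T_\b$. Connectivity propagates this equality to all of $\calP\cup\calB$, giving $T_\a=1$, which your O'Nan--Scott remark rules out. Hence $T_\a$ or $T_\b$ is transitive on its neighbourhood.

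To turn ``$T_\b$ transitive on $\calD(\b)$'' into flag-transitivity you need $T$ transitive on $\calB$, and your justification of that does not work. Dualise your partition argument: primitivity of $G_\a$ on $\calD(\a)$ leaves the option that $\calD(\a)$ meets every part of a $G$-invariant partition of $\calB$ in at most one block. This is not contradictory, because blocks in the same part need not meet. For example, the parallel classes of $\AG_1(d,q)$ form a nontrivial $\AGL_d(q)$-invariant partition of the lines, so your argument would ``prove'' primitivity on $\calB$ there.

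For the partition of $\calB$ into $T$-orbits, this leftover option is exactly the configuration you must exclude. There every $T$-orbit on $\calB$ is a parallel class, and $T_\a$ fixes the unique block of each orbit through $\a$, so $T_\a$ is trivial on $\calD(\a)$. The paper obtains transitivity of $T$ on $\calB$ from the quasiprimitivity of $G$ on $\calB$ given by \cite[Thm.\,1.3]{locallyprimitive}.

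Once you have that, your one-sided route can also be repaired:
\begin{itemize}
\item $T_\a\le T_\b$ gives $|T|/v\le |T|/b$, so Fisher's inequality forces $b=v$ and $T_\a=T_\b$.
\item The hypothesis is $G$-invariant, so it holds at every point. Hence $T_{\a'}\le T_\b$ with $|T_{\a'}|=|T_\b|$, i.e.\ $T_{\a'}=T_\b=T_\a$, for all $\a'\in\calD(\b)$.
\item Connectivity then gives $T_\a=1$.
\end{itemize}
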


\begin{proof}
By \cite[Thm.\,1.3]{locallyprimitive}, $G$ is primitive on $ \calP $ and quasiprimitive on $\calB$, so $T$ is transitive on both $\calP$ and $\calB$. Let $ (\alpha,\b) $ be a flag. Suppose that $T_\a$ fixes all elements of $\calD(\a)$ and $T_\b$ fixes all elements of $\calD(\b)$. Then $T_\b\le T_\a$ and $T_\a\le T_\b$, so that, $T_\a=T_\b$. It follows that $T_\a$ fixes all elements of $\calP\cup\calB$, and so $T_\a=1$. This is impossible since $T$ is not regular on $\calP$. Thus, either $T_\a^{\calD(\a)}\not=1$ or $T_\b^{\calD(\b)}\not=1$. Since $G_\a^{\calD(\a)}$ and $G_\b^{\calD(\b)}$ are primitive permutation groups, it follows that either $T_\a^{\calD(\a)}$ or $T_\b^{\calD(\b)}$ is transitive, and so $T$ is flag-transitive on $\calD$.
\end{proof}

\section{Infinite families}\label{sec-exam}
In this section we construct the infinite families of locally 2-homogeneous designs.

\begin{example}\label{ex:Pro-G} 
{\rm 
The following designs come from the linear space $V=\FF_q^d$, $ d\geq 3 $. Let $G=\SL_d(q)$ and $\calM_{i,j}$ be the set of $i \times j$-matrices over $\FF_q$. For a $ k $-subspace $ W<V $, the stabilizer of $ W $ in $ G $ is known as a parabolic subgroup $P_k$ of form 
\[{\renewcommand{\arraystretch}{1.4}
\begin{array}{lll}
P_k&=&\left<
\left[\begin{array}{cc}
A&0\\ C&D
\end{array}\right]
\Bigg|\ A\in\GL_k(q),\ D\in\GL_{d-k}(q),\ |AD|=1,\ C\in \calM_{k,d-k} \right>\\
&\cong&[q^{k(d-k)}]{:}((\SL_k(q)\times\SL_{d-k}(q)).(q-1)),
\end{array}}\]
	
\noindent {\bf (1)} {\bf The projective space} $\PG(d-1,q)=(\calP,\calH,\calI)$ is defined as follows\,:
\begin{itemize}
\item[] $\calP$\,: the set of $ 1 $-subspaces of $V$,\vs
\item[] $\calH$\,: the set of hyperplanes of $V$,\vs
\item[] $\a\,\calI\,\b \Leftrightarrow \a\subset\b$, for $ \a\in\calP $, $ \b\in\calH $.
\end{itemize}

Let $ \calD=\PG(d-1,q) $. It is known that $ \calD $ is a symmetric $ 2 $-$ (\frac{q^{d}-1}{q-1}, \frac{q^{d-1}-1}{q-1}, \frac{q^{d-2}-1}{q-1}) $ design with $ \Aut(\calD)=\PGammaL_d(q) $. Note that $ G^\calP\cong\PSL_d(q)\leqslant\Aut(\calD) $, and further, $ G^\calP $ is 2-transitive on both $ \calP $ and $ \calH $. Let $(\a,\b)\in \calP\times\calH$ be a flag. Then $$G_{\a}\cong G_{\b} \cong q^{d-1}{:}\GL_{d-1}(q),$$ and	$ G_{\a\b}=G_\a\cap G_\b $ is of form \begin{align*}
G_{\a\b}&=\left<
\left[\begin{array}{ccc}
A&0&0\\
B&C&0\\
D&E&F
\end{array}\right]
\Bigg|\,A,F\in\GL_1(q),\,C\in\GL_{d-2}(q),\, B,E^{\sf T}\in\calM_{d-2,1},\,D\in \calM_{1,1}\right>\\
&\cong q^{d-1}{:}((q^{d-2}{:}(\GL_1(q)\times\GL_{d-2}(q)))), \mbox{\ where $ |A|\cdot|C|\cdot |F|=1 $.}
\end{align*}
\noindent Thus, $|\calD(\a)|=|G_\a:G_{\a\b}|=\frac{q^{d-1}-1}{q-1} $, $|\calD(\b)|=|G_\b:G_{\a\b}|=\frac{q^{d-1}-1}{q-1} $, and so $ G $ is flag-transitive (locally transitive) on $ \calD $. Further, $G_\b^{\calD(\b)}=\PGL(\b)\cong\PGL_{d-1}(q)$ is 2-transitive on $ \calD(\b) $. Similarly, $G_\a^{\calD(\a)}\cong\PGL_{d-1}(q)$ is $ 2 $-transitive on $\calD(\a)$. It follows that $ \calD $ is $ K $-locally $ 2 $-transitive for each $ K $ with $ \PSL_d(q)\lhd K\leqslant \PGammaL_d(q) $. \vs\vs

\noindent {\bf(2)} {\bf The complement}  $\overline{\PG(d-1,q)}=(\calP,\calH,\calI')$ is defined as follows\,:
\begin{itemize}
\item[] $\calP$, $\calH$\,: as in {\bf(1)}, and $\a\,\calI'\,\b \Leftrightarrow \a\not\subset\b$, for $ \a\in\calP $, $ \b\in\calH $. \vskip1pt
\end{itemize}

Let $ \calD=\overline{\PG(d-1,q)} $. It is known that $ \calD $ a symmetric $ 2 $-$ (\frac{q^{d}-1}{q-1},q^{d-1}, (q-1)q^{d-2}) $ design with $ \Aut(\calD)=\PGammaL_d(q) $. Note that $ G^\calP\cong\PSL_d(q)\leqslant\Aut(\calD) $, and further, $ G^\calP $ is 2-transitive on both $ \calP $ and $ \calH $. Let $(\a,\b)\in \calP\times\calH$ be a flag. Then $$G_{\a}\cong G_{\b} \cong q^{d-1}{:}\GL_{d-1}(q),$$ and $ G_{\a\b}=G_\a\cap G_\b $ is of form
\begin{align*}
G_{\a\b}&=\left<
\left[\begin{array}{cc}
A&0\\0&x
\end{array}\right]
\Bigg|\ A\in\GL_{d-1}(q),\ x=|A|^{-1}
\right>\cong\GL_{d-1}(q).
\end{align*}
\noindent Thus, $|\calD(\a)|=|G_\a:G_{\a\b}|=q^{d-1} $, $|\calD(\b)|=|G_\b:G_{\a\b}|=q^{d-1}  $, and so $ G $ is flag-transitive (locally transitive) on $ \calD $. Further, $G_\b^{\calD(\b)}\cong\AGL(\b)\cong q^{d-1}{:}\GL_{d-1}(q) $ is 2-transitive on $ \calD(\b) $. Similarly, $G_\a^{\calD(\a)}\cong q^{d-1}{:}\GL_{d-1}(q)$ is $ 2 $-transitive on $\calD(\a)$. It follows that $ \calD $ is $ K $-locally $ 2 $-transitive for $ \PSL_d(q)\lhd K\leqslant \PGammaL_d(q) $. \vs\vs
		
\noindent {\bf (3)} {\bf The design} $\PG_1(d-1,q)=(\calP,\calB,\calI)$, $ d>3 $, is defined as follows\,:\begin{itemize}
\item[] $\calP$\,: the set of $ 1 $-subspaces of $V$,\vs
\item[] $\calB$\,: the set of $ 2 $-subspaces of $V$,\vs
\item[] $\a\,\calI\,\b \Leftrightarrow \a\subset\b$, for $ \a\in\calP $, $ \b\in\calB $.
\end{itemize}

Let $ \calD=\PG_1(d-1,q) $. It is easy to check that\,: $ \calD $ is a non-symmetric $ 2 $-design with parameters $(v,b,r,k,\lambda)=(\frac{q^d-1}{q-1},\frac{(q^d-1)(q^{d-1}-1)}{(q^2-1)(q-1)},\frac{q^{d-1}-1}{q-1},\frac{q^{2}-1}{q-1},1)$, and $ \Aut(\calD)=\PGammaL_d(q) $. Note that $ G^\calP\cong\PSL_d(q)\leqslant\Aut(\calD) $, and further, $ G^\calP $ is 2-transitive on $ \calP $ and transitive on $ \calB $. Let $(\a,\b)\in\calP\times\calB$ be a flag. 
Then
\[G_\a=q^{d-1}{:}\GL_{d-1}(q), \ G_\b=[q^{2(d-2)}]{:}((\SL_2(q)\times\SL_{d-2}(q)).(q-1)),\] and $ G_{\a\b}=G_\a\cap G_\b $ is of form 
\begin{align*}
G_{\a\b}&=\left<
\left[\begin{array}{ccc}
A&0&0\\ B&C&0\\ D&E&F
\end{array}\right]
\Bigg| \ A,C\in\GL_1(q), \ B\in\calM_{1,1}, \ D,E\in\calM_{d-2,1},\ F\in\GL_{d-2}(q)
\right>\\
&\cong{q}^{d-1}{:}(q^{d-2}{:}(\GL_1(q)\times\GL_{d-2}(q))),\ \mbox{where $ |A|\cdot |C|\cdot|F|=1 $.}
\end{align*}}

\noindent Thus, $ |{\calD(\a)}|=|G_\a:G_{\a\b}|=\frac{q^{d-1}-1}{q-1} $, $|\calD(\b)|=|G_\b:G_{\a\b}|=\frac{q^{2}-1}{q-1} $, and so $ G $ is flag-transitive (locally transitive) on $ \calD $. Further, $G_\a^{\calD(\a)}\cong\PGL_{d-1}(q) $ is 2-transitive on $ \calD(\a) $, and $G_\b^{\calD(\b)}=\PGL(\b)\cong\PGL_{2}(q) $ is 2-transitive on $ {\calD(\b)} $. It follows that $ \calD $ is $ K $-locally $ 2 $-transitive for each $ K $ with $ \PSL_d(q)\lhd K\leqslant \PGammaL_d(q) $.\vs

\end{example}

\begin{example}\label{ex:psu3q}
{\rm
Let $V=\FF_{q^2}^3$ and $(V,B)$ be a non-degenerate unitary space. The \textit{Hermitian Unitary} $U_H(q)=(\calP,\calB,\calI)$ is defined as follows\,:
\begin{itemize}
\item[] $\calP$\,: the set of isotropic $1$-subspaces of $V$,\vs
\item[] $\calB$\,: the set of non-degenerate $2$-subspaces of $V$,\vs
\item[] $\a\,\calI\,\b \Leftrightarrow \a\subset\b$, for $ \a\in\calP $, $ \b\in\calB $.
\end{itemize}

Let $ \calD=U_H(q) $. Since $ (V,B) $ is of Witt index $ 1 $, for any distinct $ \a,\a'\in\calP $, $\langle \a,\a' \rangle\in\calB $, so that, $ \calD $ is a $ 2 $-design (a linear space) with $ \lambda=1 $. By simple calculation the parameters of $ \calD $ are $(v,b,r,k,\lambda)=(q^3+1,q^2(q^2-q+1),q^2,q+1,1)$.\vs

To clarify the situation, fix a basis of $ V $ as $ u_1=e,\,u_2=d,\,u_3=f $ such that the matrix of $ B $ with respect to this basis is $$B=[B(u_i,u_j)]=\begin{bmatrix}
	0 & 0 & 1 \\ 0 & 1 & 0 \\ 1 & 0 & 0
\end{bmatrix}. $$

Let $ G=\GU_3(q) $, the unitary group consisting of matrices $M$ such that $M B \overline{M}^{\sf T}=B$, where $\overline{M}=[m_{ij}^q]$ and $M^{\sf T}$ is the transpose of $ M $. Then $ G^\calP\cong\PGU_3(q) \leqslant\Aut(\calD) $, and, by Witt's Lemma, $ G^\calP $ is $ 2 $-transitive on $ \calP $ and transitive on $ \calB $.

Let $(\a,\b)$ be a flag, where $\a=\langle e \rangle$ and $\b=\langle e ,f \rangle$.  Note that $\langle e \rangle^\bot = \langle e,d \rangle$. For any $M \in G_{\a}$, it fixes the flag $0<\langle e \rangle < \langle e,d \rangle <V$, thus of shape
$$M=\begin{bmatrix}
m_{11} & 0 & 0 \\ 
m_{21} & m_{22} & 0 \\
m_{31} & m_{32} & m_{33}
\end{bmatrix}. $$
Then $G_{\a}=Q \rtimes L$, where $Q$, $ L $, respectively, consists of matrices $ M_1 $, $M_2$ of shape
$$M_1=\begin{bmatrix}
1 & 0 & 0 \\ 
m_{21} & 1 & 0 \\ 
m_{31} & m_{32} & 1
\end{bmatrix},\,\,
M_2=\begin{bmatrix}
m_{11} & 0 & 0 \\	
0 & m_{22} & 0 \\	
0 & 0 & m_{33}\end{bmatrix}, $$
\noindent and, in particular, the derived group $Q'\triangleleft Q$ consists of matrices $ M_3 $ of shape
$$M_3=\begin{bmatrix}
1 & 0 & 0 \\ 0 & 1 & 0 \\ m_{31} & 0 & 1
\end{bmatrix}.$$

Since
\begin{align*}
M_1 B \overline{M_1}^T & = 
\begin{bmatrix}
1 & 0 & 0 \\
m_{21} & 1 & 0 \\
m_{31} & m_{32} & 1
\end{bmatrix}
\begin{bmatrix}
0 & 0 & 1 \\
0 & 1 & 0 \\
1 & 0 & 0
\end{bmatrix}
\begin{bmatrix}
1 & m_{21}^q & m_{31}^q  \\
0 & 1 & m_{32}^q \\
0 & 0 & 1
\end{bmatrix}\\
& = \begin{bmatrix}
0 & 0 & 1  \\
0 & 1 & m_{21}+m_{32}^q \\
1 & m_{21}^q+m_{32} & m_{31}^q+m_{32}m_{32}^q+m_{31}
\end{bmatrix}
=\begin{bmatrix}
0 & 0 & 1  \\ 0 & 1 & 0 \\ 1 & 0 & 0
\end{bmatrix}=B,
\end{align*}
we have $$ m_{21}+m_{32}^q=0,\mbox{\ and\ }  m_{31}^q+m_{32}m_{32}^q+m_{31}=0. $$ Note that the \textit{trace} map ${\rm Tr}: \FF_{q^2} \rightarrow \FF_{q},\, x
\mapsto x+x^q $ is surjective. Thus, $ |Q|=q^2\cdot q=q^3 $. In particular, considering $ m_{21}=m_{32}=0 $, we have $Q'\cong\E_{q}$. Moreover, each element $ M_1Q'\in Q/Q' $ is of order $ p $ and can be regarded as the entry $ m_{21} $. It follows that $Q/Q'\cong\E_{q^2}$. Meanwhile, since $M_2 B \overline{M_2}^T=B$, we have $$m_{33}=m_{11}^{-q}\in \FF^{\times}_{q^2},\mbox{\ and\ } m_{22}^{1+q}=1,$$ so that, $L\cong \GL_1(q^2) \times \GU_1(q)$. Therefore, $$G_{\a}=Q \rtimes L \cong (q.q^{2}):(\GL_1(q^2) \times \GU_1(q)).$$

On the other hand, any $ M\in G_\b $ also fixes $\b^\bot=\langle d \rangle$, so $ G_\b $ consists of matrices $ M_4 $ of shape
$$M_4=\begin{bmatrix}
m_{11}& 0 & m_{13} \\
0 & m_{22} & 0 \\
m_{31} & 0 & m_{33}
\end{bmatrix}, $$
where $\begin{bmatrix}
m_{11} &  m_{13} \\	
m_{31} &  m_{33}
\end{bmatrix}\in \GU_2(q)$, and  
since $M_4 B \overline{M_4}^T=B$, we have $m_{22}^{1+q}=1$. Hence $$G_{\b}=\GU_2(q) \times \GU_1(q).$$

\noindent Suppose further $M_4 \in G_{\a}$. Then $m_{13}=0$, so that, $$G_{\a\b}=G_\a\cap G_\b=Q' \rtimes L\cong q{:}(\GL_1(q^2) \times \GU_1(q)).$$

Thus, $|\calD(\a)|=|G_\a:G_{\a\b}|=q^2 $, $|\calD(\b)|=|G_\b:G_{\a\b}|=q+1 $, and so $ G $ is flag-transitive (locally-transitive) on $ \calD $. Further, $G_\a$ acts on ${\calD(\a)} $ with kernel $ Q'\times\Z(\GU_3(q)) $, so $G_\a^{\calD(\a)}\cong q^{2}{:}\GL_1(q^2) $ is 2-transitive on $ {\calD(\a)} $\,; $G_\b$ acts on ${\calD(\b)} $ with kernel $\Z(\GU_2(q))\times\Z(\GU_1(q)) $, so $G_\b^{\calD(\b)}\cong\PGU_2(q)\cong\PGL_2(q)$ is 2-transitive on $ {\calD(\b)} $. Hence the group $ G^\calP\cong\PGU_3(q) $ is locally 2-transitive on $ \calD $.\vs
}
\end{example}

\begin{example} \label{2^d_design}
{\rm According to \cite{2^d_design}, for each $ m\geq 2 $, there exist two complementary symmetric $ 2 $\,-\,$ (2^{2m},2^{2m-1} \pm 2^{m-1},2^{2m-2}\pm 2^{m-1}) $ designs $\calD=\scrS^{\pm}(2m)$, where $\Aut(\calD)=\ZZ_2^{2m}{:}\Sp_{2m}(2)$ is 2-transitive on both $ \calP$ and $\calB $, and further, letting $ G=\Aut(\calD) $ and $(\a,\b)\in\calP\times\calB$ be a flag, then\,:
\begin{align*}
&G = \mathbb{Z}_2^{2m}{:}\Sp_{2m}(2), \\
&G_{\a} \cong G_\b \cong \Sp_{2m}(2), \\	&G_{\alpha\beta} \cong \GO^\pm_{2m}(2).
\end{align*}	
Thus, in both cases, $|\calD(\a)|=|G_\a:G_{\a\b}|$, $|\calD(\b)|=|G_\b:G_{\a\b}| $, and so $ G $ is flag-transitive (locally transitive) on $ \calD $. Further, $ G_\a^{\calD(\a)}\cong G_\a\cong\Sp_{2m}(2) $ is 2-transitive on $ \calD(\a) $ (since it is with stabilizer $ \GO^\pm_{2m}(2) $). Similarly, $ G_\b^{\calD(\b)}\cong G_\b $ is 2-transitive on $ \calD(\b)$. Hence $ G $ is locally 2-transitive on $ \calD $.\vs
}
\end{example}

\begin{example}\label{AG}
{\rm Let $V=\mathbb{F}_q^d$, $ d\geq 2 $. The {\it affine space} $\AG_i(d,q)$ is defined as follows\,:
\begin{itemize}
\item[] $\calP=\{ v\, |\, v \in V \}$,\vs
\item[] $\calB_i= \{U+v \ |\ U<V \text{ is an $i$-subspace\ and\ } v \in V\}$,\vs
\item[] $\AG_i(d,q)=(\calP,\calB_i,\calI)$,\, $\a\,\calI\,\b \Leftrightarrow \a\subset\b$, for $ \a\in\calP $, $ \b\in\calB_i $.
\end{itemize}

Let $ \calD=\AG_i(d,q) $. Note that each block is a coset of some $i$-subspace of $V$. By \cite[Exam.\,4.2]{locallyprimitive}, $\calD$ is a locally primitive design with $\Aut(\calD)=\A\GammaL_d(q)$. Let $ G=\ASL_d(q)\leqslant\Aut(\calD) $. We consider the special case $ i=1 $ or $ d-1 $.
		
{\bf (1)} For $\calD=\AG_1(d,q)$, $(v,b,r,k,\lambda)=(q^d,\frac{q^{d-1}(q^d-1)}{q-1},\frac{q^d-1}{q-1},q,1)$. Let $ (\a,\b) $ be a flag, where $ \a={\0}\in V $ and $ \b=U< V $. Then we have
\begin{align*}
&G = \ASL_d(q), \\
&G_{\alpha} \cong \SL_d(q), \\
&G_{\beta} \cong q{:}(q^{d-1}{:}\GL_{d-1}(q)),\\
&G_{\alpha\beta}\cong q^{d-1}{:}\GL_{d-1}(q),
\end{align*}
\noindent where $ G_\b $ contains the translations with respect to $ U $, as well as the stabilizer $ \SL_d(q)_U $. Note that $ G $ is  locally transitive on $ \calD $, since $ |{\calD(\a)}|=|G_\a:G_{\a\b}|=\frac{q^d-1}{q-1} $, and $|\calD(\b)|=|G_\b:G_{\a\b}|=q $. Further, $G_\a^{\calD(\a)}\cong\PSL_{d}(q) $ is 2-transitive on $ \calD(\a) $\,; $G_\b^{\calD(\b)}=\AGL(U)\cong\AGL_{1}(q) $ is 2-transitive on $ {\calD(\b)} $. It follows that $ \calD $ is $ K $-locally $ 2 $-transitive for each $ K $ with $ \ASL_d(q)\lhd K\leqslant \AGammaL_d(q) $. We remark that, if $ q=2 $, then $ \binom{v}{k}=\binom{2^d}{2}=2^{d-1}(2^d-1)=b $, and so $ \calD $ is trivial.\vs

{\bf (2)} For $\calD=\AG_{d-1}(d,q)$,
$(v,b,r,k,\lambda)=(q^d,\frac{q(q^d-1)}{q-1},\frac{q^d-1}{q-1},q^{d-1},\frac{q^{d-1}-1}{q-1})$. Let $ (\a,\b) $ be a flag, where $ \a={\0}\in V $ and $ \b=U<V $. Then we have
\begin{align*}
&G = \ASL_d(q), \\
&G_{\alpha} \cong \SL_d(q), \\
&G_{\beta} \cong q^{d-1}{:}(q^{d-1}{:}\GL_{d-1}(q)),\\
&G_{\alpha\beta} \cong q^{d-1}{:}\GL_{d-1}(q),
\end{align*}
\noindent and, similarly, $G_\a^{\calD(\a)}\cong\PSL_{d}(q) $ is 2-transitive on $ \calD(\a) $, and $G_\b^{\calD(\b)}=\AGL(U)\cong\AGL_{d-1}(q) $ is 2-transitive on $ {\calD(\b)} $. Hence $ \calD $ is $ K $-locally $ 2 $-transitive for each $ K $ with $ \ASL_d(q)\lhd K\leqslant \AGammaL_d(q) $.
}
\end{example}

\section{The symmetric case} \label{sec-Kantor}
In this section we study case {\bf(A)}. The symmetric 2-transitive designs were determined by Kantor \cite{Kantor2tr}. We find that there is only one single design among them that is not locally $ 2 $-homogeneous.

\begin{proposition}\label{sym}
Let $\calD=(\calP,\calB,\calI)$ be a symmetric $2$-$(v,k,\lambda)$ design with $ G\leqslant\Aut(\calD) $ being $ 2 $-transitive on $ \calP $. Then either {\rm(1)} or {\rm(2)} occurs.\vs
\noindent {\rm(1)} $G$ is locally $2$-transitive on $\calD$. One of the following holds{\rm\,:}
\begin{itemize}
	\item[\rm(1.1)] $\calD=\PG(d-1,q)$ or $\overline{\PG(d-1,q)}$, and $\PSL_d(q)\lhd G \leqslant \PGammaL_d(q)${\rm\,;}
	\vskip1pt
	\item[\rm(1.2)] $\calD=\scrS^{\pm}(2m)$, and $G=2^{2m}{:}\Sp_{2m}(2)${\rm\,;}
	\vskip1pt
	\item[\rm(1.3)] $ \calD $ and $ (G,G_\a,G_{\a\b},G_\b) $ are listed in {\rm Table \ref{table_sym}}, with $(\a,\b)\in\calP\times\calB$ a flag.
\end{itemize}
\noindent {\rm(2)}  $G$ is not locally $2$-homogeneous on $\calD$. One of the following holds{\rm\,:}
\begin{itemize}
	\item[\rm(2.1)] $\calD=\scrS^{\pm}(2m)$, and $G=q^6{:}\G_2(q).\calO < 2^{2m}{:}\Sp_{2m}(2)$, where $q^6=2^{2m}$, $\calO\leqslant \langle \phi \rangle \cong \ZZ_{\frac{m}{3}}$, and $\phi$ is the field automorphism{\rm\,;}
	\vskip1pt
	\item[\rm(2.2)] $\calD$ is the unique $ 2 $-$ (176,50,14) $ design.
\end{itemize}
\end{proposition}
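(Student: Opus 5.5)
The plan is to start from Kantor's classification of 2-transitive symmetric designs \cite{Kantor2tr}. It says that if $G\leqslant\Aut(\calD)$ is 2-transitive on the points of a symmetric design, then (up to complementation where relevant) $\calD$ and $G$ fall into one of the following cases:
\begin{itemize}
\item[(a)] $\calD=\PG(d-1,q)$ or its complement, with $\PSL_d(q)\lhd G\leqslant\PGammaL_d(q)$;
\item[(b)] $\calD$ is the Hadamard design on $11$ points or its complement, with $G=\PSL_2(11)$;
\item[(c)] $\calD=\PG(3,2)$ or its complement, with $G=\A_7$;
\item[(d)] $\calD=\scrS^{\pm}(2m)$, with $G=2^{2m}{:}G_0$ and $G_0$ a 2-transitive-inducing subgroup of $\Sp_{2m}(2)$, namely $\Sp_{2m}(2)$, $\G_2(q)'.\calO$ with $q^3=2^m$, or the small exceptions $2^4{:}\A_6$, $2^4{:}\A_7$, $2^6{:}\PSU_3(3).\calO$;
\item[(e)] the $2$-$(176,50,14)$ design or its complement, with $G=\HS$.
\end{itemize}
Since $v=b$ and, for symmetric designs, $G$ is 2-transitive on $\calP$ if and only if it is 2-transitive on $\calB$ (Kantor), the point and block actions are symmetric. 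It therefore suffices, for each case, to identify $G_\a$, $G_{\a\b}$ and $G_\b$ and decide whether $G_\b$ is 2-transitive (or 2-homogeneous) on $\calD(\b)$, and dually for $G_\a$.

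Family (a) is already handled in Example \ref{ex:Pro-G}(1),(2), which gives (1.1); the $\Sp_{2m}(2)$ case of (d) is Example \ref{2^d_design}, which gives (1.2). For the sporadic cases (b), (c), (e) and the small groups in (d), I will use the explicit subgroup data: $G_\a\cong G_\b$ and the index-$k$ subgroup $G_{\a\b}$. I check 2-transitivity of the coset action $G_\b$ on $[G_\b:G_{\a\b}]$ either from known facts or by the {\sc Magma} procedure after Construction \ref{cons}. Some examples:
\begin{itemize}
\item For $\PSL_2(11)$, $G_\b=\A_5$ acting on $5$ or $6$ points is 2-transitive.
\item For $\A_7$, $\PSL_2(7)$ acting on $7$ or $8$ points is 2-transitive.
\item For $2^4{:}\A_6$, $\A_6$ acting on $6$ or $10$ points is 2-transitive.
\item For $\HS$ on the $126$-point design, $\PSU_3(5){:}2$ acting on $126$ points is 2-transitive.
\end{itemize}
These fill Table \ref{table_sym}.

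For the negative cases (2.1) and (2.2) I will compare degrees and orders. In the $\HS$ $50$-point case, $G_\b=\PSU_3(5){:}2$ has no 2-homogeneous action of degree $50$: its 2-transitive actions have degree $126$ only, and $|G_\b|=2^5\cdot3^2\cdot5^3\cdot7$ must be divisible by $\binom{50}{2}=5^2\cdot7^2$, which it is not. In the $\G_2(q)$ case, $G_\a=\G_2(q).\calO$ would have to act 2-homogeneously on $k=2^{2m-1}\pm2^{m-1}=q^3(q^3\pm1)/2$ points. By Lemma \ref{2-homo} and the list of 2-transitive almost simple groups, $\G_2(q)$ has 2-transitive actions only of degree $28$ (for $q=3$, which is excluded here since $q$ is even) or via $\G_2(2)'\cong\PSU_3(3)$ on $28$ points. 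Since $q^3(q^3\pm1)/2$ is not of this form except when $q=2$, where the $2^6{:}\G_2(2)'$ case coincides with the $\PSU_3(3)$ entry, these are not locally 2-homogeneous. For $\A_7$ inside the $\scrS$ family on $16$ points, the one with $k=6$ or $10$ is the $\A_6$ case just treated, and the relevant $\A_7$ case is the $\AG_3(4,2)$-type, which does not arise as a symmetric entry; I will check this by order considerations.

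The main obstacle I expect is getting Kantor's list and the precise subgroups in family (d) exactly right: the $q^6{:}\G_2(q)$ groups with their outer part $\calO$, and the distinction between $\PSU_3(3)$ and $\PSU_3(3){:}2$ at $m=3$. In particular I must verify which of $\scrS^{+}(6)$ and $\scrS^{-}(6)$ is locally 2-transitive under $2^6{:}\PSU_3(3)$: $k=28$ works via the 2-transitive action of $\PSU_3(3)$ on $28$ points, while $k=36$ does not. I will settle these small cases by direct {\sc Magma} computation using Construction \ref{cons}.
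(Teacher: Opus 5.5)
Your outline is the paper's own route: take Kantor's list, settle $\PG(d-1,q)$ and $2^{2m}{:}\Sp_{2m}(2)$ by Examples~\ref{ex:Pro-G} and~\ref{2^d_design}, check the local actions of the remaining groups one by one, and exclude the rest by degree/order arguments. Your divisibility argument for the $(176,50,14)$ design is a neat, more elementary substitute for the paper's appeal to the 2-homogeneous actions of $\PSU_3(5){:}2$: $\binom{50}{2}=5^2\cdot 7^2$ does not divide $|\PSU_3(5){:}2|=2^5\cdot 3^2\cdot 5^3\cdot 7$.

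\textbf{The gap.} Your list of groups $G_0$ in family (d) is incomplete, and the paper's proof has the same omission (its sentence ``either $v=2^4$ and $G_\a=\A_6$, or $\G_2(q)'\lhd G_\a$''). By Hering's classification \cite[Appendix~1]{ha2trans}, the subgroups $G_0\le\Sp_{2m}(2)$ transitive on $V\setminus\{\0\}$ also include the field-extension groups $\Sp_{2a}(2^e)\lhd G_0\le\Sp_{2a}(2^e){:}e$ with $ae=m$, $e>1$. These preserve $f={\rm Tr}\circ f'$, so each $2^{2m}{:}G_0$ is 2-transitive on the points of $\scrS^{\pm}(2m)$.

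Some of these groups are locally 2-transitive. Take $a=1$, $q=2^m$, and the minus-type form ${\rm Tr}\circ Q''$ with $Q''$ anisotropic over $\FF_q$.
\begin{itemize}
\item Its stabiliser in $\SL_2(q)$ is $\GO(Q'')\cong\D_{2(q+1)}$, because ${\rm Tr}(\ell(x)^2)\equiv 0$ forces $\ell=0$.
\item Hence $\SL_2(q)$ has an orbit of length $q(q-1)/2=2^{m-1}(2^m-1)=k$, i.e.\ it is transitive on $\calD(\0)$ for $\scrS^-(2m)$.
\end{itemize}
For $m=2$ this is $\SL_2(4)\cong\A_5$ acting on $6$ points, hence 2-transitively. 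Since $V$ is regular on $\calB$, $G_\b\cong\A_5$ is a complement to $V$; it is transitive on the $6$ points of $\b$, hence 2-transitive. So $(G,G_\a,G_{\a\b},G_\b)=(2^4{:}\A_5,\A_5,\D_{10},\A_5)$, i.e.\ $\ASL_2(4)$, is locally 2-transitive on $\scrS^-(4)$, and likewise $(2^4{:}\S_5,\S_5,5{:}4,\S_5)$. Neither appears in (1.1)--(1.3).

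For $m=3$, $G_\0=\SL_2(8){:}3\cong\PGammaL_2(8)\cong\Ree(3)$ acts on the $28$ blocks through $\0$ with stabiliser of order $54$. That is its 2-transitive unital action, so $2^6{:}(\SL_2(8){:}3)$ is locally 2-transitive on $\scrS^-(6)$ with $G_{\a\b}\cong 9{:}6$. Your aside that $\G_2(q)$ has a 2-transitive action of degree $28$ ``for $q=3$'' confuses $\G_2(3)$, which has none, with exactly this group $\Ree(3)$.

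Conversely, $2^4{:}\A_5$ on $\scrS^+(4)$, $2^6{:}\SL_2(8)$, and $2^{2m}{:}\SL_2(2^m)$ for $m\ge 4$ are 2-transitive on points but not locally 2-homogeneous, and they are missing from (2). So both the case split you start from and the statement as written are incomplete.

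\textbf{Repair.} Take $G_0$ from Hering's list intersected with $\Sp_{2m}(2)$, and test each on $[\Sp_{2m}(2):\GO^{\pm}_{2m}(2)]$. For $a\ge 2$, $e>1$, $\PSp_{2a}(2^e)$ has no 2-transitive action. For $a=1$, $m\ge 4$, the degree $k$ exceeds $2^m+1$. So $2^4{:}\A_5$, $2^4{:}\S_5$ and $2^6{:}\PGammaL_2(8)$ are the only new locally 2-transitive examples.

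\textbf{Minor point.} $2^4{:}\A_7$ does not belong in (d) at all, since $|\A_7|>|\Sp_4(2)|$. That is the clean reason to discard it, rather than the appeal to $\AG_3(4,2)$.
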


\begin{proof}
First, $v\ne2k$, as if not, $\lambda(2k-1)=\lambda(v-1)=k(k-1)$ and $k(k-1)|\lambda$, not possible. Thus, either $ v>2k $\,; or $v<2k$, $v>2(v-k)$. According to Kantor \cite{Kantor2tr}, either $ \calD $ or the complement $ \overline\calD $ is one of\,:\vskip1pt
- the projective space $ \PG(d-1,q) $,\vskip1pt 
- the unique Hadamard design with $(v,k,\lambda)=(11,5,2) $,\vskip1pt 
- the unique design with $(v,k,\lambda)=(176,50,14) $,\vskip1pt 
- the design $ \scrS^{-}(2m) $ given in \cite{2^d_design} on $ 2^{2m} $ points, \vskip1pt
\noindent where, respectively, $ \Aut(\calD) $ is $ \PGammaL(d,q) $, $ \PSL(2,11) $, $ \HS $ and $ 2^{2m}{:}\Sp_{2m}(2) $. Moreover, each of these designs is \textit{flag-transitive} and \textit{self-dual} (their incidence graphs are known to be \textit{distance-transitive}). 
Further, if $G$ is almost simple, then the first three cases occur;
if $G$ is an affine group, then the last case occurs.

By Theorem 4.3 of \cite{design}, for a $G$-point transitive symmetric design, the rank of $G$ on points equals the rank of $G$ on blocks.
We remark that Kantor's result relies on 
an important observation of this, that is, $ G $ is not only $ 2 $-transitive on $ \calP $ but also $ 2 $-transitive on $ \calB $, and, further, these two $ 2 $-transitive permutation representations are inequivalent.\vs 

{\bf(a)} Let $\calD=\PG(d-1,q)$ or $\overline{\PG(d-1,q)}$, where $ \Aut(\calD)=\PGammaL(d,q) $. By Example \ref{ex:Pro-G}, $ \calD $ is $G$-locally $ 2 $-transitive for $\PSL_d(q)\lhd G \leqslant \PGammaL_d(q)$. This gives part (1.1). 

We consider other possibilities for $ G $. Note that $ G $ is an almost simple group that has two inequivalent $2$-transitive representations of degree $ v $ with $\soc(G)\ne\PSL_d(q)$. Let $ \soc(G)=T $. By checking the finite 2-transitive groups in \cite[Thm.\,5.3]{as2trans}, we have that $ T $ also has two inequivalent $2$-transitive representations of degree $ v $, and consequently, $ T $ is one of\,: \vs
$\A_6\, (v=6),\ \PSL_2(11)\, (v=11),\ \M_{12}\, (v=12),\ \A_7\, (v=15),\ \HS\,(v=176)$.\vs
\noindent It is easy to check that $\frac{q^d-1}{q-1}\ne 6, 11, 12, 176$ for prime power $ q $ and $ d>2 $. For the particular case $\frac{2^4-1}{2-1}=15$, we have $ G=\A_7<\PGL_4(2) $. By $ \MA $, there exist subgroups $ H,K\leqslant G $ such that \[\mbox{$ H\cong K\cong \PSL_2(7) $ and $ H\cap K\cong \S_4 $ or $ 7{:}3 $.}\] The triple $ (G,H,K) $ satisfies the conditions of Construction \ref{cons}, thereby giving rise to a $ G $-locally $ 2 $-transitive symmetric design with $ (v,k,\lambda)=(15,7,3) $ or $ (15,8,4) $. By Kantor's list, it is either $\PG(3,2)$ or $\overline{\PG(3,2)}$. This gives Row 2 of Table \ref{table_sym}.\vs

{\bf(b)} Let $ \calD=\scrS^{-1}(2m) $, where $\Aut(\calD)=2^{2m}{:}\Sp_{2m}(2)$. By Example \ref{2^d_design}, $\calD$ is $ G $-locally $ 2 $-transitive for $G=\Aut(\calD)$. This gives part (1.2).

We consider other possibilities for $ G $. 
Note that $ G $ has two inequivalent $2$-transitive representations of degree $ v=2^{2m} $, and $G$ is affine such that $ G=2^{2m}{:}G_\a $, where $G_{\a}<\Sp_{2m}(2)$. By \cite[Section 2]{G2}, $\G_2(q)$ is a subgroup of $\Sp_{6}(q)$, where $q=2^n$.
By \cite[Appendix~1]{ha2trans} and Kantor's discussion, either $v=2^4$, and $G_\a=\A_6=\Sp_4(2)'$ \,; or
$v=2^{2m}=q^6$, and $\G_2(q)' \triangleleft G_\a$.\vs

Suppose that $ v=2^4 $, and $G_\a=\A_6=\Sp_4(2)'$. Recall that $ \calD $ is flag-transitive (locally transitive). The only transitive permutation representation of $ \Sp_4(2) $ of degree $ 6 $ or $ 10 $ is the $ 2 $-transitive representation with stabilizer $ \GO_4^\pm(2) $. Then, by \cite{ATLAS}, either\vskip1pt $(v,k,\lambda)=(16,6,2)$,\, $(G,G_{\a},G_{\a\b})=(2^4{:}\A_6,\A_6,\A_5)$\,; or\vskip1pt
$(v,k,\lambda)=(16,10,6)$,\, $(G,G_{\a},G_{\a\b})=(2^4{:}\A_6,\A_6,3^2{:}4)$.\vskip1pt
\noindent In both cases, $ G $ is locally 2-transitive on $ \calD $. This gives Row 3 of Table \ref{table_sym}.

Suppose that $ v=2^{2m}=q^6 $, and $\G_2(q)' \triangleleft G_\a<\Sp_{2m}(2)$. Only if $ q=2 $, $\G_2(2)'=\PSU_3(3)$ has a $ 2 $-transitive representation. In this case, $ G=2^6{:}\PSU_3(3).\calO $, where $ \calO =1, 2$. By $ \MA $, there exist subgroups $ H,K\leqslant G $ such that \[\mbox{$ H\cong K\cong \PSU_3(3).\calO $ and $ H\cap K\cong 3_+^{1+2}{:}8.\calO $.}\] Note that the triple $ (G,H,K) $ satisfies the conditions of Construction \ref{cons}, thereby giving rise to a $ G $-locally $ 2 $-transitive symmetric $ 2 $-$ (64,28,12) $ design, that is, $\scrS^{-}(6)$. This gives Row 4 of Table \ref{table_sym}.
For the case $q=2^{\frac{m}{3}}>2$, $\G_2(q)=\G_2(q)'$ is simple and has no $2$-homogeneous representation. By the order of $\G_2(q)$, the value of $k$, and the subgroups of $\G_2(q)$ on \cite{G2} and the Table 8.30 of \cite{low-dimensional}, we have
either\vskip1pt $\calD=\scrS^{-}(2m)$, and $(G,G_{\a},G_{\a\b})=(q^6{:}\G_2(q).\calO,\G_2(q).\calO,\SU_3(q){:}2.\calO)$\,; or\vskip1pt
$\calD=\scrS^{+}(2m)$, 
and $(G,G_{\a},G_{\a\b})=(q^6{:}\G_2(q).\calO,\G_2(q).\calO,\SL_3(q){:}2.\calO)$, \vskip1pt
\noindent where $\calO\leqslant \langle \phi \rangle \cong \ZZ_{\frac{m}{3}}$, and $\phi$ is the field automorphism. We have part (2.1).

\vs
	 
{\bf(c)} At last, we treat the four sporadic designs. Recall that they are flag-transitive (locally transitive). Let $ G=\Aut(\calD) $ and $ (\a,\b)\in\calP\times\calB $ be a flag. 

For $ \calD $ with $(v,k,\lambda)=(11,5,2)$ or $(11,6,3)$, we have $ G=\PSL_2(11) $, and $G_\a\cong G_\b \cong \A_5 $. The only transitive permutation representation of $ \A_5 $ of degree $ 5 $ or $ 6 $ is, respectively, the $ 2 $-transitive representation with stabilizer $ \A_4 $ or $\D_{10}$. Thus, $G$ is locally $ 2 $-transitive on $ \calD $. This gives Row 1 of Table \ref{table_sym}. Note that no group smaller than $ \PSL_2(11) $ acts 2-transitively on $ 11 $ points. 

For $ \calD $ with $(v,k,\lambda)=(176,50,14)$ or $(176,126,90)$. We have $ G=\HS $, and $G_\a\cong G_\b \cong \PSU_3(5){:}2 $. The only transitive permutation representation of $ \PSU_3(5){:}2 $ of degree $ 126 $ is the $ 2 $-transitive representation with stabilizer $ 5_+^{1+2}{:}8{:}2 $. Thus the latter design is $ G $-locally 2-transitive. This gives Row 5 of Table \ref{table_sym}. Note that no group smaller than $ \HS $ acts 2-transitively on $ 176 $ points. On the other hand, $\PSU_3(5){:}2$ has no $2$-homogeneous representation of degree $ 50 $. We have part (2.2). 
\end{proof}

\section{The affine case}\label{sec-affine}
Let $\calD=(\calP,\calB,\calI)$ be a $G$-locally $2$-homogeneous design. In this section we study case {\bf(B)}, where $ G $ is an affine 2-homogeneous group on $ \calP $ (thus, $ v=p^f $ is a prime power). By Wagner-Kantor's Lemma, there are two subcases to treat\,: 

- $G \leqslant \AGammaL_1(p^f)$\,; 

- $G \not\leqslant \AGammaL_1(p^f)$, so that, $ G $ is $ 2 $-transitive on $ \calP $, and by Proposition \ref{sym} we only need to consider non-symmetric designs.\vs\vs

\subsection{$G \leqslant \AGammaL_1(p^f)$}{~}\vskip2pt
There are exactly three $G$-locally 2-homogeneous designs with $G \leqslant \AGammaL_1(p^f)$. These designs are isomorphic to certain projective or affine spaces. 

\begin{proposition}\label{lem:1-dim}
Let $\calD=(\calP,\calB,\calI)$ be a $G$-locally $2$-homogeneous design with $v=p^f$ a prime power and $G\leqslant \AGammaL_1(p^f)$. Let $ (\0,\b)\in\calP\times\calB $ be flag. Then one of the following holds{\rm\,:}
\begin{itemize}
\item [\rm(1)] $\calD=\PG(2,2)$, and $(G,G_{\0},G_{\0 \b},G_{\b})=(\ZZ_{7}{:}\ZZ_3,\ZZ_{3},1,\ZZ_{3})${\rm\,;}\vskip0.03in
\item [\rm(2)] $\calD=\AG_2(3,2)$, and $(G,G_{\0},G_{\0 \b},G_{\b})=(\AGammaL_1(8),\GammaL_1(8),\ZZ_{3},\ZZ_2^2{:}\ZZ_{3})${\rm\,;}\vskip0.03in
\item [\rm(3)] $\calD=\AG_1(2,4)$, and $(G,G_{\0},G_{\0 \b},G_{\b})=(\AGammaL_1(16),\GammaL_1(16),\ZZ_{3}{:}\ZZ_4,\ZZ_2^2{:}(\ZZ_{3}{:}\ZZ_4))$.
\end{itemize}
\noindent Further, $ G $ is locally $ 2 $-transitive on $ \calD $ only in {\rm(3)}.
\end{proposition}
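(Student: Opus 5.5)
Write $v=p^f$ and $G=T{:}G_{\0}$, where $T\cong\ZZ_p^f$ is the translation group and $G_{\0}\le\GammaL_1(p^f)$ is metacyclic. Then $|G|\le p^f(p^f-1)f$. Since $G$ is $2$-homogeneous on $\calP$ and $G\le\AGammaL_1(p^f)$, the bound $|G|\ge\binom{v}{2}$ applies.

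The key observation is that every point and block stabilizer is solvable. The local actions $G_{\a}^{\calD(\a)}$ and $G_\b^{\calD(\b)}$ are therefore solvable $2$-homogeneous groups. By Wagner--Kantor (and Huppert for the solvable $2$-transitive case), such a group is of affine type: $r$ and $k$ are prime powers, $G_\a^{\calD(\a)}\le\AGammaL_1(r)$, and $G_\b^{\calD(\b)}\le\AGammaL_1(k)$. Small exceptions, such as $\SL_2(3)$-type groups on $9,25,49,\ldots$ points, are excluded because $G_{\0}$ is metacyclic.

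Next we count divisibility. Flag-transitivity (Lemma~\ref{loc-2-trans}) gives $r=|G_{\0}:G_{\0\b}|$, which divides $|G_{\0}|$, which divides $(p^f-1)f$. Local $2$-homogeneity gives $\binom{r}{2}$ dividing $|G_{\0}|$, and similarly $\binom{k}{2}$ dividing $|G_\b|$. So $r(r-1)/2$ divides $(p^f-1)f$, while also $r\ge k\ge3$ by Lemma~\ref{k<=v/2}, and $r(k-1)=\lambda(p^f-1)$.

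We split on symmetry.

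In the symmetric case, $\calD$ is a $2$-homogeneous symmetric design. By Lemma~\ref{c} any two blocks meet in $\lambda$ points. The condition $r(r-1)\mid 2f(p^f-1)$ with $r=k$ and $k(k-1)=\lambda(v-1)$ gives $\lambda(v-1)\mid 2f(v-1)$, so $\lambda\mid 2f$. It also gives $k\le\sqrt{2f(v-1)}+1$. Combining this with Fisher-type bounds, we expect only $\lambda=1$, $v=7$, $k=3$, that is, $\PG(2,2)$ with $G=7{:}3$. We then check directly that $G_\a\cong\ZZ_3$ is regular on the $3$ blocks, which is $2$-homogeneous but not $2$-transitive. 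Alternatively, we could invoke Proposition~\ref{sym}/Kantor when $G$ is $2$-transitive, and otherwise use $v\equiv3\pmod 4$ with $|G|$ odd.

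In the non-symmetric case, Lemma~\ref{c} applies: the design is quasi-symmetric with intersection numbers $0,c$, and $k\le v/2$. The key constraint is $r\ge (v-1)/(k-1)$, from $\lambda\ge1$. Together with $r(r-1)\le 2f(v-1)$ this gives $(v-1)/(k-1)\le r\lesssim\sqrt{2f v}$, so $k\gtrsim\sqrt{v/(2f)}$. Since $k$ is a prime power with $\binom{k}{2}$ dividing $|G_\b|$, and $|G_\b|=b^{-1}|G|$ is small, we also get an upper bound on $k$. Concretely, $|G_\b|=k|G_{\0\b}|$ and $|G_{\0\b}|=|G_{\0}|/r\le f(v-1)/r$. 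Hence $k-1\le 2f(v-1)/r$, and with $r(k-1)=\lambda(v-1)$ this gives $\lambda\le 2f$. This leaves finitely many $(p,f)$ families to examine.

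The main obstacle is turning these inequalities into a genuinely finite list. To do so I would use the structure of $\GammaL_1(p^f)$: a subgroup of order divisible by $r(r-1)/2$ must contain a cyclic part of order at least about $r(r-1)/(2f)$. The block stabilizer $G_\b$ must act on the $k$-set $\calD(\b)$ as a subgroup of $\AGammaL_1(k)$ with regular normal subgroup of order $k$. I would argue that this regular normal subgroup lies in $T$, so that blocks are cosets of additive subgroups of order $k$, that is, affine subspaces over a subfield. Then $\calD=\AG_i(e,q)$ with $q^e=p^f$, and the condition that $G_{\0}\le\GammaL_1$ is $2$-homogeneous on the $i$-subspaces forces small cases. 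This should leave $\AG_2(3,2)$ with $G=\AGammaL_1(8)$, where $G_{\0}=7{:}3$ acts on $7$ planes $2$-homogeneously but not $2$-transitively, and $\AG_1(2,4)$ with $\AGammaL_1(16)$, acting on $5$ lines through $\0$ via $\GammaL_1(16)\to 5{:}4$, which is $2$-transitive. Finally I would verify the stabilizers in each case, for example by \MA, and read off that only (3) is locally $2$-transitive.
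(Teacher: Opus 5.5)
There is a genuine gap. None of your inequalities bounds $v$, as you acknowledge when you call the finite list "the main obstacle". From $\binom r2\mid f(p^f-1)$ you only get $r\lesssim\sqrt{2fp^f}$. That is compatible with $p^f-1\le r(k-1)<r^2$ for every $f$, and $\lambda\le 2f$ does not help either.

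The missing idea is to apply the metacyclic structure of $G_\0\le\GammaL_1(p^f)$ to the \emph{local action}, not just to $|G_\0|$. The cyclic part $C=G_\0\cap\GL_1(p^f)$ is an abelian normal subgroup of $G_\0$, and $|G_\0:C|$ divides $f$. An abelian normal subgroup of the primitive group $G_\0^{\calD(\0)}$ acts either trivially or regularly, so $|C^{\calD(\0)}|\in\{1,r\}$. Hence $\binom r2\le|G_\0^{\calD(\0)}|\le rf$, that is, $r\le 2f+1$. The paper records this as $G_\0^{\calD(\0)}\cong\ZZ_r{:}\ZZ_{r-1}$ or $\ZZ_r{:}\ZZ_{(r-1)/2}$, with $r$ prime, $r\mid p^f-1$, and $r-1\mid 2f$.

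It follows that $p^f-1=v-1\le r(k-1)\le 2f(2f+1)$. This leaves only $(p,f)\in\{(3,2),(3,3)\}$, or $p=2$ with $2\le f\le 8$. The paper sieves these cases using:
\begin{itemize}
\item $r$ prime with $r\mid 2^f-1$;
\item $2^{f/2}<r\le 2f+1$, from $r^2>\lambda v$;
\item $r-1\mid f$ when $r\not\equiv 3\,(\mod 4)$.
\end{itemize}
This leaves $(f,r)=(3,7)$ or $(4,5)$, while $r=3$ gives $\PG(2,2)$ by Lemma~\ref{k<=v/2}. Your remark that $C$ has order at least about $r(r-1)/(2f)$ is one step short: what you need is the upper bound $|C^{\calD(\0)}|\le r$.

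Without that bound, the rest of your plan does not close.
\begin{itemize}
\item In the symmetric case you only say you `expect' $\lambda=1$, $v=7$. Kantor's theorem can handle the $2$-transitive subcase, but the $2$-homogeneous, non-$2$-transitive subcase is not argued.
\item In the non-symmetric case, the argument of Lemma~\ref{lem:pty-1} does make blocks translates of an $\FF_p$-subspace $M$. Nothing makes $M$ a subspace over a larger subfield, though. Nor does anything force the $r$ blocks through $\0$, which form a single $G_\0$-orbit, to exhaust all such subspaces. So the identification $\calD=\AG_i(e,q)$ is unsupported.
\item Your claim that $2$-homogeneity on these subspaces "forces small cases" again needs $r\le 2f+1$.
\item Metacyclicity cannot exclude Huppert's exceptional groups for $G_\b^{\calD(\b)}$, because $G_\b$ contains translations and need not be metacyclic.
\end{itemize}
Once $r\le 2f+1$ is in hand, no symmetric/non-symmetric split is needed. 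The finite check, followed by Construction~\ref{cons} and identification by \MA{}, finishes the proof as in the paper.
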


\begin{proof}
The proof proceeds by filtering for suitable parameters. The following restriction on parameters follows by basic property and Lemma \ref{k<=v/2}\,:  $$ \lambda(v-1)=r(k-1),\ 3\leq k\leqslant r,\ k\notin\{v-2,v-1\}.\ \ \ \ (\ast) $$
\noindent Meanwhile, the stabilizer  $G_\0\leqslant \GammaL_1(p^f)$ is a metacyclic group, and thus the $2$-homogeneous group $G_\0^{\calD(\0)}$ of degree $ r $ is either	$\ZZ_r{:}\ZZ_{r-1}$ or $ \ZZ_r{:}\ZZ_{\frac{r-1}{2}}$, with $r \mid (p^f - 1)$, and, respectively, $r-1 \mid f$ or $\frac{r-1}{2} \mid f$. In both cases, $r\leq 2f+1 $. By $ (\ast) $, we have\,:
$$ p^f-1=v-1 \leq r(k-1) \leq (2f+1)\cdot 2f.\ \ \ \ \ (\ast\ast)$$

{\bf(a)} Suppose $ r=3 $. By Lemma \ref{k<=v/2}, $ v=7 $, $\calD=\PG(2,2)$ and $ \Aut(\calD)=\PGL(3,2) $. Note that $ G $ is 2-homogeneous on $ \calP $ with $ G\leqslant\AGL_1(7)\cap\PGL(3,2)$. Thus, $ G=\ZZ_7{:}\ZZ_3 $, and $ G_\0\cong G_\b\cong\ZZ_3 $, $ G_{\0\b}=1 $. Note that $ G $ is locally 2-homogeneous but not locally $ 2 $-transitive on $ \calD $. Part {\rm(1)} is satisfied.\vs
	
{\bf(b)} Suppose $r \geq 4$. Then $ f\geq 2 $. The only solutions for $(\ast\ast)$ are \[(p,f)=(3,2),\,(3,3) \mbox{\ or\ } (2,f) \mbox{\ with\ }2 \leq f \leq 8.\] Note that, $ (p,f)=(3,2) $ yields $G_\0\leqslant \GammaL(1,9)=\ZZ_8{:}\ZZ_2$ and $r \div 8$, so that, $r=4,8$, $G_\0^{\calD(\0)}\cong\ZZ_r{:}\ZZ_{r-1}$\,; $ (p,f)=(3,3) $ yields $r \div 26$, so $r \geqslant 13> 2f+1$\,; $ (p,f)=(2,2) $ yields $r\div 3$. All these cases are impossible. For the remaining cases, it follows from $r^2\geq rk>\lambda v\geqslant v=2^f$ that $r$ is a prime power satisfying \[\mbox{$2^{\frac{f}{2}} <r \leqslant 2f+1$ and $r \div 2^f-1$.}\] We conclude that $(f,r)\in\{(3,7),(4,5),(6,9),(8,17)\}$. Further, if $r\not\equiv3\,(\mod 4)$, then $G_\0^{\calD(\0)}$ is 2-transitive, so $r-1 \div f $. Thus, $(f,r)\ne(6,9)$ or $(8,17)$.\vs
	
{\bf(b.1)} Assume $(p,f,r)=(2,3,7)$. Then $ v=8 $, $ G\leq \AGammaL_1(8)=\ZZ_2^3{:}(\ZZ_7{:}\ZZ_3) $. By $ (\ast) $, $ k\in\{3,4,5\} $\,; further, $ vr=bk $ yields $ k=4$, $b=14 $. Thus, $ (v,b,r,k,\lambda)=(8,14,7,4,3) $. Since $G_\0$ is $2$-homogeneous on $r=7$ points, we have $ 7\cdot 3\div |G_\0| $, so $G=\AGammaL_1(8)$. Note that there exist $ H,K\leqslant G $ such that \[\mbox{$ H=\GammaL_1(8)=\ZZ_7{:}\ZZ_3,\, K=\ZZ_2^2{:}\ZZ_3 $,\, and $ H\cap K=\ZZ_3 $.}\] By Construction \ref{cons}, the triple $ (G,H,K) $ induces a design $ \calD $ with above parameters, which, by $ \MA $, is isomorphic to $ \AG_2(3,2) $. Note that $ G $ is locally 2-homogeneous but not locally $ 2 $-transitive on $ \calD $. Part {\rm(2)} is satisfied.\vs\vs
	
{\bf(b.2)} Assume $(p,f,r)=(2,4,5)$. Then $v=16$, $G\leq\AGammaL_1(16)=\ZZ_2^4{:}(\ZZ_{15}{:}\ZZ_4)$. The only pair satisfying $ (\ast) $ is $ (k,\lambda)=(4,1) $. Thus, $(v,b,r,k,\lambda)=(16,20,5,4,1)$. Since $G$ is $2$-transitive on $v=16$ points, and $G_\0$ is $2$-transitive on $r=5$ points, we have $15\cdot 4\div |G_\0|$, so $ G=\AGammaL_1(16) $. Note that there exist $ H,K\leqslant G $ such that \[\mbox{$ H=\GammaL_1(16)=\ZZ_{15}{:}\ZZ_4,\, K=\ZZ_2^2{:}(\ZZ_3{:}\ZZ_4) $,\, and $ H\cap K=\ZZ_3{:}\ZZ_4 $.}\] By Construction \ref{cons}, the triple $ (G,H,K) $ induces a design $ \calD $ with above parameters, which, by $ \MA $, is isomorphic to $\AG_1(2,4)$. Note that $ G $ is locally 2-transitive on $ \calD $. Part {\rm(3)} is satisfied.
\end{proof}

A construction of $\AG_1(2,4)$ is as below. 

\begin{example}\label{exam:AGammaL}
{\rm Let $G=\AGammaL_1(16)$ act on $\FF_{16}$ in its natural action. Write the field $\FF_{16} = \{0\} \cup \FF_{16}^\times=\{0\}\cup \langle \omega \rangle$, and let $ \b = \{0, 1, \omega^5, \omega^{10}\}$ be with $\b\cong \FF_4 $. Define $\calD = (\calP,\calB,\calI)$ with  $\calP = \FF_{16}$, $\calB = \{\, \b^g \mid g \in G \,\}$, and $\a\,\calI\,\b \Leftrightarrow \a\subset\b$. Then $ \calD $ is isomorphic to $ \AG_1(2,4) $. Further, $G$ is locally $2$-transitive on $ \calD $, where $ (G,\,G_\0,\,G_{\0\b},\,G_\b)=(\AGammaL_1(16),\,\GammaL_1(16),\,\ZZ_{3}{:}\ZZ_4,\,\ZZ_2^2{:}(\ZZ_3{:}\ZZ_4)) $.\vs\vs\vs
}
\end{example}

\subsection{$G \not\leqslant \AGammaL_1(p^f)$}{~}\vskip2pt

In this part, we study designs that satisfy the following hypothesis.

\begin{hypothesis}\label{hypo-1}
{\rm Let $\calD=(\calP,\calB,\calI)$ be a $G$-locally 2-homogeneous non-symmetric design such that $G$ is an affine 2-transitive group on $ \calP $, and further, $G\not\leqslant\AGammaL_1(v)$, where $v=p^f $, $ p $ is a prime. Let $(\0,\b) \in \calP \times \calB$ be a flag, and write $ G=V{:}G_\0=\ZZ_p^f{:}G_\0$, where $G_\0 \leqslant \GL_f(p)$. Note that $ r\geq 4 $ by Lemma \ref{k<=v/2}.} 
\end{hypothesis}

The main result is as follows.

\begin{proposition}\label{Affine}
Under {\rm Hypothesis \ref{hypo-1}}, one of the following holds{\,:}
\begin{itemize}
\item[\rm(1)] $\calD=\AG_1(a,q)$ or $\AG_{a-1}(a,q)$, and $\ASL_a(q)\lhd G\leqslant\AGammaL_a(q)$, where $a\geq2$, $v=q^a>4${\rm\,;}\vs
\item[\rm(2)] $\calD =\AG_3(4,2)$, and $(G,G_{\0},G_{\0\b},G_{\b})=(2^4{:}\A_7,\A_7,\PSL_3(2),2^3{:}\PSL_3(2))$.
\end{itemize}
Further, $ G $ is locally $ 2 $-transitive on $ \calD $ in both cases.
\end{proposition}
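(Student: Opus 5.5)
Under Hypothesis~\ref{hypo-1}, $G=V{:}G_\0$ is an affine $2$-transitive group with $G_\0\not\leqslant\GammaL_1(p^f)$. By Hering's classification (see \cite[Appendix~1]{ha2trans}), $G_\0$ is one of the following: $\SL_a(q)\lhd G_\0\leqslant\GammaL_a(q)$ with $a\geq 2$; $\Sp_{2m}(q)\lhd G_\0$; $\G_2(q)'\lhd G_\0$ with $q$ even; or one of the finitely many exceptional groups. The exceptional groups are $\A_6,\A_7\leqslant\GL_4(2)$, $\SL_2(13)\leqslant\GL_6(3)$, $\PSU_3(3)$ on $2^6$, the solvable extraspecial-normalizer groups on $p^2$ and $3^4$, and $\SL_2(5)$-type groups on $q^2$ for $q\in\{9,11,19,29,59\}$. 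The plan is to treat these families in turn. Throughout I would use the facts that $G_\0$ must act $2$-homogeneously on $\calD(\0)$, a set of size $r$ with $4\leqslant r$, and that $\calD$ is quasi-symmetric with intersection numbers $0,c$ by Lemma~\ref{c}. From Lemma~\ref{k<=v/2} we also have $3\leqslant k\leqslant v/2$.

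\textbf{Step 1: the linear family $\SL_a(q)\lhd G_\0$.} The $2$-homogeneous actions of $G_\0$ of degree $r$ are limited. The projective action on $1$-spaces or on hyperplanes gives $r=\frac{q^a-1}{q-1}$. Small coincidences give further degrees: $\SL_2(q)$ with $q\in\{4,5,7,8,9,11\}$ of degree $6,5,7,\dots$, and $\SL_4(2)\cong\A_8$ of degree $8$. Nontrivial actions of the torus quotient are ruled out because $\SL_a(q)$ must act nontrivially, or $r$ is too small.

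In the main case, $G_{\0\b}$ is the stabilizer of a $1$-space $U$ or of a hyperplane $W$. Then $\b\ni\0$ is a union of $G_{\0\b}$-orbits. Since $G_\b\cap V$ acts by translations, the orbit structure forces $\b$ to be an affine subspace containing $\0$ and stabilized by $G_{\0\b}$: $U$, $W$, or possibly $U$ together with some $G_{\0\b}$-orbits. I would show that $G_\b$ being $2$-homogeneous on $\b$, together with the fact that $\lambda$ is constant, forces $\b=U$ or $\b=W$ (up to the dual). I would use that translations in $G_\b$ are $V_\b=\{v:\b+v=\b\}$, and that $k$ divides $|G_\b|$ with $G_\b=V_\b{:}G_{\0\b}$ acting transitively on $\b$. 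The choice $\b=U$ gives $\AG_1(a,q)$ and $\b=W$ gives $\AG_{a-1}(a,q)$, which is (1). The condition $v>4$ comes from triviality at $q=2$, $a=2$ (Example~\ref{AG}). The exceptional low-degree actions are then handled by parameter arithmetic from $\lambda(v-1)=r(k-1)$ together with $c\in\ZZ$. They either reduce to the same designs or fail. This includes $\SL_2(4)\cong\A_5$ with $r=6$ on $v=16$, and $\A_8$ with $r=8$ on $v=16$, which should give $\AG_3(4,2)$ via $\AGL_4(2)$ and so be covered by (1).

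\textbf{Step 2: the symplectic and $\G_2$ families.} For $\Sp_{2m}(q)\lhd G_\0$ with $m\geqslant 2$, the only $2$-transitive actions are those of degrees $2^{2m-1}\pm2^{m-1}$ when $q=2$, and the action of $\Sp_4(2)'$. The projective action of degree $\frac{q^{2m}-1}{q-1}$ is only rank~$3$, so it is not $2$-homogeneous. In each candidate I would solve $\lambda(v-1)=r(k-1)$ with $k\mid|G_\b|$, $k\leqslant v/2$, and the integrality of $c$. Note that $k$ must be a power of $p$ or otherwise constrained, since the transitive group $G_\b$ contains no translations outside $V_\b$. For $q=2$, $v=2^{2m}$, $r=2^{2m-1}\pm2^{m-1}$ this should give $b=vr/k$ inconsistent with the non-symmetry and the quasi-symmetric constraint, via Lemmas~\ref{quasi_sym_4design} and \ref{quasi_sym_b=2v-2}. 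The family $\G_2(q)$ with $q>2$ has no $2$-homogeneous action. For $\G_2(2)'=\PSU_3(3)$ I would use the degree-$28$ action and check with $\MA$.

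\textbf{Step 3: the sporadic affine groups.} These involve a finite list of $(v,G_\0)$. For each, I would run Steps~1--5 of the procedure after Construction~\ref{cons} in $\MA$. This should yield only $G=2^4{:}\A_7$ with $K=2^3{:}\PSL_3(2)$ and $H\cap K=\PSL_3(2)$, giving $\AG_3(4,2)$, which is (2). Finally, local $2$-transitivity in all surviving cases follows from Example~\ref{AG} and direct inspection. I expect the main obstacle to be Step~1, specifically proving that a block through $\0$ must be a subspace. I would attack it by showing that $V_\b$ acts transitively on $\b$. To do this, I would use that $G_\b^{\b}$ is $2$-homogeneous and hence primitive, so its normal subgroup $V_\b$ is either trivial or transitive. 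The trivial case would force $G_\b\cong G_{\0\b}$-like structure with $|\b|$ dividing $|G_\0|$, and I would rule it out by order arguments.
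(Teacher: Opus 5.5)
Your overall route is the paper's. You reduce via Hering's list and make everything hinge on the fact (the paper's Lemma~\ref{lem:pty-1}) that a block through $\0$ is a subgroup of translations, so that $k=p^i$ and $\calD$ sits inside $\AG_i(f,p)$. Your derivation of that fact is a fine reformulation of the paper's argument: $V_\b=G_\b\cap V$ is normal in $G_\b$, so by primitivity of $G_\b^{\b}$ it is either transitive on $\b$, giving $\b=\0^{V_\b}=V_\b$, or trivial on $\b$, giving $V_\b=1$ since translations are semiregular.

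The case $V_\b=1$ is exactly where the proposition is decided, and your plan for it does not close. $V_\b=1$ only gives $|G_\b|\mid|G_\0|$, i.e.\ $k\mid r$, and that is compatible with every parameter condition you list (a Steiner triple system on $19$ points has $k=3\mid r=9$ and $c=1$). The missing ingredients are local primitivity at the \emph{point} and non-symmetry. If $V_\b=1$, the projection $G\to G/V\cong G_\0$ is injective on $G_\b$ and sends $G_{\0\b}$ to itself. So the image of $G_\b$ is a subgroup of $G_\0$ properly containing the maximal subgroup $G_{\0\b}$, hence equal to $G_\0$. Then $VG_\b=G$ and $b=|V:V\cap G_\b|=v$, contradicting the hypothesis that $\calD$ is non-symmetric. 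This hypothesis cannot be avoided: for $\scrS^{\pm}(2m)$ the block stabilizer $\Sp_{2m}(2)$ is a complement to $V$ and contains no translations at all. Once $\b$ is known to be a $G_{\0\b}$-invariant subspace, Step~1 still needs the (easy) fact that the only such subspaces are $0,U,V$ (resp.\ $0,W,V$). Your discussion of ``$U$ together with some $G_{\0\b}$-orbits'' then becomes unnecessary.

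Two smaller corrections.

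First, the action of $\SL_4(2)\cong\A_8$ of degree $r=8$ on $v=16$ does not produce $\AG_3(4,2)$. There $15\lambda=8(k-1)$ forces $k\geqslant 16=v$, so that case simply dies. $\AG_3(4,2)$ comes from the natural degree $r=15$, and, for the smaller group $2^4{:}\A_7$, from the exceptional class.

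Second, with $k=p^i$ in hand the later steps are much lighter than you plan. For $\Sp_{2m}(2)$ with $r=2^{m-1}(2^m\pm1)$ one gets $\lambda(2^m\mp1)=2^{m-1}(2^i-1)$. This is impossible when $r=2^{m-1}(2^m-1)$, and when $r=2^{m-1}(2^m+1)$ it forces $i=m$ with $0<c-1<1$. The paper instead observes that $G_{\0\b}\geqslant\GO^{\pm}_{2m}(2)$ is irreducible on $V$, so it cannot fix the proper subspace $\b$; the same irreducibility trick disposes of the extraspecial classes. The necessary condition $v<r^2$ (from $\lambda v<rk<r^2$) fails for the $\SL_2(5)$, $\SL_2(13)$ and $\Q_8$ cases, so these need no $\MA$ search. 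Lemmas~\ref{quasi_sym_4design} and \ref{quasi_sym_b=2v-2} are not needed here.
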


The proof of the this result proceeds by analyzing affine 2-transitive groups, which were classified in \cite{HERING1985151}; see also \cite[Appendix~1]{ha2trans}. There are three {\it infinite classes} (not contained in $ \AGammaL_1(p^f) $), as well as some {\it extraspecial classes} and {\it exceptional classes}. These classes are respectively treated in Lemmas \ref{lem:SL}\,-\,\ref{G_0_exceptional}. Before that, we establish the next lemma. It is indeed part of \cite[Thm.\,1.3]{locallyprimitive}, which indicates that a design satisfying Hypothesis \ref{hypo-1} can only be a subdesign of some affine space $ \AG_i(f,p) $.

\begin{lemma}\label{lem:pty-1}
Under {\rm Hypothesis \ref{hypo-1}}, there exists a nontrivial $G_{\0\b}$-invariant $ i $-subspace $M<V=\ZZ_p^f$ such that $G_\b=M{:}G_{\0\b}$. In particular, $ k=p^i $, and $ \calD $ is a subdesign of $ \AG_i(f,p) $.
\end{lemma}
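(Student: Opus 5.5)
Under Hypothesis \ref{hypo-1}, I would show that the translation subgroup of $G$ meets $G_\b$ in a nontrivial subgroup $M:=V\cap G_\b$, and that this $M$ acts regularly on $\calD(\b)$.

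\textbf{Step 1: $M\ne 1$.} Write $V=\ZZ_p^f$ for the translation subgroup. Then $G_\b\cap V=M$ is a normal subgroup of $G_\b$. Its orbits on $\calD(\b)$ therefore form a $G_\b$-invariant partition. Since $G_\b^{\calD(\b)}$ is 2-homogeneous, hence primitive, $M$ is either trivial or transitive on $\calD(\b)$.

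Suppose first that $M=1$. Then $G_\b$ embeds in $G/V\cong G_\0$, so $|G_\b|\le |G_\0|$. On the other hand, $|G_\b|=b|G|^{-1}\cdot$... more usefully, $|G:G_\b|=b$ and $|G:G_\0|=v$, and the design is non-symmetric, so $b>v$. Hence $|G_\b|<|G_\0|$, which is not yet a contradiction. To get one, I would use Lemma \ref{c}: since $\calD$ is non-symmetric, there are disjoint blocks. For any translation $t\in V$, the block $\b^t$ is either equal to $\b$ or meets it in $0$ or $c$ points. I would then count the translates of $\b$ through $\0$. The point $\0$ lies in $\b^{t}$ exactly when $-t\in\b$, so there are exactly $k$ such translates. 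When $M=1$ these are $k$ distinct blocks through $\0$. Pairs of them are $\b^{-x},\b^{-y}$ with $x,y\in\b$, and their intersections are governed by $|\b\cap(\b+x-y)|$. I would try to show this forces $\b$ to be closed under differences, i.e. a coset of a subgroup, contradicting $M=1$. This is the delicate part, and I expect it to be the main obstacle.

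An alternative, more direct route cites \cite[Thm.\,1.3]{locallyprimitive}. That result shows that a locally primitive design with an affine point-primitive group has $V\cap G_\b$ transitive on $\calD(\b)$ (the "subdesign of $\AG_i$" conclusion). The paper indeed says this lemma is part of that theorem, so I would invoke it for Step 1 and only sketch the counting argument above as motivation.

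\textbf{Step 2: the structure of $G_\b$.} Once $M$ is transitive on $\calD(\b)$, $M$ is abelian and transitive, hence regular. So $M$ is a subgroup of $V$ of order $k$, and $k=p^i$ where $M$ is an $\FF_p$-subspace of dimension $i$. Regularity gives $G_\b=M\,G_{\0\b}$ with $M\cap G_{\0\b}=M\cap G_\0=1$. Because $M\lhd G_\b$, the group $G_{\0\b}$ normalises $M$, so $M$ is $G_{\0\b}$-invariant and $G_\b=M{:}G_{\0\b}$.

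\textbf{Step 3: $\calD$ is a subdesign of $\AG_i(f,p)$.} Since $\0\in\calD(\b)$ and $M$ is regular on $\calD(\b)$, we get $\calD(\b)=\0^M=M$. Thus $\b$ is the $i$-subspace $M$ itself. Every other block is a $G$-image of $M$, hence a coset of an $i$-subspace. So $\calB\subseteq\calB_i$, which makes $\calD$ a subdesign of $\AG_i(f,p)$.

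Finally, $M$ is nontrivial because $k\ge 3$. It is also proper, $M\neq V$, because $k\le v/2$ by Lemma \ref{k<=v/2}.
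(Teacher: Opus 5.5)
Your Steps 2 and 3 are fine once $M=V\cap G_\b$ is known to be transitive on $\calD(\b)$. That transitivity, however, is the entire content of the lemma, and Step 1 does not establish it. The difference-set counting is left unfinished. The fallback of citing \cite[Thm.\,1.3]{locallyprimitive} amounts to assuming the result. Moreover, your paraphrase of that theorem (affine point-primitive $\Rightarrow$ $V\cap G_\b$ transitive on $\calD(\b)$) is false without non-symmetry. For $\calD=\scrS^{\pm}(2m)$ and $G=2^{2m}{:}\Sp_{2m}(2)$, the block stabilizer $G_\b\cong\Sp_{2m}(2)$ meets $V$ trivially, and $k=2^{m-1}(2^m\pm1)$ is not a power of $2$. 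So the inequality $b>v$ must enter decisively, and in your sketch it only produces the inconclusive $|G_\b|<|G_\0|$.

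The missing idea is to use local primitivity at the \emph{point}, i.e.\ maximality of $G_{\0\b}$ in $G_\0$.
\begin{itemize}
\item Since $V\le VG_\b$ and $G=V{:}G_\0$, we have $VG_\b=V{:}(VG_\b\cap G_\0)$ with $G_{\0\b}\le VG_\b\cap G_\0\le G_\0$.
\item If $VG_\b\cap G_\0=G_\0$, then $VG_\b=G$ and $b=|VG_\b:G_\b|=|V:V\cap G_\b|\le v$, contradicting non-symmetry.
\item Hence $VG_\b\cap G_\0=G_{\0\b}$. Comparing $|VG_\b|=|V|\,|G_{\0\b}|$ with $|VG_\b|=|V|\,|G_\b|/|M|$ gives $|M|=|G_\b:G_{\0\b}|=k$.
\item Since $M$ acts semiregularly on $\calP$ and fixes $\b$, it is regular on $\calD(\b)$, and your Steps 2--3 then apply verbatim.
\end{itemize}
In your own framing: if $M=1$, then $G_\b\cong VG_\b\cap G_\0$ is an overgroup of $G_{\0\b}$ in $G_\0$ of order $k|G_{\0\b}|$, so it equals $G_\0$ and $b=v$.

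This is exactly the paper's argument, phrased there via a generator. It writes $G_\b=\l G_{\0\b},g\r$ with $g={\bf v}h$. If $h\notin G_{\0\b}$, this would force $VG_\b=G$. So $h\in G_{\0\b}$, ${\bf v}\ne\0$, and $G_\b=\l {\bf v}^{G_{\0\b}}\r{:}G_{\0\b}$.
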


\begin{proof}
Since $ G $ is locally primitive on $ \calD $, it follows that $G_{\0\b}$ is a maximal subgroup of both $G_{\0}$ and $G_{\b}$, so that, $G_{\b}=\langle G_{\0\b}, g \rangle$ for some $g \in G_{\b} \setminus G_{\0\b}$. Write $ G=V{:}G_\0 $ and $g={\bf v}h$ with ${\bf v} \in V$, $ h\in G_{\0} $. Thus, either $h \in G_{\0} \setminus G_{\0\b}$\,; or $h \in G_{\0\b}$, $ {\bf v}\ne 0 $. For the former case, we have $$VG_{\b}=V\langle G_{\0\b}, g \rangle=V\langle G_{\0\b}, h \rangle=VG_{\0}=G,$$ so that, $b = |G : G_\b| \leq |V| = v$. Note that $v \leq b$. Then $v=b$ and $\calD$ is symmetric, not in our case. Hence the latter case holds, i.e., $h \in G_{\0\b}$, $ {\bf v}\ne 0 $, and
$$ G_{\b} = \langle G_{\0\b}, g \rangle = \langle G_{\0\b}, {\bf v} \rangle = \langle {\bf v}^{G_{\0\b}} \rangle {:} G_{\0\b}. $$
Let $M= \l {\bf v}^{G_{\0\b}}\r$. Then $ M $ is a non-trivial subspace of $ V $ as $ {\bf v}\ne\0 $. If $M=V$, then $G_{\b}$ contains all translations, and $\b = \calP$ since $\0$ is incident with $\b$, contrary to the assumption that $ \calD $ is incomplete $ (k<v) $. Hence $1<M<V$, and $ G_\b=M{:}G_{\0\b} $, $ k=|G_\b:G_{\0\b}|=p^i $, where $ i=\dim M $. Further, $ M\lhd G_\b $ is transitive on $ \calD(\b)=\b $. Thus, $ \b =\0^M=M $. Now, $ \calB $ is the orbit $ M^G $ which consists of the cosets of subspaces lying in $ M^{G_\0} $. Hence $ \calD $ is a subdesign of $ \AG_i(f,p) $.
\end{proof}

\begin{lemma}\label{lem:SL}
Under {\rm Hypothesis \ref{hypo-1}}, if $ G $ is of infinite classes, then $\calD$ is $\AG_1(a,q)$ or $\AG_{a-1}(a,q)$ and $ \SL_a(q)\lhd G_\0\leq\GammaL_a(q)$, where $a\geq2$, $v=q^a>4$. Further, $ G $ is locally $ 2 $-transitive on $ \calD $.
\end{lemma}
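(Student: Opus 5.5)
The three infinite classes of affine $2$-transitive groups not contained in $\AGammaL_1(p^f)$ are: $\SL_a(q)\lhd G_\0\le\GammaL_a(q)$ with $v=q^a$, $a\ge2$; $\Sp_{a}(q)\lhd G_\0\le\GammaL_1(q)\circ\GSp_a(q)$ with $a\ge4$ even; and $\G_2(q)'\lhd G_\0$ with $v=q^6$, $q$ even. I will treat them one at a time.

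The main tool is Lemma \ref{lem:pty-1}. It gives a $G_{\0\b}$-invariant $\FF_p$-subspace $M$ with $\b=M$, $k=|M|=p^i$, and $G_\b=M{:}G_{\0\b}$. Since $G_\0$ acts primitively on $\calD(\0)=M^{G_\0}$, the subgroup $G_{\0\b}$ is maximal in $G_\0$ and equals the full stabilizer $(G_\0)_M$. Moreover $(G_\0)_M$ has index $r$, and $r\ge k$ by Lemma \ref{k<=v/2}.

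\textbf{Step 1: $M$ is an $\FF_q$-subspace.} Let $Z\le G_\0$ be the scalars $\FF_q^\times$ (up to the field-automorphism part). These are normalised by $G_{\0\b}$. If $Z$ does not stabilise $M$, then $M^{Z}$ gives at least two members of $\calD(\0)$ through $\0$. I would like to show that $M^{\FF_q}$, the $\FF_q$-span of $M$, is then stabilised by $\langle G_{\0\b},Z\rangle$, which would contradict maximality. The cleaner route, however, is to use the $2$-homogeneity of $G_\0$ on $M^{G_\0}$ together with Lemma \ref{c}: two distinct blocks through $\0$ meet in exactly $c$ points, and $c$ is a power of $p$ because the intersection is a subspace. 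Comparing $M$ with $M^z$ for a scalar $z$ then forces an $\FF_q$-structure.

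In practice I expect it is easiest to use the known maximal subgroups of $\SL_a(q)$, $\Sp_a(q)$ and $\G_2(q)$ via Aschbacher classes. These give the index $r$ as a function of $M$. The stabiliser of an $\FF_p$-subspace that is not $\FF_q$-closed is contained in a parabolic or a subfield subgroup, and the index of a subfield-type subgroup is far larger than $q^a$. Such subgroups also have no $2$-homogeneous action on their cosets that is compatible with $\lambda(v-1)=r(k-1)$.

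\textbf{Step 2: the $\SL$ class.} Once $M$ is an $\FF_q$-subspace of dimension $j$, its stabiliser is the parabolic $P_j$. The action on $\calD(\0)$ is then the action on $j$-subspaces of $\FF_q^a$. This action is $2$-homogeneous only for $j=1$ or $j=a-1$, since otherwise the rank is at least $3$. This yields $\AG_1(a,q)$ or $\AG_{a-1}(a,q)$, and Example \ref{AG} gives local $2$-transitivity. The requirement $v>4$ excludes the trivial case $q=2$, $j=1$, and also rules out $a=2,q=2$.

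\textbf{Step 3: the $\Sp$ class.} Here $j$-subspaces split into several $G_\0$-orbits according to their radical, and the actions on them are not $2$-homogeneous for $1<j<a-1$. For $j=1$, the set of points is a single orbit, but the action on points of $\PG(a-1,q)$ has rank $3$ (perpendicular versus non-perpendicular pairs), so it is not $2$-homogeneous. For $j=a-1$, hyperplanes correspond to points via $\perp$, so the same argument applies. The one exception is $\Sp_4(2)'\cong\A_6$, which is not in the infinite class, and small cases will be checked directly.

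\textbf{Step 4: the $\G_2$ class.} The maximal subgroups of $\G_2(q)$ are parabolics of index $(q^6-1)/(q-1)$, together with $\SU_3(q).2$, $\SL_3(q).2$ and subfield subgroups. For parabolics, $k\le r$ forces $k\le q^{5}$. The point action of $\G_2(q)$ on $\PG(5,q)$ has rank greater than $2$ for $q>2$. The $\SU_3$ and $\SL_3$ cosets have rank $3$. This excludes everything except possibly $q=2$, which I would handle by $\MA$.

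The main obstacle is Step 1, namely excluding blocks that are $\FF_p$-subspaces but not $\FF_q$-subspaces. I would handle it first through the maximality of $G_{\0\b}=(G_\0)_M$ in $G_\0$ combined with Aschbacher's theorem, and fall back on the parameter bound $r^2>v$ (from $rk>\lambda v$ and $k\le r$) to exclude small-index maximal subgroups that are not of parabolic type.
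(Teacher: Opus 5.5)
Your proposal has a genuine gap at Step~1, and the rest of the argument depends on it. Steps~2 and~3 assume that the block $M$ of Lemma~\ref{lem:pty-1} is an $\FF_q$-subspace, and when $q$ is a proper power of $p$ you never establish this.

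\begin{itemize}
\item The scalar argument only works if $G_\0$ contains all scalars $Z=\FF_q^\times$. In that case maximality gives $G_\0=G_{\0\b}Z$, which is not itself a contradiction; the contradiction is that then $r\le q-1$, against $r^2>rk>\lambda v\ge v$. But $G_\0$ may be as small as $\SL_a(q)$, whose scalars have order $\gcd(a,q-1)$, possibly $1$.
\item Comparing $M$ with $Mz$ via Lemma~\ref{c} yields nothing: $|M\cap Mz|=c$ for every $z$ with $Mz\ne M$ is not contradictory.
\item The claim that the stabiliser of a non-$\FF_q$-closed $\FF_p$-subspace lies in a parabolic or subfield subgroup is unproved. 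Maximality only places $(G_\0)_M$ in some Aschbacher class.
\item The bound $r^2>v$ removes only subgroups of small index. A large index $r$ is not contradictory by itself.
\end{itemize}

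So to discard subfield-type and other non-parabolic stabilisers you need that $G_\0$ is not $2$-homogeneous on their cosets. That is the classification of $2$-transitive groups. Step~4 inherits the same problem: the subfield subgroups on your list are never actually excluded.

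The missing idea is to apply that classification directly to the local action $G_\0^{\calD(\0)}$, as the paper does, instead of classifying subspaces.

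\begin{itemize}
\item The normal subgroup $\SL_a(q)$ (resp.\ $\Sp_{2m}(q)$, $\G_2(q)'$) is transitive on $V\setminus\{\0\}$. So it cannot stabilise the proper nonzero subspace $M$, hence it acts nontrivially, and therefore transitively, on $\calD(\0)$.
\item Its centre acts trivially, since otherwise $r\le q-1$.
\item Apart from the solvable cases $(a,q)=(2,2),(2,3)$, which are handled directly, $G_\0^{\calD(\0)}$ is then a nonsolvable $2$-homogeneous group. By Wagner--Kantor it is $2$-transitive, and it is almost simple with socle $\PSL_a(q)$, $\PSp_{2m}(q)$ or $\G_2(q)$.
\end{itemize}

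The known list of $2$-transitive groups then finishes the proof.

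\begin{itemize}
\item $\G_2(q)$ with $q>2$ has no $2$-transitive action.
\item $\PSp_{2m}(q)$ has one only for $q=2$, with stabiliser $\GO^\pm_{2m}(2)$. This is irreducible on $V$, contradicting Lemma~\ref{lem:pty-1}.
\item For $\PSL_a(q)$, either $r=(q^a-1)/(q-1)$, or $r$ is one of finitely many exceptional degrees, which $r^2>v$, $\lambda(v-1)=r(k-1)$ and $k=p^i$ eliminate. In the first case $G_{\0\b}$ contains the stabiliser in $\SL_a(q)$ of an $\FF_q$-point or hyperplane $W$. Its only nonzero proper invariant $\FF_p$-subspace is $W$, so $M=W$, and the $\FF_q$-structure you were trying to prove in Step~1 comes for free.
\end{itemize}

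A small slip in Step~2: it is $k\ge 3$ from Lemma~\ref{k<=v/2}, not $v>4$, that excludes $\AG_1(a,2)$.
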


\begin{proof}
By \cite[Appendix~1]{ha2trans}, we have three infinite classes to consider\,:
	
{\bf(a)} $ \SL_a(q)\lhd G_\0\leq\GammaL_a(q) $, where $ p^f=q^a $ and $ a\geq 2 $\,;
	
{\bf(b)} $ \Sp_{2m}(q)\lhd G_\0 $, where $ p^f=q^{2m} $, $ m\geq 2 $ and $(m,q)\ne(2,2)$\,;
	
{\bf(c)} $ \G_2(q)'=\G_2(q)\lhd G_\0 $, where $ p^f=q^6 $, $ p=2 $ and $ q>2 $.\vs
	
Note that, the group $ \SL_a(q) $, $ \Sp_{2m}(q) $ or $ \G_2(q)' $, respectively in class {\bf(a)}\,-\,{\bf(c)}, is transitive on $V\setminus\{\0\}$, so it acts non-trivially on $\calD(\0)$. In particular, for {\bf(b)}, {\bf(c)}, $G_\0^{\calD(\0)}$ is an almost simple 2-transitive group with socle $ \PSp_{2m}(q) $ or $ \G_2(q) $. This does not hold, because $\G_2(q)$ admits no $2$-transitive permutation representation for $ q>2 $, and meanwhile, $ \PSp_{2m}(q) $ has a $2$-transitive permutation representation only if $ q=2 $, in which case the stabilizer $\GO_{2m}^{\pm}(2)$ acts irreducibly on $V = \ZZ_2^{2m}$, so $ G_{\0\b} $ also acts irreducibly on $V$, contradicting Lemma~\ref{lem:pty-1}.\vs
	
We then consider the linear class {\bf(a)}. If the center $\Z(\SL_a(q))\lhd G_\0 $ is transitive on $\calD(\0)$, then $r \leqslant q-1$ and $q^2-1 \leqslant v-1\leqslant\lambda(v-1)=r(k-1)<r^2 \leqslant (q-1)^2$, not possible, so that, it acts trivially on $\calD(\0)$. Suppose $ (a,q)\ne (2,2),\,(2,3) $. Then $G_\0^{\calD(\0)}$ is an almost simple 2-transitive group with socle $ \PSL_a(q) $ of degree $ r $.\vs
	
For the special cases, if $ (a,q)=(2,2) $, then $ k<v=4 $, leading to trivial case by Lemma \ref{k<=v/2}. If $(a,q)=(2,3)$, $ G_\0\leq\GL_2(3) $, so $G_\0^{\calD(\0)}$ is affine with degree $ r=4 $, since $ r\geq 4 $ and $ r\div |\PSL_2(3)|=12 $. This can be subsumed under (a.1) for handling.\vs
	
{\bf(a.1)} Assume $r={q^a-1\over q-1}$. The group $ G_\0^{\calD(\0)} $ with socle $ \PSL_a(q) $ has only two $ 2 $-transitive representations of degree $ {q^a-1\over q-1} $. The set $ \calD(\0) $ is regarded as the set of either 1-subspaces or hyperplanes of $ V=\FF_q^a $. Thus, $ \calB$ is equal to the orbit $\calD(\0)^V $, i.e., the set of all cosets of subspaces in $ \calD(\0) $. That is, $\calD$ is either $\AG_1(a,q)$ or $\AG_{a-1}(a,q)$.
By Example \ref{AG}, $\calD $ is $ G $-locally $ 2 $-transitive for each $ G $ with $ \ASL_d(q)\lhd G\leqslant \AGammaL_d(q) $. \vs
	
{\bf(a.2)} Assume $r\ne {q^a-1\over q-1}$. By \cite[Thm.\,5.3]{as2trans}, the cases are as follows.
\[\mbox{\small $\begin{array}{c|lllllllll}\hline
G_\0^{(\infty)}&\SL_2(4)&\SL_2(5)&\SL_2(7)&\SL_2(9)&\SL_3(2)&\SL_4(2)&\SL_2(11)&\SL_2(8)\\ \hline
r&6&5&7&6&8&8&11&28\\ \hline
v=q^a&4^2&5^2&7^2&9^2&2^3&2^4&11^2&8^2 \\ \hline
\end{array}$}\]
Further, since $r^2>rk>\lambda v \geqslant v=q^a$, only the following four need to consider\,:
\[(r,v)=(6,4^2),\,(8,2^3),\,(8,2^4),\,(28,8^2).\]
In the first three cases, there is no suitable value for $ k $ satisfying $ \lambda(v-1)=r(k-1) $ and $ k<r $. In the last case, $ 63\lambda=28(k-1) $, so $ 9\lambda=4(k-1) $, and so $ k=10 $ or $ 19 $, which contradicts that $k$ is a $ 2 $-power by Lemma~\ref{lem:pty-1}.
\end{proof}

\begin{lemma} \label{G_0_extraspecial}
Under {\rm Hypothesis \ref{hypo-1}}, the group $G$ is not of extraspecial classes.
\end{lemma}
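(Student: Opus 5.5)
The extraspecial classes of affine 2-transitive groups (Hering's list, see \cite[Appendix~1]{ha2trans}) have small degrees:
\[ v\in\{5^2,7^2,11^2,23^2,3^4,29^2,59^2\}. \]
In each case $G_\0\leqslant N_{\GL_f(p)}(R)$, where $R$ is an extraspecial group of order $2^{1+2}$ or $2^{1+4}$ acting irreducibly on $V$, and $G_\0/R$ lies in $\ZZ_{p-1}\circ(\S_3$ or $\S_5$-type$)$. I would take each degree in turn and eliminate it by arithmetic, using the structure of $G_\0$ where the arithmetic alone is not enough.

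\textbf{Step 1 (constraints).} By Lemma~\ref{lem:pty-1}, $k=p^i$ with $1\leqslant i<f$, and the block $\b$ is a $G_{\0\b}$-invariant subspace $M$ with $G_\b=M{:}G_{\0\b}$. Here $f\in\{2,4\}$, so $k=p$ when $f=2$, and $k\in\{3,9,27\}$ when $v=3^4$. Lemma~\ref{k<=v/2} gives $3\leqslant k\leqslant r$. The identities $\lambda(v-1)=r(k-1)$ and $vr=bk$ hold. Moreover $r=|G_\0:G_{\0\b}|$ divides $|G_\0|$, and $G_\0^{\calD(\0)}$ is $2$-homogeneous of degree $r$.

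\textbf{Step 2 (forcing $G_{\0\b}$ reducible).} $G_{\0\b}$ must fix the $i$-subspace $M$, so it is reducible on $V$. The group $R$ is irreducible, so $G_{\0\b}\not\geqslant R$. Any normal subgroup of $G_\0$ either acts trivially on $\calD(\0)$ or is transitive on it. Hence the image of $R$ in $G_\0^{\calD(\0)}$ is nontrivial, and so $R$ acts transitively on $\calD(\0)$. This forces $r$ to divide $|R|$ times a factor coming from the center, so $r$ is a power of $2$, possibly times a small factor. If $R$ acts trivially instead, then $R\leqslant G_{\0\b}$, which contradicts reducibility.

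Since $G_\0^{\calD(\0)}$ is $2$-homogeneous with a nontrivial normal $2$-subgroup, it is affine of $2$-power degree. So $r\in\{4,8,16\}$, with $r\leqslant|R/\Z(R)|\leqslant 16$.

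\textbf{Step 3 (arithmetic elimination).} Combining Steps 1 and 2 with $\lambda(v-1)=r(k-1)$ and $\lambda\geqslant 1$ gives
\[ v-1\leqslant r(k-1)\leqslant 16(k-1). \]
For $f=2$ we have $k=p$, so $p^2-1\leqslant 16(p-1)$, i.e.\ $p\leqslant 15$. This leaves $p\in\{5,7,11\}$.
\begin{itemize}
\item $v=25$, $k=5$: $24\lambda=4r$, so $r=6\lambda$, which is not a power of $2$ in $\{4,8,16\}$.
\item $v=49$, $k=7$: $48\lambda=6r$, so $r=8\lambda$ and $r\in\{8,16\}$. Then $vr=bk$ gives $b=56$ or $112$.
\item $v=121$, $k=11$: $120\lambda=10r$, so $r=12\lambda$, which is impossible.
\item $v=81$, $k\in\{3,9,27\}$: $80\lambda=r(k-1)$. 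This gives $k=9$, $r=10\lambda$, which is impossible; $k=3$, $r=40\lambda$, which exceeds $16$; and $k=27$, $r\geqslant k=27>16$, which is impossible.
\end{itemize}
All other degrees are excluded directly by the bound $p\leqslant 15$.

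\textbf{Step 4 (main obstacle: $v=7^2$).} Here $G_\0\leqslant N_{\GL_2(7)}(Q_8)$, of shape $(\ZZ_6\circ 2^{1+2}).\S_3$. A $2$-homogeneous action of degree $8$ or $16$ cannot be induced from $R=Q_8$, because $R/\Z(R)\cong 2^2$ has order only $4$. So $R$ cannot be transitive on $\calD(\0)$ when $r\geqslant 8$, contradicting Step 2. Hence this case is impossible, and a quick \MA{} check can confirm it.

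Step 2 is the delicate part of the argument: one must handle correctly the center $\Z(R)$ and the scalar subgroup acting on $\calD(\0)$. If the clean argument fails there, the fallback is to check directly with \MA{}, for each of the finitely many groups $G_\0$, that no subgroup of index $r$ fixes a subspace of order $k$.
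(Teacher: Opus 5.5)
\textbf{Same route as the paper, with one unproved step that decides $v=7^2$.} Your skeleton matches the paper. $R$ cannot act trivially on $\calD(\0)$, since then $R\le G_{\0\b}$ would make $G_{\0\b}$ irreducible, contrary to Lemma~\ref{lem:pty-1}. So $R$ is transitive on $\calD(\0)$, and one bounds $r$ by $|R/\Z(R)|$.

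The gap is that this bound is asserted but never proved; you flag it yourself and offer \MA{} as a fallback. It is load-bearing. Transitivity of $R=\Q_8$ alone gives only $r\div 8$. For $v=7^2$, the parameters $(v,b,r,k,\lambda)=(49,56,8,7,1)$ of the affine plane $\AG_1(2,7)$ pass every arithmetic test you use, including $c=1$ in Lemma~\ref{c}. So your Step~4 rests entirely on the unproved claim. Separately, the sentence that $r$ divides $|R|$ ``times a factor coming from the center'' is confused: transitivity of $R$ gives $r\div|R|$ outright, with no extra factor.

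The missing step takes one line, using the dichotomy you already invoke. $\Z(R)=\langle -1\rangle$ is characteristic in $R\lhd G_\0$, hence normal in $G_\0$. So it is either trivial or transitive on $\calD(\0)$, and transitivity would force $r\le 2<4$. Alternatively, $-1$ fixes every subspace, and every block through $\0$ is a subspace by Lemma~\ref{lem:pty-1}. The paper uses exactly this fact, stated without comment.

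\textbf{With the bound in place, the finish is short.}
\begin{itemize}
\item For $R=\Q_8$ you get $r=4$. Your own inequality $p^2-1=v-1\le r(k-1)=4(p-1)$ then gives $p\le 3$. So all four degrees $5^2,7^2,11^2,23^2$ are excluded at once and Step~4 is unnecessary. The paper does the same via $r^2\ge rk>\lambda v\ge v$.
\item For $v=3^4$, the paper uses $r^2>81$ to force $r=16$. Hence $5\lambda=k-1$ and $k\in\{6,11\}$, neither a power of $3$. Your case check over $k\in\{3,9,27\}$ is equally valid.
\end{itemize}
Finally, $29^2$ and $59^2$ are not extraspecial-class degrees; they occur with $\SL_2(5)$ in the exceptional classes (Lemma~\ref{G_0_exceptional}). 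Listing them is harmless since you exclude them anyway.
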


\begin{proof}
By \cite[Appendix~1]{ha2trans}, there are a few exceptional small 2-transitive groups $G=V{:}G_\0=\ZZ_p^f{:}G_\0$ such that $G_\0\leqslant \N_{\GL_d(p)}(R)$, where $R$ is irreducible on $V$ with one of the following holding\,:
\begin{itemize}
\item[(1)] $R=\Q_8\triangleleft G_\0$, and $|V|=v=5^2$, $7^2$, $11^2$ or $23^2$\,; or\vskip0.03in
\item[(2)] $R=2^{1+4}=\Q_8\circ\D_8\triangleleft G_\0 $, and $|V|=v=3^4$.
\end{itemize}

Note that $R \triangleleft G_\0$ and $G_\0$ is primitive on $\calD(\0)$.
If $R$ acts trivially on $\calD(\0)$, then $R \leqslant G_{\0\b}$, and thus $G_{\0\b}$ is irreducible on $V$, too. It is impossible by Lemma~\ref{lem:pty-1}. Hence $R$ acts transitively on $\calD(\0)$. For (1), $\Z(R)=\ZZ_2$ acts trivially on $\calD(\0)$, so $r \div |R/\Z(R)|=4$, and thus $r=4$. Then $ 4^2=r^2\geq rk>\lambda v $, a contradiction. For (2), $\Z(R)=\ZZ_2 $ acts trivially on $\calD(\0)$, so $r \div |R/\Z(R)|=16$.
Since $r^2>v=3^4$ and $\lambda(v-1)=r(k-1)$, it follows that $r=16$, $5\lambda=k-1$, and so $k=6$ or $11$, which contradicts that $k$ is a $ 3 $-power by Lemma~\ref{lem:pty-1}.
\end{proof}

\begin{lemma} \label{G_0_exceptional}
Under {\rm Hypothesis \ref{hypo-1}}, if $G$ is of exceptional classes, then $\calD$ is the unique design $\AG_3(4,2)$ and $(G,G_{\0},G_{\0\b},G_{\b})=(2^4{:}\A_7,\A_7,\PSL_3(2),2^3{:}\PSL_3(2))$. Further, G is locally $ 2 $-transitive on $ \calD $.
\end{lemma}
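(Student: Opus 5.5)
I will go through the list of exceptional affine $2$-transitive groups in \cite[Appendix~1]{ha2trans}. In each case $G_\0$ is one of the following:
\begin{itemize}
\item $\SL_2(5)\lhd G_\0$ with $v\in\{9^2,11^2,19^2,29^2,59^2\}$;
\item $2^{1+4}$-type groups and $\A_6$ or $\A_7$ acting on $2^4$;
\item $\SL_2(13)$ on $3^6$;
\item $\PSU_3(3)$ on $2^6$, already contained in $\G_2(2)=\Sp_6(2)$-type, but listed separately.
\end{itemize}
For each group I will impose the following necessary conditions, which come from earlier results and are cheap to apply:
\begin{itemize}
\item $\lambda(v-1)=r(k-1)$;
\item $r^2>rk>\lambda v\geq v$;
\item $k=p^i$ is a proper nontrivial $p$-power (Lemma~\ref{lem:pty-1});
\item $k\leq v/2$ (Lemma~\ref{k<=v/2});
\item $G_\0^{\calD(\0)}$ is a $2$-homogeneous group of degree $r$, so $r$ is the degree of a $2$-homogeneous action of a quotient of $G_\0$.
\end{itemize}
The guiding principle, used throughout Lemma~\ref{lem:SL} and Lemma~\ref{G_0_extraspecial}, is this. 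Any normal subgroup of $G_\0$ that is irreducible on $V$ must act nontrivially on $\calD(\0)$; otherwise $G_{\0\b}$ would contain it, would be irreducible on $V$, and this would contradict the existence of the $G_{\0\b}$-invariant subspace $M$ in Lemma~\ref{lem:pty-1}.

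For the $\SL_2(5)$ cases, the centre acts trivially on $\calD(\0)$ by primitivity. The same argument as in Lemma~\ref{lem:SL} applies: a transitive central action would give $r\le p-1$, and hence $r^2<v$. So $\SL_2(5)$ induces $\A_5$ on $\calD(\0)$, and $r\in\{5,6\}$, possibly with an extra solvable factor giving degree $r$ dividing $|G_\0:\SL_2(5)|$ times small numbers. This forces $r^2\le 36<v$, which is a contradiction. For $\SL_2(13)$ on $3^6$, the $2$-transitive degrees of $\PSL_2(13)$ are $14$, giving $r=14$ and $r^2=196<729$, again a contradiction. For $\PSU_3(3)\lhd G_\0$ on $2^6$, the only $2$-transitive degree is $r=28$. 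Then $63\lambda=28(k-1)$ forces $k\in\{10,19,28\}$, none of which is a $2$-power, so this case is excluded.

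The remaining case, which I expect to be the main one, is $v=2^4$ with $G_\0\in\{\A_6,\A_7\}$. Note that $\A_6\cong\Sp_4(2)'$ is treated as exceptional here. For $\A_6$, the $2$-transitive degrees are $6$ and $10$, but $r^2>16$ and $15\lambda=r(k-1)$ with $k\in\{4,8\}$ need $r$ divisible by $5$ and compatible with $k$:
\begin{itemize}
\item $r=10$, $k=4$ gives $\lambda=2$;
\item $r=6$ needs $5\mid k-1$, which fails for $k\in\{4,8\}$.
\end{itemize}
So $(16,40,10,4,2)$ is the candidate. A block would then be a $2$-subspace $M$ with $G_{\0\b}=(\A_6)_M$ of index $10$, which means $M$ is stabilized by a subgroup $3^2{:}4$. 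I would check (by \MA, or directly because $3^2$ acts fixed-point-freely on $V\setminus\{\0\}$) that no $3^2{:}4$ in $\A_6<\GL_4(2)$ fixes a $2$-subspace; this excludes the case.

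For $\A_7$ on $2^4$, the $2$-transitive degrees are $7$ and $15$. Here $r=15$ gives $\lambda=k-1$ with $k\in\{4,8\}$. The stabilizer of index $15$ is $\PSL_3(2)$, which fixes a vector or a hyperplane; correspondingly $\A_7$ has two classes of $\PSL_3(2)$. Only the class fixing a $3$-subspace $M$ gives a block $M$ with $k=8$. The class fixing a $1$-space gives $k=2$ and is excluded. This yields $(v,b,r,k,\lambda)=(16,30,15,8,7)$ with $G_\b=2^3{:}\PSL_3(2)$. I would confirm via Construction~\ref{cons} and \MA{} that this is $\AG_3(4,2)$, and that both local actions, $\A_7$ on $15$ points and $\AGL_3(2)$ on $8$ points, are $2$-transitive. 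Finally, $r=7$ gives $15\lambda=7(k-1)$, so $k\ge16$, which is impossible.

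The main obstacle is the bookkeeping for the $2^4$ cases: excluding $\A_6$ with $r=10$, and identifying the correct $\PSL_3(2)$ class for $\A_7$. I would settle both by explicit computation.
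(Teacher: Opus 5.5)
Your proposal is correct. For the $\SL_2(5)$, $\SL_2(13)$ and $\PSU_3(3)$ cases it is the paper's argument: $r\le 6$ (resp.\ $r=14$) is incompatible with $r^2>v$, and $9\mid k-1$ is incompatible with $k$ being a power of $2$. In the $\SL_2(5)$ case you can drop the discussion of scalars and of an ``extra solvable factor''. Once the centre acts trivially, $\A_5$ is a non-abelian minimal normal subgroup of the primitive group $G_\0^{\calD(\0)}$. That group is therefore almost simple with socle $\A_5$, so $r\in\{5,6\}$.

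On $V=2^4$ your route differs from the paper's. The paper lists $b\in\{20,24,40\}$ for $\A_6$ and $b\in\{28,30,60\}$ for $\A_7$. It then discards all but $b=30$ by a \MA{} search for subgroups of index $b$ with an orbit of length $k$.

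You instead first use integrality of $\lambda$, which already removes $b=20,24,28$. You then apply Lemma~\ref{lem:pty-1}: $G_{\0\b}=(G_\0)_M$ is a subgroup of index $r$ in $G_\0$ leaving invariant a subspace $M$ with $|M|=k$. The remaining exclusions become questions about which subspaces $3^2{:}4<\A_6$ and the two classes of $\PSL_3(2)<\A_7$ leave invariant. For $\A_7$, the identification $\calD=\AG_3(4,2)$ and the local $2$-transitivity then follow by hand. This is more conceptual and almost computer-free, whereas the paper needs \MA{} in both $2^4$ cases.

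Two details need repair.

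First, the alternative reason you give for $\A_6$, that $3^2$ acts fixed-point-freely on $V\setminus\{\0\}$, is false. A group of order $9$ cannot act semiregularly on $15$ points. The weaker fact $C_V(3^2)=0$ does not help either, since it does not stop $3^2$ from permuting the three non-zero vectors of an invariant $2$-space.

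In fact, take $3^2=\l(123),(456)\r$ acting on the heart of $\FF_2^6$. Then $V=U_1\oplus U_2$, where $U_1=C_V((456))$ and $U_2=C_V((123))$ are non-isomorphic irreducible $3^2$-modules. So $3^2$ leaves exactly two $2$-spaces invariant. The element $(1425)(36)$ of order $4$ in $3^2{:}4$ interchanges them, and this is the correct direct argument. Equivalently, the stabilizers in $\A_6=\Sp_4(2)'$ of $2$-spaces have order $24$ (totally isotropic) or $18$ (non-degenerate), so none has index $10$.

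Second, for $\A_7$ with $r=15$ you never explicitly exclude $k=4$. This requires showing that neither class of $\PSL_3(2)$ fixes a $2$-space. Since $\PSL_3(2)$ has no non-trivial homomorphism to $\GL_2(2)\cong\S_3$, an invariant $2$-space would be centralized. But the stabilizer of a non-zero vector $v$ is transitive on the other $14$ non-zero vectors, because $\A_7$ is $2$-transitive on $V\setminus\{\0\}$. And the stabilizer of a hyperplane $H$ acts faithfully, hence irreducibly, on $H$, while every $2$-space meets $H$ non-trivially.
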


\begin{proof}
By \cite[Appendix~1]{ha2trans}, we have the following exceptional classes to consider.\vs

{\bf(a)} $ |V|=2^6 $ and $ G_\0 =\G_2(2)'=\PSU_3(3)<\Sp_6(2) $. In this case, the $ 2 $-transitive permutation representation of $ G_\0 $ is of degree $ r=28 $. By Lemma \ref{lem:pty-1}, $ k $ is a $ 2 $-power, and it is with $3 \leqslant k <r$, so $k=4$, $8$ or $16$. It follows by $ r(k-1)=\lambda(v-1)=63\lambda $ that $ \lambda $ is a non-integer. Hence this case cannot occur.\vs

{\bf(b)} $ |V|=p^d\in\{9^2,11^2,19^2,29^2,59^2\} $ and $ \SL_2(5)=G_\0^{(\infty)}\lhd G_\0<\SL_2(p^{d/2}) $. In this case, $r=5$ or $6$. Then $v \leqslant \lambda v < rk < r^2 \leqslant 6^2$, not possible.\vs

{\bf(c)} $ |V|=2^4 $ and $G_0=\A_6 = \Sp_4(2)'< \SL_4(2)$. In this case, $ r=6 $ or $10$ ($ \A_6\cong\PSL_2(9) $). By Lemma~\ref{lem:pty-1}, $ k $ is a $ 2 $-power, and it is with  $3 \leqslant k < r$, so $k = 4$ or $8$, and the possible values of $b = vr/k$ are $20$, $24$, or $40$. However, a computation in $ \MA $ shows that $G$ has no subgroup of index $b$ with an orbit of length $k$ on $\calP$. Hence this case cannot occur.\vs

{\bf(d)} $ |V|=2^4 $, $G_0=\A_7<\A_8=\SL_4(2)$. In this case, $ G_0 $ has $ 2 $-transitive permutation representations of degree $r=7$ or $15$. Similar to {\bf(c)}, we have $k = 4$ or $8$, $b = vr/k=28$, $30$ or $60$, and by $ \MA $, $b \ne 28$ or $60$. For the remaining case, $(v,b,r,k,\lambda)=(16,30,15,8,7)$, and $ \MA $ yields $ H,K\leqslant G=\ZZ_2^4{:}\A_7 $ such that \[\mbox{$ H=\A_7,\, K=2^3{:}\GL_3(2) $ and $ H\cap K=\GL_3(2)$.}\] The triple $ (G,H,K) $ satisfies the conditions of Construction \ref{cons}, thereby giving rise to a $ G $-locally $ 2 $-transitive design $ \calD $ with above parameters. Since $ k=2^3 $, by Lemma \ref{lem:pty-1}, $ \calD $ is a subdesign of $ \AG_3(4,2) $. Since $ r=15 $, it follows that $\calD=\AG_3(4,2)$.\vs

{\bf(e)} $ |V|=3^6 $, $G_0 = \SL_2(13)<\Sp_6(3)$. In this case, $r=14$. Then $v \leqslant \lambda v < rk < r^2 \leqslant 14^2$, not possible.
\end{proof}

\section{The almost simple case}\label{sec-as}
In this section we study case {\bf (C)}. In other word, we study designs that satisfy the following hypothesis.

\begin{hypothesis}\label{hypo-2}
{\rm Let $\calD=(\calP,\calB,\calI)$ be a $G$-locally 2-homogeneous non-symmetric design such that $G$ is an almost simple 2-transitive group on $ \calP $ with socle $ T $. Let $ (\a,\b)\in\calP\times\calB $ be a flag. Note that $ r\geq 4 $ by Lemma \ref{k<=v/2}.}
\end{hypothesis}

The main result is as follows.

\begin{proposition}\label{AS}
Under {\rm Hypothesis~\ref{hypo-2}}, one of the following holds{\rm\,:}
\begin{itemize}
\item[\rm(1)] $ \calD=\PG_1(d-1,q) $, $ d>3 $, and $ \PSL_d(q)\lhd G\leqslant \PGammaL_d(q) $.\vskip0.03in
\item[\rm(2)] $\calD=U_H(q)$, $\PSU_3(q) \lhd G\leqslant \PGammaU_3(q)$ and $ \PGU_3(q)\leq G\l\phi^2\r $, where $\phi$ is the field automorphism.\vskip0.03in
\item[\rm(3)] $ \calD $ and $ (G,G_\a,G_{\a\b},G_\b) $ are listed in {\rm Table \ref{table_as}}.
\end{itemize}
\noindent Further, the case where $ G $ is not locally $ 2 $-transitive on $ \calD $ only occur in {\rm Row 1, Table \ref{table_as}}.

\begin{table}[htbp]
\footnotesize
\renewcommand\arraystretch{1.3}
\centering
\caption{~}\label{table_as}
\begin{tabular}{cccccccccc}
\toprule[1.5pt]
$v$ & $b$ & $r$ & $k$ & $\lambda$ & $G$ & $G_\a$ & $G_{\a\b}$ & $G_{\b}$ & {\rm Remark} \\
\midrule[1pt]
$8$ & $14$ & $7$ & $4$ & $3$ & $\PSL_2(7)$ & $7{:}3$ & $3$ & $\A_4$ & $\calD=\AG_2(3,2)$\\
$12$ & $22$ & $11$ & $6$ & $5$ & $\M_{11}$ & $\PSL_2(11)$ & $\A_5$ & $\A_6$ \\
$15$ & $35$ & $7$ & $3$ & $1$ & $\A_7$ & $\PSL_3(2)$ & $\S_4$ & $(\A_4 \times 3){:}2$ & $\calD=\PG_1(3,2)$\\
$21$ & $56$ & $16$ & $6$ & $4$ & $\PSL_3(4).\calO$ & $2^4{:}\A_5.\calO$ & $\A_5.\calO$ & $\A_6.\calO$ & $\calO=1,2$\\
$22$ & $77$ & $21$ & $6$ & $5$ & $\M_{22}.\calO$ & $\PSL_3(4).\calO$ & $2^4{:}\A_5.\calO$ & $2^4{:}\A_6.\calO$ & $\calO=1,2$\\
\bottomrule[1.5pt]
\end{tabular}
\end{table}
\end{proposition}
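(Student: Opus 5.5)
We go through the list of almost simple $2$-transitive groups $G$ with socle $T$ of degree $v$ in \cite[Thm.\,5.3]{as2trans}, case by case, using the same local-to-global filter as in the affine case. Fix a flag $(\a,\b)$. The basic inputs are the following. First, $G_\a^{\calD(\a)}$ is a $2$-homogeneous group of degree $r$, and $G_\a$ is the (known) point stabilizer of a $2$-transitive action. Second, $\calD$ is non-symmetric, so by Lemma \ref{c} it is quasi-symmetric with intersection numbers $0$ and $c$; by Lemma \ref{k<=v/2} we also have $4\le r$ and $3\le k\le v/2$. Third, the parameter relations $\lambda(v-1)=r(k-1)$ and $\lambda v<rk<r^2$ force $r>\sqrt v$. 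Fourth, by Lemma \ref{socflagtrans} the socle $T$ is already flag-transitive, so $r=|T_\a:T_{\a\b}|$ and $k=|T_\b:T_{\a\b}|$.

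\textbf{Infinite families.} For $T=\A_v$, $G_\a$ is $\A_{v-1}$ or $\S_{v-1}$, and its $2$-homogeneous actions have degree $v-1$ (or a few small exceptions). The case $r=v-1$ forces $G_{\a\b}$ to be the two-point stabilizer, so $G_\b$ contains $G_{\a\b}$ with index $k$. The overgroups of $\A_{v-2}$ then give $k\in\{2,v-2\}$ or a trivial design, which is excluded. For $T=\PSL_d(q)$ acting on points of $\PG(d-1,q)$, $G_\a$ is a parabolic subgroup. Its unipotent radical $q^{d-1}$ acts on $\calD(\a)$ either regularly, so that $r=q^{d-1}$, or trivially, so that $r=(q^{d-1}-1)/(q-1)$ (apart from a handful of small exceptional degrees). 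In the second case $\calD(\a)$ is identified with the lines or the hyperplanes through $\a$. The blocks through $\a$ are then unions of such objects, and $G_\b$-invariance forces $\b$ to be a line, giving $\PG_1(d-1,q)$, or a hyperplane, which would be symmetric. The regular case $r=q^{d-1}$ produces $\overline{\PG}$-type blocks, which are symmetric and hence excluded. For $T=\PSU_3(q)$, $G_\a=q^{1+2}{:}\dots$; the centre $q$ must act trivially, and we get $r=q^2$ with $G_{\a\b}\ge Q'$. Since $Q'$ fixes exactly one further point, we get $k=q+1$, and $\b$ is a Hermitian line. This gives $U_H(q)$. The extra condition on $G$, namely $\PGU_3(q)\le G\l\phi^2\r$, comes from requiring $G_\a^{\calD(\a)}\le \AGammaL_1(q^2)$ to be $2$-homogeneous on $q^2$ points, i.e.\ from requiring the full multiplicative group modulo squares. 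The families $\Sz(q)$, $\Ree(q)$ and $\Sp_{2m}(2)$ on $2^{2m-1}\pm2^{m-1}$ points are handled the same way. In $\Sz$ and $\Ree$ the point stabilizers are solvable with no $2$-homogeneous quotient of degree greater than $\sqrt v$. For $\Sp_{2m}(2)$, the stabilizer $\GO^\pm_{2m}(2)$ has only $2$-transitive actions of degree about $2^{2m-2}$ or smaller, and these fail the divisibility conditions. The case $\PSL_2(q)$ on $q+1$ points is included here: a Borel subgroup gives $r\mid q(q-1)$ with an affine or cyclic action, which leaves only tiny $q$. In particular $q=7$ gives the $\AG_2(3,2)$ row.

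\textbf{Sporadic degrees.} These are $\PSL_2(11)$ on $11$ points, $\A_7$ on $15$ points, $\M_{11}$ on $11$ and $12$ points, $\M_{12}$, $\M_{22}$, $\M_{23}$, $\M_{24}$, $\PSL_2(8)$ on $28$ points, $\HS$, $\Co_3$, and the second actions of $\PSL_3(4)$ and so on. For each, we list the $2$-homogeneous actions of $G_\a$ with degree $r>\sqrt v$. Then we solve $\lambda(v-1)=r(k-1)$ together with $c\in\ZZ$ from Lemma \ref{c}, and run the Magma procedure after Construction \ref{cons} on the survivors. This should yield exactly the rows of Table \ref{table_as}. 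Finally we compare $G_\a^{\calD(\a)}$ and $G_\b^{\calD(\b)}$ with $2$-transitivity, which singles out Row 1 as the only case that is not locally $2$-transitive.

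\textbf{Main obstacle.} The hardest step will be the linear case $\PSL_d(q)$. There one must show rigorously that a $G_\a$-orbit structure on blocks through $\a$ forces the blocks to be projective subspaces. We also need to handle the small exceptional $2$-transitive actions of $\PSL_{d-1}(q)$, namely those of $\PSL_2(q)$ for $q\in\{5,7,9,11\}$ and $\PSL_4(2)\cong\A_8$. For these I would first use $r>\sqrt v$ and divisibility, and then fall back on Magma.
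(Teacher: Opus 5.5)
Your overall plan matches the paper's: run through the almost simple $2$-transitive groups, use $r>\sqrt v$, Lemma~\ref{c}, Lemma~\ref{socflagtrans}, and \MA\ for small cases. The linear case, however, contains a false step.

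\textbf{The main gap: the regular case for $T=\PSL_d(q)$.} For $T=\PSL_d(q)$ on $v=\frac{q^d-1}{q-1}$ points, you assert that if the unipotent radical $q^{d-1}\lhd G_\a$ is regular on $\calD(\a)$, then the blocks are complements of hyperplanes, so $\calD$ is symmetric. This is wrong, and it discards Row~4 of Table~\ref{table_as}.
\begin{itemize}
\item For $G=\PSL_3(4)$ on $21$ points, $G_\a=2^4{:}\A_5$ and $G_{\a\b}=\A_5$. So the radical is regular on $\calD(\a)$ with $r=16=q^{d-1}$.
\item Yet $k=6$ (the blocks are hyperovals, $G_\b=\A_6$) and $b=56\ne v$.
\item Your mention of ``second actions of $\PSL_3(4)$'' does not rescue this. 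The $21$-point action is exactly the generic one you already disposed of.
\end{itemize}
Regularity only tells you $r=q^{d-1}$. It says nothing about which $G_{\a\b}$-invariant sets are blocks. What is needed is arithmetic:
\begin{itemize}
\item From $\lambda(v-1)=r(k-1)$ you get $\frac{q^{d-1}-1}{q-1}\mid k-1$, so $k=h\frac{q^{d-1}-1}{q-1}+1$ with $1\le h<q-1$.
\item Write $p^i\,\|\,k$. Using $k\mid vr$ you get $k\mid p^i(q-h)$, hence $hq^{d-2}<k<q^2$ and $d\le 3$.
\item For $d=3$ this forces $h=p^i-1$. If $p^i=\sqrt q$, integrality of $c$ in Lemma~\ref{c} gives $\sqrt q+1\mid 3$. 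So $q=4$, and you land on the $2$-$(21,6,4)$ design.
\item If $p^i\ne\sqrt q$, the case is killed by computing $\gcd(k/p^i,v)$.
\end{itemize}

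\textbf{The case $d=2$.} Here $r=q$ and $\lambda=k-1$, and the conditions you list do not leave ``only tiny $q$''.
\begin{itemize}
\item $(q,k)=(25,10)$ and $(169,65)$ satisfy $3\le k<q$, $k\mid q(q+1)$ and $(q-1)\mid(k-1)(k-2)$.
\item For $G\ge\PGL_2(q)$, the local action $G_\a^{\calD(\a)}\ge\AGL_1(q)$ is $2$-transitive. So the Borel subgroup gives no further restriction.
\item You need the block side. In $\SL_2(q)$ the block stabilizer has order $k(q-1)$, so it lies in a maximal subgroup whose order is divisible by $k(q-1)$. A Borel overgroup would make $\calD$ symmetric.
\item Running through the maximal subgroups of $\SL_2(q)$ reduces to a few small $q$, which are settled by computation.
\end{itemize}

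\textbf{The unitary case.} Your derivation of $k=q+1$ does not work, because $Q'$ fixes no further point: it is semiregular on $\calP\setminus\{\a\}$. Instead, $k=\lambda q+1<q^2$ gives $\lambda<q$. Lemma~\ref{c} then gives $q^2-1\mid\lambda(\lambda-1)$, which forces $\lambda=1$.

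\textbf{The condition on $G$.} The condition $\PGU_3(q)\le G\l\phi^2\r$ comes from cubes, not squares. $G/T$ must fuse the three cosets of the index-$3$ subgroup of $\FF_{q^2}^\times$. Equivalently, the diagonal automorphism $\d$ must lie in $(G/T)\l\phi^2\r$.
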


The proof of this result proceeds by analyzing almost simple 2-transitive groups, which were classified in \cite{HERING1985151}. It is divided into three parts\,: $ T=\PSL_n(q) $ (Lemma \ref{psl}), $ T=\PSU_3(q) $ (Lemma \ref{psu}), and $ T $ is one of the others (Lemma \ref{others}). 

\begin{lemma} \label{psl}
Under {\rm Hypothesis~$\ref{hypo-2}$}, if $T=\PSL_d(q)$, one of the following holds{\rm\,:}
\begin{itemize}
\item[\rm(1)] $\calD=\PG_1(d-1,q)$, $d>3$, and $ \PSL_d(q)\lhd G\leqslant \PGammaL_d(q) $. \vskip0.05in
\item[\rm(2)] $\calD$ is a unique design with parameters $(v,b,r,k,\lambda)=(21,56,16,6,4)$, and \[\mbox{$(G,G_\a,G_{\a \b},G_{\b})=(\PSL_3(4).\calO,2^4{:}\A_5.\calO,\A_5.\calO,\A_6.\calO)$, $\calO=1$ or $2$}.\]
\item[\rm(3)] $\calD$ is the unique design $ \AG_2(3,2) $ with $(v,b,r,k,\lambda)=(8,14,7,4,3)$, and $$(G,G_\a,G_{\a \b},G_{\b})=(\PSL_2(7),7{:}3,3,\A_4).$$
\end{itemize}
\noindent Further, $ G $ is locally $ 2 $-transitive on $ \calD $ in {\rm(1), (2)}.
\end{lemma}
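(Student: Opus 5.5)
We work under Hypothesis~\ref{hypo-2} with $T=\PSL_d(q)$. By the list of almost simple 2-transitive groups \cite[Thm.\,5.3]{as2trans}, the action on $\calP$ is one of: (i) the natural action on points or hyperplanes of $\PG(d-1,q)$, $v=\frac{q^d-1}{q-1}$; (ii) the small exceptional actions $\PSL_2(11)$ on $11$ points, $\PSL_2(7)\cong\PSL_3(2)$ on $8$ points (cosets of $7{:}3$), $\A_7<\A_8\cong\PSL_4(2)$ on $15$ points (treated together with the socle $\PSL_4(2)$ case), and $\PSL_2(8)$ on $28$ points (here $T={}^2G_2(3)'$, which we leave to Lemma~\ref{others}). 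Up to a graph automorphism we may take $\calP$ to be the $1$-subspaces of $V=\FF_q^d$. The plan is to use that $G_\a$ is a parabolic subgroup $q^{d-1}{:}\GL_{d-1}(q)$ (times field automorphisms), and to determine its possible 2-homogeneous actions of degree $r$ on $\calD(\a)$.

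For the natural action, Lemma~\ref{socflagtrans} makes $T$ flag-transitive. The unipotent radical $Q=q^{d-1}$ of $T_\a$ is either in the kernel of $T_\a$ on $\calD(\a)$ or acts transitively. If $Q$ acts transitively, then $r$ divides a power of $p$; combining the affine structure of $G_\a^{\calD(\a)}$ with $r\mid q^{d-1}$ and $\lambda(v-1)=r(k-1)$, $k<r$, $k\le v/2$ (Lemma~\ref{k<=v/2}), this should force small cases, mainly $d=3$, $q=4$, $r=16$. Substitution gives $20\lambda=16(k-1)$, so $(k,\lambda)=(6,4)$, $b=56$. A $\MA$ search via Construction~\ref{cons} with $K=\A_6.\calO$ then produces the design in part (2); we will check by computation that it is unique and that $\calO\le 2$ (the graph and field automorphisms that fix the configuration). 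If instead $Q$ is in the kernel, then $G_\a^{\calD(\a)}$ is a 2-homogeneous quotient of $\GL_{d-1}(q).\calO$. Apart from small exceptions, such as $\PSL_2(q)$ with $q\le11$ acting on $5,6,7,11$ points and $\A_7$ on $15$, this means $\calD(\a)$ is the set of $1$-spaces or hyperplanes of $V/\a$. We will eliminate those small exceptions by the parameter equation and $r^2>\lambda v$, in the same way as in Lemma~\ref{lem:SL}(a.2).

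Now suppose $\calD(\a)$ is identified with the lines through $\a$, or with the hyperplanes containing $\a$. In the first case $G_{\a\b}$ is the stabilizer of a $2$-space $\ell\ni\a$. Maximality of $G_{\a\b}$ in $G_\b$ together with $G_\b\ne G_\a$ gives $G_\b\le G_\ell$. Since $G_\b$ is 2-homogeneous on $\b$ and $G_\ell$ acts 2-transitively on the $1$-subspaces of $\ell$, we get $\b=\ell$, and $\calD=\PG_1(d-1,q)$. For $d=3$ this design is symmetric, so $d>3$. The hyperplane case yields $\PG(d-1,q)$ or its complement; these are symmetric and excluded by the hypothesis. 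Local $2$-transitivity in (1) follows from Example~\ref{ex:Pro-G}(3).

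For the exceptional degrees we work case by case. With $v=8$ and $G=\PSL_2(7)$ or $\PSL_2(7).2$, we get $r=7$ and $(k,\lambda)=(4,3)$; $\MA$ gives $G=\PSL_2(7)$ only, with $G_{\a\b}=3$, giving $\AG_2(3,2)$, which is not locally 2-transitive. The $\PGL_2(7)$ case we rule out by computation. For $v=11$, $r\mid |\A_5|$ forces $r\in\{5,6\}$, and the parameter equation then admits only symmetric designs. The $15$-point case with socle $\PSL_4(2)$ falls under the natural case, while $\A_7$ belongs to Lemma~\ref{others}. The main obstacle is the first step: classifying the 2-homogeneous actions of the parabolic $G_\a$ when $Q$ acts nontrivially, for general $d$ and $q$. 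Here I expect to need the bound $r^2>v$ together with the restriction $r\mid q^{d-1}$ to confine everything to $(d,q)=(3,4)$, and this reduction has to be argued carefully.
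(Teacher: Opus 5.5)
\textbf{There is a genuine gap.} Your split, according to whether the unipotent radical $Q\cong q^{d-1}$ of $G_\a$ acts trivially or transitively on $\calD(\a)$, is the same as the paper's. However, the transitive branch, which is where parts (2) and (3) come from, is not proved, and the tools you propose for it cannot do the job.

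\emph{Pinning down $r$ and reducing to $d\le3$.} You only record $r\mid q^{d-1}$, but you need $r=q^{d-1}$. This holds because $Q$ is a minimal normal subgroup of $G_\a$, so it acts faithfully once it acts nontrivially, and an abelian normal subgroup of the primitive group $G_\a^{\calD(\a)}$ is regular. With $r=q^{d-1}$, the equation $\lambda(v-1)=r(k-1)$ gives $\frac{q^{d-1}-1}{q-1}\mid k-1$. So $k=h\frac{q^{d-1}-1}{q-1}+1$ with $1\le h<q-1$. Let $p^i$ be the exact power of $p$ dividing $k$, so $i<f$. Then $k\mid vr$ forces $k\mid p^i(q-h)$, hence $hq^{d-2}<k<q^2$ and $d\le 3$. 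For $d=3$ you then need three further steps: $h=p^i-1$; integrality of $c$ from Lemma~\ref{c} when $p^i=\sqrt q$, which gives $q=4$; and a gcd computation excluding $p^i\ne\sqrt q$.

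\emph{The case $d=2$.} This is an infinite family that your bounds do not touch. Here $r=q$, $v=q+1$ and $\lambda=k-1$, so $r^2>v$ is automatic. Even the extra conditions $k\mid q(q+1)$, $3\le k<q$ and $q-1\mid (k-1)(k-2)$ still leave, for example, $(q,k)=(7,4),(11,6),(25,10)$. The paper settles $d=2$ by placing $K_\b$ (of order $k(q-1)$, containing $K_{\a\b}\cong\ZZ_{q-1}$) inside a maximal subgroup of $\SL_2(q)$. It runs through the classes of \cite[Table~8.1]{low-dimensional}, and then checks subgroups in the few survivors: $\PSL_2(q)$ and $\PGL_2(q)$ for $q=5,7,11$, together with $\PSL_2(25)$ and $\PSigmaL_2(25)$. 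So your expectation that everything collapses to $(d,q)=(3,4)$ is false: $(d,q)=(2,7)$ survives, and it is exactly part (3). Your separate $v=8$ computation does not fill this hole. $\PSL_2(7)$ on $8$ points is not an exceptional action but the natural action on $\PG(1,7)$, and it is the general $d=2$ analysis that shows no other $q$ occurs.

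\textbf{The trivial branch and bookkeeping also have problems.} When $Q$ acts trivially, ``maximality of $G_{\a\b}$ in $G_\b$ together with $G_\b\ne G_\a$ gives $G_\b\le G_\ell$'' does not follow. You need either the fact that overgroups of the flag stabilizer $G_{\a\ell}$ are parabolic, or an orbit argument. In the latter, $\b\setminus\{\a\}$ is a union of $G_{\a\ell}$-orbits, namely $\ell\setminus\{\a\}$ and $\calP\setminus\ell$, and $k\le v/2$ leaves only $\b=\ell$.

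Your list of $2$-homogeneous quotients of $G_\a/Q$ omits the solvable ones on which $\SL_{d-1}(q)$ acts trivially. Examples are $7{:}3$ for $G=\PGammaL_2(8)$ or $\PGammaL_3(8)$; for $d=2$ these are all the quotients. They die by the parameter equation or by $r^2>v$, but they must be treated. Also, ``$\A_7$ on $15$'' is not a quotient of the Levi factor. The relevant exception there is $\GL_4(2)\cong\A_8$ on $8$ points.

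The paper avoids classifying these quotients altogether. If $\mathcal N_1$ fixes every block through $\a$, then its conjugates $\mathcal N_\ell$ fix every block through $\l e_\ell\r$. So $\b$ is invariant under $\l\mathcal N_1,\dots,\mathcal N_s\r$, and hence $\b$ is the set of all $1$-spaces of its span $V_\b$. Quasi-symmetry (Lemma~\ref{c}) then forces $\dim V_\b\in\{2,d-1\}$.

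Finally, $\PSL_2(8)$ on $28$ points cannot be deferred to Lemma~\ref{others}. That lemma assumes $T\ne\PSL_d(q)$ and treats $\Ree(q)$ only for $q>3$. The case must be killed here, and that is easy: $G=\PGammaL_2(8)$ and $G_\a=9{:}6$, which has no $2$-homogeneous action of degree at least $4$, contradicting $r\ge4$.
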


\begin{proof}
By {\rm Hypothesis~$\ref{hypo-2}$}, $ G $ is 2-transitive on the set $ \calP $ of $ v $ points with $ \soc(G)=T $. Thus, by \cite[Thm.\,5.3]{as2trans}, either $v=\frac{q^d-1}{q-1}$, or one of the following two cases occurs.

The first case is $(T,v)=(\PSL_2(8), 28)$, where $ T $ does not acts $2$-transitively on $28$ points but $\PGammaL_2(8)$ does. By \cite{ATLAS}, $(G, G_\a)=(\PGammaL_2(8),\ZZ_9{:}\ZZ_{6}).$ Note that $ G_\a=\ZZ_9{:}\ZZ_{6} $ can only act $2$-homogeneously on $3$ points. Thus, $ k<r=3 $ and $ \calD $ is trivial.

The second case is $(T,T_\a,v)=(\PSL_2(11),\A_5,11)$. In this case, $ r=5 $ or $ 6 $. If $r=5$, then $ k=3$ or $4$, and $ bk=vr=5 \cdot 11  $, not possible. If $r=6$, then $k=3,4$ or $5$ and $10\lambda=\lambda(v-1)=r(k-1)=6(k-1)$, not possible.\vs

In the following, we consider the generic case $v=\frac{q^d-1}{q-1}$. In this case, $ G\leq\PGammaL_d(q) $ and $ \calP $ is regarded as the set of either $ 1 $-subspaces or hyperplanes of $ V=\mathbb{F}_q^d $. We remark that the two situations are treated similarly, and the resulting designs are isomorphic. Here, we assume the former holds, that is, $ \calP=\{\l v\r\mid v\in V\setminus\{\0\}\} $.\vs

Let $ K=\SL_d(q) $ be the preimage of $ T $ under the projective homomorphism, and $ L=K.\calO\leq\GammaL_d(q) $ be the preimage of $ G $, where $ \calO $ is generated by the diagonal and the field automorphism. Then $ K $, $ L $ act unfaithfully on $ \calP $, and, respectively, induces an automorphism group of $ \calD $, i.e., $ K^\calP=T $, $ L^\calP=G $. For convenience, we consider $ K,L $ rather than $ T,G $. Take a point $ \a\in\calP $. The stabilizer $ L_\a $ is equal to $ K_\a.\calO $, where $ K_\a $ is a parabolic subgroup as follows\,:
\[K_{\a}=\left<
\left[\begin{array}{cc}
a&0\\ B&C
\end{array}\right]
\Bigg|\ a=|C|^{-1},\ B\in\mathbb{F}_q^{d-1},\ C\in \GL_{d-1}(q) \right>\cong q^{d-1}{:}\GL_{d-1}(q).\] In particular, there is a minimal normal subgroup $\mathcal{N}_1\lhd K_\a  $, referred to as the {\it unipotent radical}, which is isomorphic to $ q^{d-1} $ and consists of matrices $ N_1 $ of form $$N_1=\left[	
\begin{array}{c|ccc}
	1 & 0 & \cdots & 0\\\hline
	n_2  & 1 & &  \\
	\vdots & & \ddots &  \\
	n_d    & & & 1\\
\end{array}
\right].$$ Note that $ \mathcal{N}_1 $ is also (minimal) normal in $ L_\a=K_a.\calO $. Thus, the projective image $\overline{\mathcal{N}}_1(\cong \mathcal{N}_1) \lhd  G_{\a}$. Since $G$ is locally 2-homogeneous on $ \calD $, it follows that either\vskip0.02in

{\bf(a)} $\mathcal{N}_1$ fixes all blocks in $\calD(\a)$\,; or\vskip0.02in

{\bf (b)} $\mathcal{N}_1$ is transitive on $\calD(\a)$.\vs\vs

\noindent\textbf{(a)} $\mathcal{N}_1$ fixes all blocks in $\calD(\a)$.

Let $ (\a,\b) $ be a flag with $ \a=\l e_1\r $, $\b=\{\langle e_1 \rangle, \cdots, \l e_s\r,\,\l v_{s+1}\r,\ldots,\langle v_k \rangle \}$. Let $V_{\b}\leq V$ be the subspace spanned by 1-subspaces lying in $ \b $. Thus, $ \b\subseteq\{\l u\r\,|\,u\in V_\b\} $. Without loss of generality, assume that $\{e_1, \cdots, e_s\}$ is a basis of $V_{\b}$. Expand it to a basis of $ V $ as $ \eta=\{e_1, \cdots, e_d\} $. Under the basis $ \eta $, vectors in $ V_\b $ are expressed as $$V_{\b}=\{[u_1,u_2,\cdots,u_s,0,\cdots,0]\,|\,u_i \in \FF_q \}.$$

Let $P_{ij}\in\GL_d(q) $ be the permutation matrix that interchanges $ i $-th and $ j $-th columns of identity matrix $I_d$. For $ 1\leq \ell \leq s $, let $\mathcal{N}_{\ell}=\mathcal{N}_1^{P_{1 \ell}}$. Then $ \mathcal{N}_{\ell} $ consists of matrices $ N_\ell $ of form
$$N_{\ell}=\left[	
\begin{array}{ccccccc}
	1& & & n_1 &  &  & \\
	&\ddots &  & \vdots &  &  & \\
	& & 1 & n_{\ell-1} &  &  & \\
	& & & 1   &  &  & \\
	& & & n_{\ell+1} &1 &  & \\
	& & & \vdots &  & \ddots & \\
	& & & n_d &  &  & 1\\
\end{array}\right].$$
Since $ \mathcal{N}_1\lhd H_{\a} $ and $ \mathcal{N}_1 $ fixes all blocks in $ \calD(\a) $, it follows that $\mathcal{N}_\ell=\mathcal{N}_1^{P_{1\ell}}$ is normal in $ H_{\a}^{P_{1\ell}}=H_{\l e_\ell\r} $. 
Hence $\ov{\mathcal{N}}_\ell= \ov{\mathcal{N}}_1^{\ov P_{1\ell}}$ is also normal in $ G_{\a}^{\ov P_{1\ell}}=G_{\l e_\ell\r} $ and $ \mathcal{N}_\ell $ fixes all blocks in $ \calD(\l e_\ell\r) $.
In particular, each of $\mathcal{N}_1,\ldots,\mathcal{N}_s$ fixes the block $ \b $, and hence $\b\supseteq\l e_1\r ^{\mathcal{N}_2 \mathcal{N}_3 \cdots \mathcal{N}_s\mathcal{N}_1}$. Note that for any $u=[u_1,u_2,\cdots,u_d] \in V $ and $ N_{\ell}\in\mathcal{N}_\ell $, $$u^{N_\ell}=uN_\ell=[u_1,\ldots,u_{\ell-1},\,n_1u_1+n_2 u_2+ \cdots +n_d u_d,\,u_{\ell+1},\cdots,u_d], \text{with $n_{\ell}=1$}.$$ 
Thus, \[e_1^{\mathcal{N}_2 \mathcal{N}_3 \cdots \mathcal{N}_s\mathcal{N}_1}=\{[x_1,x_2,\cdots,x_s,0,\cdots,0]\,|\,x_i \in \FF_q \} = V_{\b},\] and so $ \b\supseteq\{\l u\r\,|\,u\in V_\b\} $. Therefore, $ \b=\{\l u\r\,|\,u\in V_\b\} $.\vs 

Recall $ \dim V_\b=s $. Now, $ \calB=\b^G $, and each block is a set of 1-subspaces contained in some $ s $-subspace of $ V $. By Lemma \ref{c}, $ \calD $ is quasi-symmetric. It follows that any two $ s $-subspaces of $ V $ with non-trivial intersection intersect in a fixed dimension. Hence $s=2$ or $d-1$. For $s=d-1$, $\calD=\PG(d-1,q)$ is symmetric. Thus, $s=2$, $d>3$, and $\calD=\PG_1(d-1,q)$. By Example \ref{ex:Pro-G}, $ \calD $ is $ G $-locally $ 2 $-transitive for each $ G $ with $ \PSL_d(q)\lhd G\leqslant \PGammaL_d(q) $. Part~(1) is satisfied.\vs\vs

\noindent\textbf{(b)} $\mathcal{N}_1$ is transitive on $\calD(\a)$.

In this case, $G_{\a}$ is 2-homogeneous on $ \calD(\a) $, and meanwhile, $G_{\a}$ has an abelian minimal normal subgroup $\mathcal{N}_1\cong q^{d-1}$ that is transitive on $\calD(\a)$. Thus, $\mathcal{N}_1$ is regular on $\calD(\a)$, and $r=q^{d-1}$. We first prove that $ d\leq 3 $.\vs

Let $ q=p^f $ with $ p $ prime. Since $v-1=q \cdot \frac{q^{d-1}-1}{q-1}$, $r=q^{d-1}$ and $\lambda(v-1)=r(k-1)$, we have $\frac{q^{d-1}-1}{q-1}\div k-1$. The design $ \calD $ is non-symmetric, so $ k<r $, and thus $$k=h\cdot\frac{q^{d-1}-1}{q-1}+1, \mbox{\ where\ } 1\leq h<q-1. $$ Note that $k \equiv h+1\,(\mod q)$, so $q \nmid k$. Suppose $p^i\,||\, k$, where $0 \leqslant i < f$. Then $ \gcd(k,r)=\gcd(k,q^{d-1})=p^i $. Since $k\div vr$, it follows that $ k\div p^iv $, and so $$ k\ \Big|\  (p^iqk-p^ihv)=p^i\, (q\,(h\cdot\frac{q^{d-1}-1}{q-1}+1)-h\,(q\cdot\frac{q^{d-1}-1}{q-1}+1)   )=p^i(q-h).$$
Hence we have $$ hq^{d-2}<k<p^i q<q^2. $$ It follows that either $ d=2 $\,; or $ d=3 $, $ h<p^i $. Moreover, in the latter case, since $k \equiv h+1\,(\mod q)$ and $p^i\,||\, k$, we have $ p^i\div h+1 $, so $ h=p^i-1 $.\vs\vs

\noindent\textbf{(b.1)} We first treat the case $d=3$, and show that there is only a unique design on $ 21 $ points in this case. Now, $ h=p^i-1 $ and $ k=h(q+1)+1 $. Let $j=f-i>0$.\vs

If $i=j$, then $p^i=\sqrt{q}$, and $k=h(q+1)+1=(\sqrt{q}-1)(q+1)+1.$ Thus, $$\lambda=\frac{r(k-1)}{v-1}=\frac{q^2(\sqrt{q}-1)(q+1)}{q(q+1)}=q(\sqrt{q}-1).$$
By Lemma \ref{c}, the following number $ c $ is an integer\,: $$c=\frac{(k-1)(\lambda-1)}{r-1}+1=\frac{(\sqrt{q}-1)(q+1)(q\sqrt{q}-q-1)}{(q+1)(\sqrt{q}+1)(\sqrt{q}-1)}+1=\frac{q\sqrt{q}-q-1}{\sqrt{q}+1}+1.$$
It follows that $ \sqrt{q}+1\div 3 $, $q=4$ and $T=\PSL_3(4)$. The parameters are $(v,b,r,k,\lambda)=(21,56,16,6,4)$. By $ \MA $, for $ G=T.\calO $, where $\calO=1$ or $2$, there exist subgroups $ X,Y\leq G $, such that \[\mbox{$ X\cong\ZZ_{2}^4{:}\A_5.\calO $, $Y\cong\A_6.\calO $ and $ X\cap Y\cong \A_5.\calO $}.\] The triple $ (G,X,Y) $ satisfies the conditions of Construction \ref{cons}, thereby giving rise to a $ G $-locally $ 2 $-transitive design with above parameters. Part~(2) is satisfied.\vs

If $i \ne j$, then $v=q^2+q+1=p^{2i+2j}+p^{i+j}+1$,
and meanwhile, $$k=h(q+1)+1=p^i q -q +p^i=p^i(p^{i+j}-p^j+1).$$
This case cannot occur, since we have $k\div p^i v$, which leads to a contradiction\,:
\begin{align*}
\frac{k}{p^i}&=\gcd(\frac{k}{p^i},v)=\gcd(p^{i+j}-p^j+1,p^{2i+2j}+p^{i+j}+1)\\
&=\gcd(p^{i+j}-p^j+1,p^{2i+2j}+p^{j})=\gcd(p^{i+j}-p^j+1,p^{2i+j}+1)\\
&=\gcd(p^{i+j}-p^j+1,p^{i+j}-p^{i}+1)=\gcd(p^{i+j}-p^j+1,p^{j}-p^{i})<\frac{k}{p^i}.
\end{align*} 

\noindent\textbf{(b.2)} At last, we treat the case $d=2$ (so $ q\geq 4 $), and show that in this case $ \calD=\AG_2(3,2) $. Now, we have the following conditions\,: \[\mbox{$ v=q+1$,\, $ r=q=p^f $,\, $\lambda=k-1$,\, $ 3\leq k<q $, and $ k\div vr=(q+1)q $.}\]
Moreover, by Lemma \ref{c}, the number  $c=\frac{(k-1)(\lambda-1)}{r-1}+1=\frac{(k-1)(k-2)}{q-1}+1$ is an integer.\vs

Recall that $ K=\SL_2(q) $ acts unfaithfully on $ \calP $, which induces $K^\calP=T\leq\Aut(\calD)$. By Lemma~\ref{socflagtrans}, $T$ is flag-transitive on $\calD$, so is $ K $. Hence $K_{\a}\cong q{:}{(q-1)}$ is transitive on the set $
\calD(\a)$ of $ q $ blocks, where the stabilizer is $K_{\a\b}=\ZZ_{q-1}$. Note that there exists a maximal subgroup $H\leq K$ such that
$$K_{\a\b} < K_{\b} \leqslant H < K.$$ Further, $ |H| $ is divisible by $ |K_\b|=k|K_{\a\b}|=k(q-1) $.
We then analyze the maximal subgroups of $ K=\SL_2(q) $, which were determined in \cite[Table~8.1]{low-dimensional}, case by case.\vs

\noindent\textbf{(b.2.1)} $H \in \CC_1$. Then $H\cong q{:}(q-1)$ has $ K_{\a\b}=\ZZ_{q-1} $ as a maximal subgroup, so $K_{\b}=H$ and $\calD$ is symmetric, contrary to Hypothesis~$\ref{hypo-2}$.\vs

\noindent\textbf{(b.2.2)} $H \in \CC_2$. Then $|H|=2(q-1)$ is divisible by $k(q-1)$, not possible as $ k\geq 3 $.\vs

\noindent\textbf{(b.2.3)} $H \in \CC_3$. Then $|H|=2(q+1)$ is divisible by $k(q-1)$. Since $ 3\leq k<q $, it follows that $ k=3 $, $ q=5 $, and then $c=\frac{(3-1)(2-1)}{5-1}+1=\frac{3}{2}$, a contradiction.\vs

\noindent\textbf{(b.2.4)} $H \in \CC_5$. There are two subcases to treat.\vskip1pt

(i) $H=\SL_2(q_0)$, where $q=q_0^a$. Since $|H|=q_0(q_0^2-1)$ is divisible by $k(q_0^a-1)$, we have $a=2$ and $k\div q_0$. Thus, $ 0<c-1\leq\frac{(q_0-1)(q_0-2)}{q_0^2-1}<1 $, a contradiction.\vs

(ii) $H=\SL_2(q_0).2$, where $q=q_0^2$ is odd. Since $ |H|=2q_0(q_0^2-1) $ is divisible by $k(q_0^2-1)$, we have $k\div 2q_0$. If $k \leqslant q_0$, similar to the former case, $c$ is not an integer. Hence $k=2q_0$, and then
$$c-1=\frac{(2q_0-1)(2q_0-2)}{q_0^2-1}=\frac{4q_0-2}{q_0+1}=4-\frac{6}{q_0+1}.$$ It follows that $q_0=5$, and so $r=25$, $k=10$, $v=26$ and $b=vr/k=65$. Now, $T=\PSL_2(25)$ has a subgroup $T_{\b}$ of index $65$. By \cite{ATLAS}, we have $(G,G_{\a},G_{\b})=(\PSL_2(25),5^2{:}12,\S_5)$ or $(\PSigmaL_2(25),5^2{:}(4\times \S_3),\S_5 \times 2)$. Hence $G_{\a\b}=\ZZ_{12}$ or $\ZZ_{4}\times \S_3$, respectively. By $ \MA $, $\ZZ_{12}$ is not a subgroup of $\S_5$, and $\ZZ_{4}\times \S_3$ is not a subgroup of $\S_5\times \ZZ_{2}$. Hence this case cannot occur.\vs


\noindent\textbf{(b.2.5)} $H \in \CC_6$. There are two subcases to treat.\vskip1pt

(i) $H=2_-^{1+2}.\S_3$, where $q=p\equiv\pm 1$ (\mod $8$). In this case, we conclude that $ (q,k)=(7,4) $ is the unique pair satisfying that $3\leq k<q$, $k(q-1)\div 48$, and $q-1 \div(k-1)(k-2)$. Thus, $G=\PSL_2(7)$ or $\PGL_2(7)$.

If $G=\PGL_2(7)$, then $G_{\a}\cong \ZZ_7{:}\ZZ_6$, $G_{\a\b}\cong \ZZ_6$, and $|G_{\b}|=24$. By $ \MA $, any subgroup of $G$ of order $24$ is isomorphic to $\S_4$, so it has no subgroups isomorphic to $\ZZ_6$. Hence this case cannot occur. 

If $G=\PSL_2(7)$, then $G_{\a}\cong \ZZ_7{:}\ZZ_3$, $G_{\a\b}\cong \ZZ_3$, and $|G_{\b}|=12$. Any subgroup of $G$ of order $ 12 $ is isomorphic to $\A_4$ and intersects $ G_\a $ in $ \ZZ_3 $. Thus, the triple $(G,G_\a,G_{\b})=(\PSL_2(7),\ZZ_{7}{:}\ZZ_{3},\A_4)$ by Construction \ref{cons} induces a $ G $-locally 2-homogeneous design $ \calD $ with $(v,b,r,k,\lambda)=(8,14,7,4,3)$, which, by $ \MA $, is isomorphic to $ \AG_2(3,2) $. Part (3) is satisfied. In addition, note that $ G_\a $ is not $2$-transitive on $[G_{\a}{:}G_{\a \b}]$, so $ G $ is not locally 2-transitive on $ \calD $. \vs

(ii) $H=2_-^{1+2}{:}3$, where $q=p\equiv \pm 3,5,\pm 11, \pm 13, \pm 19$ (\mod $40$). In this case, we conclude that $ (q,k)=(5,3) $ is the unique pair satisfying that $3\leq k<q$, $k(q-1)\div 24$, and $q-1 \div(k-1)(k-2)$. Thus, $G=\PSL_2(5)$ or $\PGL_2(5)$.

If $G=\PGL_2(5)$, then $G_{\a}\cong \ZZ_5{:}\ZZ_4$, $G_{\a\b}\cong \ZZ_4$, and $|G_{\b}|=12$. By $ \MA $, any subgroup of $G$ of order $12$ is isomorphic to $\A_4$ or $\D_{12}$, so it has no subgroups isomorphic to $ \ZZ_4 $. Hence this case cannot occur. 

If $G=\PSL_2(5)$, then $G_{\a}\cong \ZZ_5{:}\ZZ_2$ is not $2$-homogeneous on $[G_{\a}{:}G_{\a \b}]$. Hence this case cannot occur, either.\vs

\noindent\textbf{(b.2.6)} $H \in \mathcal{S}$. There are two subcases to treat.\vskip1pt

(i) $H=2.\A_5$, where $q=p\equiv\pm 1$ (\mod $10$). In this case, we conclude that $ (q,k)=(11,6) $ is the unique pair satisfying that $3\leq k<q$, $k(q-1)\div 120$, and $q-1 \div(k-1)(k-2)$. Thus, $G=\PSL_2(11)$ or $\PGL_2(11)$.

If $G=\PGL_2(11)$, then $G_{\a}\cong \ZZ_{11}{:}\ZZ_{10}$, $G_{\a\b}\cong \ZZ_{10}$, and $|G_{\b}|=60$. By $ \MA $, any  subgroup of $G$ of order $60$  is isomorphic to $\A_5$, so it has no subgroups isomorphic to $\ZZ_{10}$. Hence this case cannot occur.

If $G=\PSL_2(11)$, then $G_{\a}\cong \ZZ_{11}{:}\ZZ_{5}$, $G_{\a\b}\cong \ZZ_{5}$, and $|G_{\b}|=30$. However, $G$ has no subgroups with order $30$. Hence this case cannot occur, either.\vs

(ii) $H=2.\A_5$, where $q=p^2$, $p\equiv\pm 3$ (\mod $10$). In this case, there exists no pair $(q,k)$ satisfying that $3\leq k<q$, $k(q-1)\div 120$, and $q-1 \div(k-1)(k-2)$.\vs

In conclusion, in the case $d=2$, we have that $ \calD=\AG_2(3,2) $.
\end{proof}

\begin{lemma} \label{psu}
Under {\rm Hypothesis~\ref{hypo-2}}, if $T=\PSU_3(q)$, then $\calD$ is the Hermitian Unitary $U_H(q)$, and $ G $ satisfies that $\PSU_3(q) \lhd G\leqslant \PGammaU_3(q)$ and $ \PGU_3(q)\leq G\l\phi^2\r $, where $\phi$ is the field automorphism. Further, $ G $ is locally $ 2 $-transitive on $ \calD $.
\end{lemma}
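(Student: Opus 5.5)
We follow the pattern of Lemma~\ref{psl}. Since $T=\PSU_3(q)$ acts 2-transitively on the $v=q^3+1$ isotropic points, $\calP$ is the set of isotropic $1$-subspaces of $V=\FF_{q^2}^3$ and $G_\a$ is a Borel subgroup. With the notation of Example~\ref{ex:psu3q}, $G_\a$ has shape $(q^{1+2}).(\text{torus}).\calO$, where $Q=q^{1+2}$. Its normal subgroups $Q'\cong\E_q$ and $Q$ are normal in $G_\a$, so by local primitivity each of them acts on $\calD(\a)$ either trivially or transitively. By Lemma~\ref{socflagtrans}, $T$ is flag-transitive, so we may work in $T$ (or in $\SU_3(q)$) and then extend to $G$.

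\textbf{Step 1: the case $Q'$ transitive on $\calD(\a)$.} Here $Q'$ is regular on $\calD(\a)$ and $r=q$. We then use $\lambda(v-1)=r(k-1)$ with $v-1=q^3$, which gives $q^2\mid k-1$. This contradicts $k\le r=q$ (recall $k\ge3$).

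\textbf{Step 2: the case $Q'$ trivial but $Q$ transitive on $\calD(\a)$.} Now $Q/Q'\cong\E_{q^2}$ is transitive, hence regular, and $r=q^2$. The parameter equation becomes $\lambda q=k-1$, so $k=\lambda q+1$, and $k<r$ forces $1\le\lambda<q$. Next we show that $Q'$ fixes every block through $\a$. Conjugating by $T$, the root group $Q'_\g$ attached to each point $\g\in\b$ fixes $\b$, so $\b$ is a union of orbits of the group $\langle Q'_\g:\g\in\b\rangle$. Two distinct root groups already generate $\SL_2(q)$ acting on the $q+1$ isotropic points of the nondegenerate line $\l\a,\g\r$. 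Consequently $\b$ is a union of full Hermitian lines (Baer sublines) through its points. If $\b$ contained three points not on a common line, the generated group would be transitive on $\calP$, contradicting $k<v$. Hence $k=q+1$ and $\lambda=1$, and $\calD=U_H(q)$. We would double-check this against the requirement of Lemma~\ref{c} that $c$ be an integer, and use it to exclude $\lambda>1$ when $q$ is small.

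\textbf{Step 3: the case $Q$ trivial on $\calD(\a)$.} Then $G_\a^{\calD(\a)}$ is a 2-homogeneous quotient of the metacyclic group $(q^2-1)/\gcd(3,q+1).\calO$, so it has affine type $\ZZ_r{:}\ZZ_{r-1}$ or $\ZZ_r{:}\ZZ_{(r-1)/2}$. This gives $r\le 2\log_p q^2+1$. But $r^2>rk>\lambda v\ge q^3+1$, which leaves only a handful of small $q$. Each of these is eliminated either by $\lambda(v-1)=r(k-1)$ or by a direct \MA{} check of subgroups of index $b$, following the procedure after Lemma~\ref{lem:cons}.

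\textbf{Step 4: which $G$ occur.} Example~\ref{ex:psu3q} shows that $\PGU_3(q)$ is locally 2-transitive on $U_H(q)$. For general $G$ we must decide when $G_\a$ is 2-homogeneous on the $q^2$ blocks through $\a$, that is, when the image of $G_\a$ in $\AGammaL_1(q^2)$ acting on $Q/Q'$ is 2-homogeneous. The diagonal part contributed by $T$ alone has index $\gcd(3,q+1)$ in $\GL_1(q^2)$, so the missing scalars must be supplied by diagonal automorphisms or by field automorphisms of appropriate order. Computing this image yields the condition $\PGU_3(q)\le G\l\phi^2\r$. By Wagner–Kantor, the 2-homogeneous case then forces 2-transitivity here, and $G_\b^{\calD(\b)}\trianglerighteq\PSL_2(q)$ is 2-transitive, so $G$ is locally 2-transitive. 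We expect Step 2 (proving that blocks are Hermitian lines) and this final group-theoretic bookkeeping to be the main obstacles.
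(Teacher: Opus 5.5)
Your outline is sound, and Steps 1, 3 and 4 follow the paper fairly closely. Step 2, however, takes a genuinely different route.

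\textbf{How the paper argues.} After excluding $Q'$ transitive, it shows $Q$ is nontrivial on $\calD(\a)$ using flag-transitivity of $T$: otherwise $r$ divides $|T_\a/Q|=(q^2-1)/(3,q+1)$, so $(r,p)=1$, and $\lambda q^3=r(k-1)$ forces $r\mid\lambda$. It then uses irreducibility of the torus on $Q/Q'\cong\FF_p^{2f}$ to get that $Q/Q'$ acts faithfully, hence $r=q^2$. The integrality of $c$ (Lemma~\ref{c}) gives $q^2-1\mid\lambda(\lambda-1)$ with $\lambda<q$, so $\lambda=1$. Finally it cites the classification of flag-transitive linear spaces to identify $U_H(q)$.

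\textbf{How your route differs.} You imitate case (a) of Lemma~\ref{psl}. Each $Q'_\g$ fixes every block through $\g$. In fact a single root group $Q'_\a$ already acts regularly on the $q$ points other than $\a$ of each Hermitian line through $\a$, so you do not need the $\SL_2(q)$ generation. Hence every block is closed under Hermitian lines. This avoids the external classification. It also never uses $r=q^2$, which matters because your ``transitive, hence regular, so $r=q^2$'' only yields $r\mid q^2$ without the faithfulness argument. The same reasoning makes Step 3 redundant: if $Q$ were trivial on $\calD(\a)$, so is $Q'$, and you again get $U_H(q)$. But there $Q$ is visibly transitive on the $q^2$ lines through $\a$, a contradiction. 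If you keep Step 3 anyway, it closes without \MA{}: $r\le 4f+1$ and $r^2>q^3+1$ leave only $q=4$, and then $r$ would be a prime $\le 9$ with $r^2>65$, which is impossible.

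\textbf{Two assertions still need proof.}

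(i) The triangle step. Your generation claim is true but needs knowledge of subgroups of $\SU_3(q)$. Counting is easier. If $x\notin\b$, each of the $q^2$ Hermitian lines through $x$ meets the closed set $\b$ in at most one point, so $k\le q^2$. A closed set containing a triangle $\a_1,\a_2,\a_3$ contains the $q+1$ lines joining $\a_3$ to the points of the line through $\a_1,\a_2$, so it has at least $q^2+q+1$ points. Hence $\b=\calP$, contradicting $k<v$.

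(ii) Step 4 is only asserted, and your phrase about field automorphisms ``supplying missing scalars'' is misleading. Suppose $3\mid q+1$. Then $T_\a^{\calD(\a)}=q^2{:}\ZZ_{(q^2-1)/3}$, and its point stabilizer has three orbits $O_1,O_2,O_3$ on $\FF_{q^2}^\times$, namely the cosets of the cubes. The diagonal automorphism $\d$ permutes them cyclically. Since $p\equiv -1\pmod 3$, $\phi$ swaps two and fixes one, and $\phi^2$ fixes all three. So $G_\a$ is $2$-transitive on $\calD(\a)$ if and only if $G/T$ induces a $3$-cycle on $\{O_1,O_2,O_3\}$. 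That happens if and only if $G/T$ meets $\d^{\pm1}\l\phi^2\r$, i.e.\ $\PGU_3(q)\le G\l\phi^2\r$. No pure field automorphism ever induces such a $3$-cycle.
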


\begin{proof}
First, we have $ G\leq T.\Out(T)=\PGammaU_3(q) $. Let $ q=p^f $, where $ p $ is prime. Respectively, for the case $ \gcd(q+1,3)=1 $ or $ \gcd(q+1,3)=3 $ (thus, $ p\equiv -1\,(\mod 3) $ and $ f $ is odd), the outer automorphism group of $ T $ is
\begin{align*}
\Out(T)&=\l \d,\g,\phi\ |\ \d^{\gcd(q+1,3)}=\gamma^2=1,\,\phi^f=\gamma,\,\d^\g=\d^{-1},\,\d^\phi=\d^p\r\\
&\cong\ZZ_{2f} \mbox{\ or\ }\ZZ_{f}\times(\ZZ_3{:}\ZZ_2),
\end{align*} where $ \d $, $ \g $, $ \phi $ is the diagonal, the graph and the field automorphism.\vs

The group $G$ has a unique $2$-transitive action of degree $ q^3+1 $. It follows that, letting $V=\FF_{q^2}^3$ and $ (V,B) $ be a non-degenerate unitary space, the set $\calP$ is regarded as the set of all $q^3+1$ isotropic $1$-subspaces of $ V $. Note that $ T $ is transitive on $ \calP $.\vs

For $\a\in\calP$, the stabilizer $ \GU_3(q)_\a $ is given in Example~\ref{ex:psu3q}. Projectively, we have \[\PGU_3(q)_{\a}=Q{:}L, \mbox{\ where\ } Q=Q'.(Q/Q')=q.q^2,\, L=\ZZ_{q^2-1}.\] The subgroup $ Q $, known as the unipotent radical, is contained in $ T $ and is normalized by $ \phi $. Hence we have $$T_{\a}=(q.q^2){:}\ZZ_{\frac{q^2-1}{(3,q+1)}}, \mbox{\ and\ } \PGammaU_3(q)_{\a}=(q.q^2){:}(\ZZ_{q^2-1}{:}\ZZ_{2f}).$$ In particular, $ Q$, $Q'\lhd G_\a $. If $ Q' $ is transitive on $ \calD(\a) $, then $ r\div q $, and by $\lambda(v-1)=r(k-1)$, we have a contradiction\,: $$k=\frac{\lambda(v-1)}{r}+1=\frac{\lambda q^3}{r}+1 \geqslant \lambda q^2+1 >r.$$ Thus, $ Q' $ acts trivially on $ \calD(\a) $. Further, if $ Q $ acts trivially on $ \calD(\a) $, since $ T $ is flag-transitive on $ \calD $ (Lemma \ref{socflagtrans}), then $ r\div(q^2-1) $, $ (r,p)=1 $, and $r\div \lambda$ by $\lambda(v-1)=\lambda q^3=r(k-1)$, not possible. Thus, $ Q/Q'=q^2 $ is non-trivial on $ \calD(\a) $.\vs

View $ q^2 $ as the vector space $ \FF_p^{2f} $. Note that $ \ZZ_{\frac{q^2-1}{(3,q+1)}}=\ZZ_{q^2-1} $ or $ \ZZ_{\frac{q^2-1}{3}} $, which, respectively, has $ 1 $ or $ 3 $ orbits on the set of non-zero vectors of the space. In the former case, $ \ZZ_{q^2-1} $ is irreducible on $ \FF_p^{2f} $. 
Since $T$ is almost simple, we have $q>2$. In the latter case, assume that $X$ is a subspace fixed by $ \ZZ_{\frac{q^2-1}{3}} $. If $X$ has exactly $\frac{p^{2f}-1}{3}+1$ points, then $ p=2 $, $ f=1 $ and $ q=2 $, not in our case. Hence $X$ contains at least $2$ orbits of length $\frac{p^{2f}-1}{3}$.
However, $|X| \geqslant \frac{2(p^{2f}-1)}{3}+1>p^{2f-1}$.
Hence $\ZZ_{\frac{q^2-1}{3}}$ is irreducible on $\FF_p^{2f}$, too. Thus, $ Q/Q'= q^2 $ is a minimal normal subgroup of $ q^2{:}\ZZ_{\frac{q^2-1}{3}} $, and it follows that $ Q/Q' $ is an abelian normal subgroup of $ G_\a^{\calD(\a)} $, so it is regular on $ \calD(\a) $ and $ r=q^2 $.\vs

Now, $ r=q^2 $, $ k=\frac{\lambda(v-1)}{r}+1=\lambda q+1<r $, so $ \lambda<q $. By Lemma \ref{c}, $ (r-1)$ divides $(k-1)(\lambda-1) $, so \[q^2-1\ \Big|\ \lambda q(\lambda-1), \mbox{\ and then,\ } q^2-1\ \Big|\ \lambda(\lambda-1).\] Since $ \lambda(\lambda-1)<q(q-1)<q^2-1 $, we must have $ \lambda=1 $. The parameters are $$(v,b,r,k,\lambda)=(q^3+1,q^2(q^2-q+1),q^2,q+1,1).$$ According to \cite[Section 2]{Buekenhout2vk1}, a design with the above parameters can only be the Hermitian Unitary $U_H(q)$.\vs\vs

We then determine which group $ G $ lying between $\PSU_3(q)$ and $\PGammaU_3(q)$ is locally 2-homogeneous on $ \calD $. By Example \ref{ex:psu3q}, if $\PGU_3(q) \leq G$, then $G$ is locally $2$-transitive on $\calD$. We thus consider the case $\PGU_3(q)\nleq G$. This only happens when $\gcd(3,q+1)=3$ (thus, $ p\equiv -1\,(\mod 3) $ and $ f $ is odd), and $\PSU_3(q)<\PGU_3(q)$. 

The stabilizer of block is determined in Example \ref{ex:psu3q}. It follows that $$\GU_3(q)_{\b}=\GU(\b)\times\GU(\b^\bot)\cong\GU_2(q) \times \GU_1(q),$$ and then $\SU_3(q)_{\b}$ contains a subgroup $ \SU(\b) $ which induces the following group $$ \PSU(\b)=\PSU_2(q)\cong\PSL_2(q), $$ that acts 2-transitively on the set $ \calD(\b) $ of $ q+1 $ points. Since $ \SU_3(q) ^\calP=\PSU_3(q)\leq G $, we have that $ G_\b $ is always 2-transitive on $ \calD(\b) $.\vs

On the other hand, consider the local action of points. Since $ r=q^2 \not\equiv 3\,(\mod 4) $, the action is 2-homogeneous if and only if it is 2-transitive. It is shown above that $$ T_\a^{\calD(\a)}=T_\a/Q'= q^2{:}\ZZ_{\frac{q^2-1}{3}}, $$ where $ Q/Q'= q^2 $ is regular on $ \calD(\a) $, and the stabilizer $ (T_{\a}^{\calD(\a)})_\b=\ZZ_{\frac{q^2-1}{3}} $. View $ \calD(\a) $ as $ \FF_q^2 $. Let $ \b=\0 $ and $\FF_{q^2}^\times=\langle \omega \rangle \cong \ZZ_{q^2-1}$. Then $ (T_{\a}^{\calD(\a)})_\0$ has $3$ orbits on $ \FF_{q^2}^\times $ as
$$O_1=\{\omega^{1+3i}\in\FF_{q^2}^\times\},\, O_2=\{\omega^{2+3i}\in\FF_{q^2}^\times\},\, O_3=\{\omega^{3i}\in\FF_{q^2}^\times\}.$$
In particular, $ T_\a $ is not 2-transitive on $ \calD(\a) $. Now, let $ X=G/T\leq\Out(T) $. Then $ G_\a=T_\a.X $, and so $$G_\a^{\calD(\a)}=T_\a^{\calD(\a)}.X=q^2{:}(\ZZ_{\frac{q^2-1}{3}}.X).$$ Hence $ G_\a $ is $ 2 $-transitive on $ \calD(\a) $ if and only if $ X $ mixes the 3 orbits $ O_1,O_2,O_3 $ together. Recall that 
\begin{align*}
\Out(T)&=\l\phi^2\r\times\l\d,\phi^f\r\cong\ZZ_{f}\times(\ZZ_3{:}\ZZ_2),
\end{align*}
where $ \l\d\r=\l w\r/\l w^3\r $, $ \phi:\FF_{q^2}^\times\rightarrow\FF_{q^2}^\times$, $t\mapsto t^p $, and $ p\equiv -1\,(\mod 3)$, so\vs

$\delta:O_1 \rightarrow O_2  \rightarrow O_3 \rightarrow O_1$,\vskip1pt

$\phi:O_1 \rightarrow O_2  \rightarrow O_1,  O_3 \rightarrow O_3,$ and $\phi^2$ fixes all $O_1,O_2,O_3$.\vs

\noindent Now, let $ \tau:\Out(T) \rightarrow\Out(T)/\l\phi^2\r=\l\d,\phi^f\r $ be the projection mapping, where the image $  \l\d,\phi^f\r\cong \D_6$. Since $ X $ mixes $ O_1,O_2,O_3 $, it follows that $ X^\tau $ contains $ \l\d\r $. Hence, we have \[\mbox{$ \l\d\r\leq X\l\phi^2\r $, and $ \PGU_3(q)\leq G\l\phi^2\r $.}\]

In conclusion, we have that either $\PGU_3(q) \leq G\leqslant \PGammaU_3(q)$\,; or $\PGU_3(q)\nleq G $ and $ \PGU_3(q)\leq G\l\phi^2\r $. Further, in both cases, $ G $ is locally 2-transitive on $ \calD $. In particular, these two cases can be unified as \[\PGU_3(q)\leq G\l\phi^2\r.\]

At last, we remark that, only in the case $3\div f$ there exist groups $ G $ satisfying \[\mbox{$\PGU_3(q)\nleq G$ and $ \PGU_3(q)\leq G\l\phi^2\r $.}\] Note that $\PGU_3(q)\leq G\l\phi^2\r$ implies $ \l\d\r\leq X\l\phi^2\r $, and then $ X $ contains such an element $ x=\phi^{2i}\d $. If $ 3\nmid f $, the order $ o(\phi^{2i})=\frac{f}{(f,i)} $ is not divisible by $ 3 $. Thus, $ x^{o(\phi^{2i})}=\d $ or $ \d^{-1}$ is contained in $X$, and so $\PGU_3(q)\leq G$. Meanwhile, in the case $3\div f$, we may take $ \phi^{2i} $ of order divisible by $ 3 $, and take $ X $ to be the group of diagonal type $ X=\l\phi^{2i}\delta\r $. Now, $ \delta\notin X $ and $ \delta\in X\l\phi^2\r $. The condition is satisfied.
\end{proof}

\begin{lemma} \label{others}
Under {\rm Hypothesis~\ref{hypo-2}}, if $T\ne\PSL_d(q),$ $\PSU_3(q)$, then one of the cases in {\rm Row} $ 2 $, $ 3 $, $ 5 $ of {\rm Table \ref{table_as}} occurs. Further, $ G $ is locally $ 2 $-transitive on $ \calD $.
\end{lemma}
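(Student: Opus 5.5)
By Hypothesis~\ref{hypo-2} and the classification of almost simple $2$-transitive groups \cite[Thm.\,5.3]{as2trans}, once $T=\PSL_d(q)$ and $T=\PSU_3(q)$ are excluded, the pair $(T,v)$ lies in a short list. I would split it into two parts.
\begin{itemize}
\item[(i)] The rank-one Lie type families: $T=\Sz(q)$ with $v=q^2+1$; $T={}^2\G_2(q)$ with $v=q^3+1$; and $T=\Sp_{2m}(2)$ with $v=2^{2m-1}\pm2^{m-1}$.
\item[(ii)] The sporadic and small cases: $\A_v$ $(v\ge5)$, $\A_7$ $(v=15)$, $\M_{11}$ $(v=11,12)$, $\M_{12}$ $(v=12)$, $\M_{22}$ $(v=22)$, $\M_{23}$ $(v=23)$, $\M_{24}$ $(v=24)$, $\HS$ $(v=176)$, $\Co_3$ $(v=276)$, and $\PSL_2(8)$ with $v=28$.
\end{itemize}
Throughout I use the restrictions already available: $\lambda(v-1)=r(k-1)$; $3\le k\le v/2$ with $k<r$ (Lemma~\ref{k<=v/2}); $r^2>rk>\lambda v\ge v$; the integrality of $c=\frac{(k-1)(\lambda-1)}{r-1}+1$ (Lemma~\ref{c}); and $T$ being flag-transitive (Lemma~\ref{socflagtrans}). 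The key constraint is that $G_\a^{\calD(\a)}$ must be a $2$-homogeneous group of degree $r$, which is also a primitive quotient action of the known point stabilizer $G_\a$.

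For the alternating groups with natural action, $G_\a\ge\A_{v-1}$ has no faithful primitive action of degree $r<v-1$ other than small exceptions. Hence $r=v-1$, which forces $k=\lambda+1$, and $G_\b$ must have an orbit of length $k$. But $\A_v$ is $k$-homogeneous, so $\calB=\calP^{\{k\}}$ and $\calD$ is trivial. The few small exceptional degrees, such as $\A_5\cong\PSL_2(4)$ acting on $6$ points, are checked by hand.

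For the Lie type families in (i), the point stabilizer is a Borel subgroup $Q{:}\ZZ_{q-1}$ (or a corresponding parabolic). I would argue exactly as in Lemma~\ref{psu}. The derived or central part $Q'$ must act trivially on $\calD(\a)$, since otherwise $r\mid q$ and $k>r$. If all of $Q$ acts trivially, then $r$ is coprime to $p$ and $r\mid\lambda$, which is impossible. So an elementary abelian section of $Q$ is regular on $\calD(\a)$, giving $r=|Q/Q'|$, that is, $q$ for $\Sz$ and ${}^2\G_2$. Then $k-1=\lambda(v-1)/r$ is already at least about $r$, which contradicts $k<r$. This kills $\Sz(q)$ and ${}^2\G_2(q)$. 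For $\Sp_{2m}(2)$, the stabilizer is $\GO^\pm_{2m}(2)$, whose $2$-homogeneous actions have large degree. I would solve the arithmetic $\lambda(v-1)=r(k-1)$ with $c$ integral and show that no admissible $k$ exists, except in small $m$, which I check directly.

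For the sporadic list (ii), I would list the $2$-homogeneous actions of each $G_\a$ from the ATLAS \cite{ATLAS}, giving the candidate values of $r$, and then solve for $(k,\lambda,b)$ with the divisibility and $c$-integrality conditions. The cases expected to survive are:
\begin{itemize}
\item $(12,22,11,6,5)$ for $\M_{11}$, with $G_\a=\PSL_2(11)$ and $r=11$;
\item $(15,35,7,3,1)$ for $\A_7$, with $G_\a=\PSL_3(2)$ acting on $7$ points;
\item $(22,77,21,6,5)$ for $\M_{22}.\calO$, with $G_\a=\PSL_3(4).\calO$ and $r=21$.
\end{itemize}
For each surviving parameter set, I would run the \MA{} procedure of Steps 1--5 after Lemma~\ref{lem:cons}: find subgroups of index $b$, find orbits of length $k$, check $|H:H\cap K|=r$, and check $2$-homogeneity of both coset actions. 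This realizes the three rows of Table~\ref{table_as} and shows they are locally $2$-transitive. Uniqueness would come from the facts that every $G$ arises via Construction~\ref{cons} and that the computation finds a single conjugacy class of suitable $K$. For $\M_{11}$ of degree $11$, $\M_{12}$, $\M_{23}$, $\M_{24}$, $\HS$, $\Co_3$ and $\PSL_2(8)$ on $28$ points, the same computation (or pure arithmetic) should give no solution. For example, for $\Co_3$ we have $G_\a=\McL{:}2$, whose only $2$-transitive degree is $275=v-1$, so $k=\lambda+1$ and the block stabilizer orbit test fails.

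I expect the main obstacle to be the $\Sp_{2m}(2)$ family. There the degrees are not tiny, the stabilizer $\GO^\pm_{2m}(2)$ does have a $2$-transitive action for small $m$, and the arithmetic has to be pushed uniformly in $m$ before the residual small cases are handed to \MA.
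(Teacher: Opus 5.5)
Your overall plan is the paper's: run through the almost simple $2$-transitive groups, use $2$-homogeneity of $G_\a^{\calD(\a)}$ to pin down $r$, apply $\lambda(v-1)=r(k-1)$, $k\mid vr$ and integrality of $c$, and hand the survivors to \MA{}. The gap is the $\Sp_{2m}(2)$ family, which you yourself flag as the main obstacle. You leave it open, and the route you propose rests on a false premise. You say $\GO^\pm_{2m}(2)$ has $2$-homogeneous actions ``of large degree'' and propose to push the arithmetic uniformly in $m$. Since you never identify $r$, there is nothing to push. The missing idea is that no such action exists.

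Here $G=T$ (as $\Out(T)=1$) and $G_\a=\GO^\pm_{2m}(2)$. Since $r\ge 4$, the simple index-$2$ subgroup $\Omega^\pm_{2m}(2)$ cannot act trivially on $\calD(\a)$. Hence $G_\a^{\calD(\a)}$ is nonsolvable, so it is $2$-transitive by the Wagner--Kantor Lemma, with socle $\Omega^\pm_{2m}(2)$. The classification \cite[Thm.\,5.3]{as2trans}, which you already apply to $G$, shows that no $2$-transitive group has such a socle when $m\ge 4$. The same holds for $m=3$ with sign $-$, since $\Omega^-_6(2)\cong\PSU_4(2)$. So the family collapses at once to $\Sp_6(2)$ on $v=36$ points with $G_\a\cong\S_8$. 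Its only faithful $2$-transitive action has degree $r=8$, and then $35\lambda=8(k-1)$ forces $k\ge 36=v$.

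There are three smaller points.
\begin{itemize}
\item For $\Co_3$, $\McL{:}2$ has rank $3$ on $275$ points and has no $2$-transitive action at all. The case therefore dies immediately, not via $r=v-1$; your stated reason is wrong, though the conclusion stands.
\item For ${}^2\G_2(q)$, the step copied from Lemma~\ref{psu} (``$Q'$ transitive implies $r\mid q$'') fails, because here $|Q'|=q^2$. Instead, use the $G_\a$-invariant chain in $Q$ with three factors of order $q$: the smallest term acting nontrivially on $\calD(\a)$ is transitive, which gives $r\mid q$. Alternatively, argue as the paper does: $G_\a$ is solvable, so the socle of $G_\a^{\calD(\a)}$ is a chief factor of $G_\a$ of order at most $q$.
\item $\M_{12}$ and $\M_{24}$ are not killed by ``pure arithmetic'': $(12,22,11,6,5)$ and $(24,46,23,12,11)$ satisfy all your numerical conditions. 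You need either the computation (neither group has a subgroup of index $b$) or the paper's argument. In the latter, $5$-transitivity makes $\calD$ a quasi-symmetric $4$-design, which contradicts Lemma~\ref{quasi_sym_4design} since $b\ne v(v-1)/2$.
\end{itemize}
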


\begin{proof}
According to \cite[Thm.\,5.3]{as2trans}, either\vs

{\bf (a)} $T$ belongs to one of the following infinite families\,:
\[\mbox{$\A_v$,\, $\Sp_{2m}(2)$,\, $\Sz(q)$,\, $\Ree(q)$\,; or}\]

{\bf (b)} $T$ is one of the sporadic small groups\,:
\[\mbox{$\A_7$,\, $\HS$,\, $\M_{11}$,\, $\M_{12}$,\, $\M_{22}$,\, $\M_{23}$,\, $\M_{24}$,\, $\Co_3$.}\]

We first treat the infinite families.

{\bf(a.1)} Suppose $T = \A_v$. Then $G$ is $k$-homogeneous on $\calP$ and $ \calD $ is trivial.

{\bf(a.2)} Suppose $T = \Sp_{2m}(2)$, $m \geqslant 3$. The point stabilizer is $T_\a = \GO_{2m}^{\pm}(2)$. Note that neither $\rm GO_{2m}^{\pm}(2)$ ($m \geqslant 4$) nor $\rm GO_6^{-}(2)\cong\PSU_4(2).2$ admits a $2$-transitive permutation representation.
The only possible case is $(T, T_\a, v) = (\Sp_6(2), \GO_6^{+}(2), 36)$, where $ \GO_6^{+}(2)\cong\S_8 $. This case is excluded in {\bf(b.2)}.

{\bf(a.3)} Suppose $(T,v)=(\Sz(q),q^2+1)$, $ q=2^{2e+1}>2 $. The point stabilizer is $T_\a=\E_q^{1+1}.\ZZ_{q-1}$. Note that $ G\leq T.\calO $ and $ G_\a\leq T_\a.\calO $, where $ \calO=\Out(T) $ is solvable. Then $G_\a^{\calD(\a)}$ is of affine type with the socle being elementary abelian and isomorphic to a section of $\E_q$ or $\ZZ_{q-1}$. Thus, $r=|\calD(\a)|\leqslant q$. By $\lambda(v-1)=r(k-1)$, we have a contradiction $$k=\frac{\lambda(v-1)}{r}+1=\frac{\lambda q^2}{r}+1 \geqslant\lambda q+1 >r.$$
	
{\bf(a.4)} Suppose  $(T,v)=(\Ree(q),q^3+1)$, $q= 3^{2e+1}>3$. The point stabilizer is $T_\a=(\E_q^{1+1+1}){:}\ZZ_{q-1}$. Similar to {\bf(a.3)}, $G_\a^{\calD(\a)}$ is of affine type, $r=|\calD(\a)|\leqslant q$, and by $\lambda(v-1)=r(k-1)$, we have a contradiction
$k=\frac{\lambda q^3}{r}+1 \geqslant \lambda q^2+1 >r$.\vs\vs

We then treat the small candidates in {\bf(b)}. In the case $T = \Co_3$, the stabilizer $T_\a = \McL{:}2$ has no $2$-transitive permutation representation. The remaining cases are listed as follows, where the degree $v$ can be read off from \cite[Thm.\,5.3]{as2trans}, and further, the point stabilizer $T_\alpha$ and the possible overgroups $G$ are listed in~\cite{ATLAS} and \cite[Table\,7.4]{cameron-book}, respectively. Note that $G_{\a}=T_{\a}$ or $ T_{\a}.2 $ is an almost simple group that acts 2-transitively on the set $ \calD(\a) $ of $ r $ blocks. Again, the possible values of $r$ and the stabilizer $T_{\alpha\b}$ are determined by \cite[Thm.\,5.3]{as2trans} and the tables in~\cite{ATLAS}.

\[\mbox{\small
$\begin{array}{cccccccc} \hline
&T& T_\a & T_{\a\b} & v & r & G &  \\ \hline

1&\A_7 & \PSL_2(7) & \S_4,\ 7{:}3 & 15 & 7,8 & T \\

2&\Sp_6(2) & \S_8 & \S_7 & 36 & 8 & T \\

3&\HS & \PSU(3,5){:}2 & 5_+^{1+2}{:}8{:}2 & 176 & 126 & T \\  \hline

4&\M_{11} &\PSL_2(11) & \A_5, 11{:}5 & 12 & 11,12 &T  \\

5&\M_{11}&\M_{10} \cong \A_6.2 & 3^2{:}\Q_8 & 11 & 10 & T \\	

6&\M_{12} & \M_{11} & \M_{10},\ \PSL_2(11) & 12 & 11,12 &T   \\	

7&\M_{22} & \PSL_3(4) & 2^4{:}\A_5 & 22 & 21 & T,T.2 \\	

8&\M_{23} & \M_{22} & \PSL_3(4) & 23 & 22 & T\\	

9&\M_{24} & \M_{23} & \M_{22} & 24 & 23 & T \\
\hline
\end{array}$}\]\vs

{\bf(b.1)} Suppose $T=\A_7$, Then $ G=T $. If $r=8$, then $14\lambda=8(k-1)$, so $ k\geq 8= r $, not possible. Hence $r=7$, and then $14\lambda=7(k-1)$, so $2\lambda=k-1$. Since $k < r$ and $bk=vr=3 \cdot 5 \cdot 7$, we have $k=3$ or $5$, and, respectively, $\lambda=1$ or $2$. Further, by Lemma \ref{c}, $c=\frac{(k-1)(\lambda-1)}{r-1}+1$ is an integer. Thus, $(k,\lambda)=(3,1)$, and hence $(v,b,r,k,\lambda)=(15,35,7,3,1)$. By $ \MA $, there exists $ K\leqslant T $ such that  \[\mbox{$K\cong(\A_4 \times 3){:}2$, and $T_\a\cap K=\S_4$.}\] By Construction \ref{cons}, the triple $ (T,T_\a,K)=(\A_7,\PSL_2(7),(\A_4\times 3){:}2) $ induces a $ T $-locally 2-transitive design $ \calD $ with above parameters. In particular, $\lambda=1$, so by \cite[Section 2]{Buekenhout2vk1}, we have $\calD=\PG_1(3,2)$. This gives Row 3 of Table \ref{table_as}.\vs

{\bf(b.2)} Suppose $T=\Sp_6(2)$. Then $35\lambda=8(k-1)$, so $k\geqslant 36=v$, not possible.\vs
	
{\bf(b.3)} Suppose $T=\HS$. Then $175\lambda=126(k-1)$, so $25\div k-1$. Since $bk=vr=2^5\cdot 3^2 \cdot 7 \cdot 11$, we have $k= r=126$ and $\calD$ is symmetric, not in our case.\vs

{\bf(b.4)} Suppose $T\in\{\M_{11},\M_{12},\M_{22},\M_{23},\M_{24}\}$. If $r=v$, then $\lambda(v-1)=v(k-1)$, so $v-1 \div k-1$ and $v=k$, not possible. Hence $r=v-1$, and so $\lambda=k-1$. By Lemma \ref{c}, $c=\frac{(k-1)(\lambda-1)}{r-1}+1=\frac{(k-1)(k-2)}{r-1}+1$ is an integer. Thus, $k$ satisfies that
\[3 \leqslant k <r,\ k \div vr,\mbox{\ and\ }r-1 \div (k-1)(k-2).\]
\noindent It follows that the possible cases are\,: $$(T,v,k)\in\{(\M_{11},12,6),\, (\M_{12}, 12,6),\, (\M_{22}, 22,6),\, (\M_{24}, 24,12)\}.$$

{\bf(b.4.1)} Suppose $(T,v,k)=(\M_{11}, 12,6)$. Then $ G=T $, and the parameters $(v,b,r,k,\lambda)=(12,22,11,6,5)$. By $ \MA $, there exists $ K\leqslant T $ such that \[\mbox{$K\cong \A_6$, and $T_\a\cap K=\A_5$.}\] By Construction \ref{cons}, the triple $ (T,T_\a,K)=(\M_{11},\PSL_2(11),\A_6) $ induces a $ T $-locally 2-transitive design $ \calD $ with above parameters. In particular, $b=2v-2$, so by Lemma \ref{quasi_sym_b=2v-2}, $\calD$ is the Hadamard $3$-$(12,6,2)$ design. This gives Row 2 of Table \ref{table_as}.\vs

{\bf(b.4.2)} Suppose $(T,v,k)=(\M_{12}, 12,6)$. Then $(v,b,r,k,\lambda)=(12,22,11,6,5)$.
Note that $\M_{12}$ is 5-transitive on the set $ \calP $ of $ 12 $ points. It follows that $ \calD $ is a quasi-symmetric $4$-design, so by Lemma \ref{quasi_sym_4design}, $b =\frac{1}{2}v(v-1)$, a contradiction.\vs

{\bf(b.4.3)} Suppose $(T,v,k)=(\M_{22}, 22,6)$. Then $(v,b,r,k,\lambda)=(22,77,21,6,5)$. By \cite{ATLAS}, $ G=T.\calO $, where $ \calO=1 $ or $ 2 $, and further, if $ G=T.2 $, then $ G_\a=\PSL_3(4).2\cong\PSigmaL_3(4) $. By $ \MA $, there exists $ K\leqslant G $ such that \[\mbox{$K\cong 2^4{:}\A_6.\calO$, and $G_\a\cap K=2^4{:}\A_5.\calO$.}\] By Construction \ref{cons}, the triple $ (G,G_\a,K)=(\M_{22}.\calO,\PSL_3(4).\calO,2^4{:}\A_6.\calO) $ induces a $ G $-locally 2-transitive design $ \calD $ with above parameters. Since $\M_{22}$ is 3-transitive on the set $ \calP $ of $ 22 $ points, we have $\calD$ is a flag-transitive $3$-design. In particular, $\lambda_3 {22-2 \choose 3-2}/{6-2 \choose 3-2}=\lambda_2=5$, so $ \lambda_3=1 $. 
Such designs, 
were studied in \cite[Thm.\,3]{Homogeneous_designs}, by which $\calD$ is the 
Mathieu-Witt $3$-$(22,6,1)$ design. 
This gives Row 5 of Table \ref{table_as}.\vs

{\bf(b.4.4)} Suppose $(T,v,k)=(\M_{24}, 24,12)$. Then $(v,b,r,k,\lambda)=(24,46,23,12,11)$.
Note that $\M_{24}$ is $5$-transitive on the set $ \calP $ of $ 24 $ points. It follows that $ \calD $ is a quasi-symmetric $4$-design, so by Lemma \ref{quasi_sym_4design}, $b =\frac{1}{2}v(v-1)$, a contradiction.
\end{proof}

We conclude with the \noindent{\bf Proof of Theorem \ref{thm:2-trans-Design}\,:}

Let $\calD=(\calP,\calB,\calI)$ be a $ G $-locally $ 2 $-homogeneous design. By Lemma \ref{loc-2-trans}, the group $ G $ is 2-homogeneous on the set $ \calP $ of points. Then, by Wagner-Kantor's Lemma, there are the following three cases to consider\,:\vskip0.02in
{\bf(A)} $G$ is $2$-transitive on $ \calP $ and $ \calD $ is symmetric. This is treated in Section \ref{sec-Kantor} (Proposition  \ref{sym}). It yields parts {\bf(1.1)}, {\bf(1.2)}, as well as the cases in Table \ref{table_sym}.\vskip0.02in

{\bf(B)} $G$ is $2$-homogeneous on $ \calP $ of affine type, and either $G\leqslant \AGammaL_1(v)$\,; or $G\not\leqslant \AGammaL_1(v)$, $ \calD $ is non-symmetric. This is treated in Section \ref{sec-affine} (Proposition \ref{lem:1-dim} and Proposition \ref{Affine}). The former yields Row 3 of Table~\ref{table_nsym} and Row 1, 2 of Table~\ref{table_not2trans}. The latter yields part {\bf(1.3)} and Row 4 of Table~\ref{table_nsym}.\vskip0.02in

{\bf(C)} $G$ is $2$-transitive on $ \calP $ of almost simple type and $ \calD $ is non-symmetric. This is treated in Section \ref{sec-as} (Proposition \ref{AS}). It yields parts {\bf(1.4)}, {\bf(1.5)}, as well as Row 3 of Table~\ref{table_not2trans} and Row 1, 2, 5, 6 of Table~\ref{table_nsym}.\vskip0.02in

At last, we note that all the sporadic cases in Tables \ref{table_sym}\,-\,\ref{table_not2trans} hold by $ \MA $. \qed
	
\bibliographystyle{siam}
\bibliography{ref}

\section*{Declarations}
The author(s) declares that there is no any financial/personal relationship with other people/organizations not mentioned that can inappropriately influence the work.

\end{document}